\newtheorem{theorem}{Theorem}
\newtheorem{corollary}[theorem]{Corollary}
\newtheorem{example}[theorem]{Example}
\newtheorem{lemma}[theorem]{Lemma}
\newtheorem{proposition}[theorem]{Proposition}
\newtheorem{remark}[theorem]{Remark}
\newcommand{\Q}{\mathbb Q}
\newcommand{\C}{\mathbb C}
\newcommand{\HH}{\mathbb H}
\newcommand{\Hom}{\mathrm{Hom}}
\newcommand{\aut}{\mathrm{Aut}}
\newcommand{\com}{\mathrm{Comm}}
\newcommand{\Id}{\mathrm{Id}}
\newcommand{\sym}{\mathrm{Symm}}
\newcommand{\Gal}{\mathrm{Gal}}
\title{  Galois action on   Fuchsian surface  groups and their  solenoids
 \thanks{  Second author partially supported by Spanish Goverment Research Project MTM2016-79497-P 
\newline AMS Subject Classification: 14H30, 20H10, 30F10, 11F06, 22D99
\newline Key words: Algebraic curves, Arithmetic Fuchsian groups,  Commensurators, Galois invariants, Group completions, Riemann surface solenoids. }}
\author{  Amir D\v{z}ambi\'c \\
Christian-Albrechts-Universit\"{a}t zu Kiel \\
\emph{ dzambic@math.uni-kiel.de } \\ 
\\  
   Gabino Gonz\'alez-Diez  \\ 
 Universidad Aut\'onoma de Madrid \\
\emph{   gabino.gonzalez@uam.es} }
\date{}
\begin{document}

\maketitle

 \begin{abstract}
  Let $C$ be a  complex algebraic curve  uniformised by a Fuchsian group $\Gamma$.
In the first part of this paper
 we identify the automorphism group of the solenoid associated with $\Gamma$ with the
	 Belyaev completion of  its commensurator  $\com(\Gamma)$ and we use this identification to show that the isomorphism class of this completion is an invariant of the natural Galois action of $Gal(\C/\Q)$ on algebraic curves.  In turn this fact yields a proof of the Galois invariance  of the arithmeticity of $\Gamma$ independent of Kazhhdan's.

	In the second part  we focus on  the case in which $\Gamma$ is arithmetic.
 The list of  further Galois invariants we find includes: i)  
 the periods of $\com(\Gamma)$, ii) the  solvability of the  equations 
	$X^2+\sin^2 \frac{2\pi}{2k+1}$ 
	in the invariant quaternion algebra of $\Gamma$ and iii) the property of   $\Gamma$ being a   congruence 
subgroup.

 \end{abstract}

\section{Introduction and statement of results}
  We recall that 
  two subgroups $H_1$ and $H_2$   of a group $G$ are said to be \textit{commensurable}
if $H_1 \cap H_2$ has finite index in both $H_1$ and $H_2$ and that 
the  \emph{commensurator} of a subgroup  $H<G$ is the subgroup 
$\com(H)$ consisting of the elements $g\in G$ such that $H$ and $g H g^{-1}$ are commensurable.

Let $C$ be a compact Riemann surface (or, equivalently, a nonsingular  complex projective curve)  of genus $g\geq 2$ and let
 $ \Gamma\cong \pi_1(C)$  be the Fuchsian surface group uniformising $C.$  Here we  deal with 
the  commensurator of $\Gamma$  in  $PSL_2(\mathbb{R})$, namely
$$
\com(\Gamma) = \{\alpha \in PSL_2(\mathbb{R}): \alpha \Gamma \alpha^{-1}\cap \Gamma \text{ has finite index in both } \Gamma \text{ and }  \alpha \Gamma \alpha^{-1}\}
$$
We observe that the group $\Gamma$
 is determined by $C$ only up to conjugation in  $PSL_2(\mathbb{R})$ and that commensurable groups have the same commensurator.  Another  object which  depends only on the  commensurability class of $\Gamma$ is the  Sullivan \emph{solenoid}
 $\mathcal{H}_{C}$ (or $\mathcal{H}_{\Gamma}$) associated with $C$ (or with  $\Gamma$).
 These two objects will play a central role in this paper. They are related by the fact  that the 
 automorphism group of the solenoid, let us denote it  $\aut(\mathcal{H}_{C})$ (or $\aut(\mathcal{H}_{\Gamma})$),   is generated by  $\com(\Gamma)$
and $\widehat{\Gamma}$, the profinite completion of $\Gamma.$

Let $Gal( \C/\Q)$  denote the group of field automorphisms of $\C$, and let
 $\sigma \in Gal( \C/\Q)$.
 The obvious (Galois) action of $Gal( \C/\Q)$ on the coefficients of the defining equations
  transforms a complex algebraic variety $X$ 
	 defined over a subfield $k\subset \C$
into another algebraic variety $X^{\sigma}$ defined over the subfield $k^{\sigma}:=\sigma(k)$.
	
	Finding invariants for this action is an important problem in  Complex Algebraic Geometry. For instance, the Betti numbers and the profinite completion of the fundamental group are Galois invariants but, in dimension $\geq 2$,  the fundamental group itself and the universal cover are not (see e.g. \cite{Se}, \cite{milnesuh}, \cite{Ca}, \cite{GJ}  \cite{GJT}, \cite{GR1}, \cite{GR2} and \cite{Go1}). Such phenomena do not occur in dimension one 
since in this case the Galois action is genus preserving, but Grothendieck's theory of \emph{dessins d'enfants} and algebraic curves share similar ideas and goals (\cite{Gro}, see also \cite{GG} and \cite{JW}).
 
	 Let  $\sigma \in Gal( \C/\Q)$ and let
	$\Gamma^{\sigma}$ denote the Fuchsian group
  uniformising the conjugate algebraic curve 
	$C^{\sigma}$. 
	 We shall refer to $\Gamma^{\sigma}$ as the Galois conjugate of $\Gamma$ by $\sigma$. We observe that the rule 
	$(\sigma, \Gamma) \to \Gamma^{\sigma}$ defines an action of  $Gal( \C/\Q)$ 
	on the set of $PSL_2(\mathbb{R})$-conjugacy classes of Fuchsian surface groups.
	As it has been said, in   this ($1$-dimensional) case
  $\Gamma$  and  $\Gamma^{\sigma}$    must be isomorphic as abstract groups. However, little seems to be known about the relationship between   $\Gamma$  and $\Gamma^{\sigma}$ as subgroups of $PSL_2(\mathbb{R})$. Note that  $\com(\Gamma)$ depends precisely on the way 
      $\Gamma$ sits inside $PSL_2(\mathbb{R})$.

		In the first part of this article we study
	the group $\aut(\mathcal{H}_{\Gamma})$; we  show  it to be the Belyaev completion of $\com(\Gamma)$ (see the definition in \ref{Belyaev}) and  derive some properties  which remain invariant under Galois action. In the second part we also look for Galois invariants  but we use mainly tools pertaining to the theory of arithmetic groups.

	\subsection{The automorphism group of a solenoid  $\aut(\mathcal{H}_{C})$.}
	
	The explicit description of the automorphisms of $\mathcal{H}_{C}$ was made by Odden in \cite{Odd},   where he showed that  any element $F\in \aut(\mathcal{H}_{C})$ can be written as a product $F=\alpha \tau$, with $\alpha \in \com(\Gamma)$ and $\tau \in \widehat{\Gamma}$ essentially in a unique way.
	 However the author remarks that \textit{``(this) theorem 
does not shed light on its group structure"}.  

We will consider the completion  of $ \com(\Gamma)$   relative to the topology determined by the finite index subgroups of  $\Gamma$  and, 
following Belyaev \cite{Be}, we will make it into  a totally disconected locally compact 
	group which we will denote   $\overline{\com(\Gamma)}$. 
	 	Then  we will  prove (Theorem \ref{Isom}) 
  \begin{itemize}
  \item   $ \aut(\mathcal{H}_{C}) \cong  \overline{\com(\Gamma)}$  \hspace{1.6cm} (isomorphism of  groups)
   \end{itemize}

	Then we will show that that there is a 
   natural action of    $Gal( \C/\Q)$ on $ \mathcal{H}_{C}$ which, for any $\sigma \in Gal( \C/\Q)$,  induces the following  isomorphisms (Theorem \ref{mainth}):

	\begin{itemize}
	 \item \  \  \  $\overline{\com(\Gamma)} \simeq \overline{\com(\Gamma^{\sigma})} \hspace{1.5cm} \text{ (isomorphism of   topological groups}) $ 
\item \ \ \  $ \dfrac{ \com(\Gamma)}{\Gamma} \simeq \dfrac{ \com(\Gamma^{\sigma})}{\Gamma^{\sigma}} \hspace{1.3cm} \text{ (isomorphism of sets of co-sets)} $	
	\end{itemize}	
		 In view of a well-known theorem of Margulis \cite{Mar}, which states that $\Gamma$ is arithmetic if and only if $[ \com(\Gamma):  \Gamma]=\infty$, 
		the second of these isomorphisms provides an alternative proof of the following result which  Kazhhdan (\cite{Kaz}) proved in arbitrary dimension.

\begin{itemize}
  \item  $C$ is uniformised by an arithmetic group if and only if $ C^{\sigma}$ is.
  (Corollary \ref{kaz}).
   \end{itemize}
				 
In the particular (but generic) case in which  $\Gamma$ is non-arithmetic 
the  isomorphism class of the  group  $\com(\Gamma)$ is also   a Galois invariant; more precisely we will prove the following result (Theorem \ref{ThNonArit})
		\begin{itemize}
	 \item Suppose that  $\Gamma$ is non-arithmetic, then 
	  $\com(\Gamma) \simeq \com(\Gamma^{\sigma}) .$
	              
	\end{itemize} 
 The second part of the article will be devoted to the arithmetic case.
 
	\subsection{The arithmetic case}
	 The main tool to understand the relation between $\Gamma$ and $\Gamma^{\sigma}$ in the arithmetic case is a theorem by Doi-Naganuma. This result establishes the relation between the quaternion algebra  $(k\Gamma, A\Gamma)$ associated with  $\Gamma$  and the quaternion algebra 
 $(k\Gamma^{\sigma}, A\Gamma^{\sigma})$ associated with the Galois conjugate group  $\Gamma^{\sigma}$ 
	(see Theorem \ref{mainI} for the precise statement and \ref{arithmetic} as well as \ref{quatalgnf} for main notions from the theory of arithmetic Fuchsian groups and quaternion algebras). For instance this theorem states that $k\Gamma^{\sigma}=(k\Gamma)^{\sigma}$, hence    if $\Gamma$ and $\Gamma'$ are two arithmetic Fuchsian groups whose invariant trace fields are not Galois conjugate, then the curves $C=\mathbb H/\Gamma$ and $C'=\mathbb H/\Gamma'$ cannot be Galois conjugate.
 
In this paper we will  use this theorem to study the behaviour of the torsion of the group  $\com(\Gamma)$ under Galois action. To be more precise, 
	  let $\mathcal{P}(\Gamma)\subset \mathbb{N}$ denote the set of orders (or periods)  of the finite order  elements of   $\com(\Gamma)$. Then we will show that (Theorem \ref{periods}).
	\begin{itemize}		
	\item	$\mathcal{P}(\Gamma) = 
	\mathcal{P}(\Gamma^{\sigma})$, for any $\sigma \in Gal( \C/\Q)$.	
		\end{itemize}	
		In other words the set of periods of  $\com(\Gamma)$ is  another Galois invariant
		 which could tell apart   surface groups uniformising curves in different Galois orbits.


	C. Maclachlan \cite{Mac1} has described the set $\mathcal{P}(\Gamma)\subset \mathbb{N}$ in terms of     the   invariant quaternion algebra 
		of $\Gamma,$  namely 
		$$\mathcal{P}(\Gamma)=\{m\in \mathbb{N}:
		\cos \frac{2\pi}{m}\in k\Gamma \text{ and the field } k\Gamma(e^{2\pi i/m}) \text{  embeds in }   A\Gamma\}
		$$ 		
	
	Using this  one can deduce, for instance, that  the subset $\mathcal{P}^{odd}(\Gamma)$ of odd periods admits the following simpler description 
(Proposition \ref{InvarAlgebr}).
		\begin{itemize}
	 \item $\mathcal{P}^{odd}(\Gamma)=\{m\in \mathbb{N}:
		 m \text{ odd and } A\Gamma \text{ contains
a square root of } -\sin^2 \frac{2\pi}{m} \}
		$ 
Hence the solvability of the quadratic equations 
	$X^2+\sin^2 \frac{2\pi}{2k+1}, \ \  k \in \mathbb{N}$ 
	is also Galois invariant.
		\end{itemize}
	Using this last description of the periods it will not be difficult to 	
	 construct  an explicit family of arithmetic groups $\Gamma_p, p \text{ an odd prime, }$ 
 enjoying the property that $\com(\Gamma_p)$ contains an element of odd prime order $q$ if and only if $p=q$ (Example \ref{Noncommens}).

Since algebraic curves uniformised by arithmetic groups are defined over number fields (Proposition \ref{DefinedOverNumberField}), in the arithmetic case   the  invariants of the action of $Gal(\mathbb{C}/\mathbb{Q})$ can be seen as 
invariants of the action of the more interesting group
$Gal(\overline{\mathbb{Q}}/\mathbb{Q})$, 
\emph{the absolute Galois group}. We will prove the  following result
	(Theorem \ref{AbsGalGroup})
	\begin{itemize}
	 \item   $Gal(\overline{\mathbb{Q}}/\mathbb{Q})$ acts faithfully on 
	the set of $PSL_2(\mathbb{R})$-conjugacy classes of Fuchsian surface groups. Moreover, 
	this action possesses the following invariants:
	 \begin{enumerate}
	\item The isomorphism class of the group $\overline{\com(\Gamma)}$. 
	\item The set $\mathcal{P}(\Gamma)$ of periods of 
	$\com(\Gamma).$ 
	\item The solvability of the quadratic equations 
	$X^2+\sin^2 \frac{2\pi}{2k+1}, \ \  k \in \mathbb{N}$ 
	\newline
	in the invariant quaternion algebra $A\Gamma$.
	\item  The Galois conjugacy class of the field $k\Gamma$. (In fact $k\Gamma^{\sigma}=(k\Gamma)^{\sigma}$ for any $\sigma \in Gal(\overline{\mathbb{Q}}/\mathbb{Q})$).
	\item  The property of being a conguence subgroup.
  \end{enumerate}
 \end{itemize}
  
\section{ Preliminaries} \label{solenoids}

\subsection{ Arithmetic Fuchsian groups} \label{arithmetic}

Here we give a brief summary of  some aspects  of the theory of arithmetic Fuchsian groups   
relevant to this paper. For details we refer to  
\cite{MR}, chapter 8.2  (but see also the nice survey article \cite{Mac2}). 
 
Let $k$ be a commutative field of characteristic $\neq 2$ and let $a,b\in k^{\ast}$, the group of units in $k$. We consider a \textit{quaternion algebra} $A$, also denoted by $(k,A)$, over $k$ identified with \textit{Hilbert symbol} $A=\big(\dfrac{a,b}{k}\big)$. This is the $k$-algebra which as a vector space over $k$ has a basis  
  $\{\vec{1},\vec{i}, \vec{j}, \vec{k} \}$ such that its   ring structure is determined by the relations $\vec{1}=1$,
 $\vec{i}^2=a$, 
$\vec{j}^2=b$ and $\vec{k}=\vec{i}\vec{j}=-\vec{j}\vec{i}$. For instance,   $\big(\dfrac{-1,-1}{\mathbb{R}}\big)$ is the classical Hamilton quaternion field $\mathbf{H}$ whereas $\big(\dfrac{1,1}{\mathbb{R}}\big)=M_2(\mathbb{R})$    \Big(with basis $ \vec{1}=  \left( \begin{array}{cc}
1 & 0  \\
0 & 1  \end{array}\right)$,  $ \vec{i}= \left( \begin{array}{cc}
1 & 0 \\
0 & -1 \end{array} \right)$,   $ \vec{j}= \left( \begin{array}{cc}
0 & 1  \\
1 & 0  \end{array} \right)$,
$ \vec{k}= \left( \begin{array}{cc}
0 & 1  \\
-1 & 0  \end{array} \right)$ \Big). 
In fact these are the only two quaternion algebras over the field of real numbers up to isomorhism, for it is known that
$\big(\dfrac{a,b}{\mathbb{R}}\big)$ equals  $\mathbf{H}$  if $a$ and $b$ are simultaneously negative and  equals $M_2(\mathbb{R})$ otherwise.
 The \textit{norm} of an element $x=x_0 +x_1\vec{i}+x_2\vec{j}+x_3\vec{k} \in A=\big(\dfrac{a,b}{k}\big)$ is defined to be 
$n_A(x)=x_0^2-ax_1^2-bx_2^2+abx_3^2$; for instance, when $A=M_2(\mathbb{R})$, $n_A(x)=det(x).$
 
\

From now on we assume that $k$ is a number field, that is a finite extension of $\mathbb{Q}.$   Let us denote by $R_k$ the ring of integers of    $k.$ 
An \textit{order}  
in $A$ is a subring $\mathcal{O}\subset A$ with $1$ which is a finitely generated 
$R_k$-module satisfying $\mathcal{O} \otimes_{R_k}k=A.$ Given an order  $\mathcal{O}\subset A$ one may consider the group 
$$\mathcal{O}^1=\{x\in \mathcal{O}:n_A(x)=1\}. $$
  We will call this group the \textit{norm-1-group of $\mathcal{O}$}. For instance, if $A= \big(\dfrac{1,1}{\mathbb{Q}}\big)\cong M_2(\mathbb{Q})$, then $\mathcal{O}= M_2(\mathbb{Z})$ is an order such that $\mathcal{O}^1\cong SL_2(\mathbb{Z})$. \newline
If $\tau:k \to  \mathbb{C}$  is a field  embedding  we will denote by $k^{\tau}$ the image of $k$  in $\mathbb{C}$ and by $A^{\tau}$ the quaternion algebra
$A^{\tau}:=\left(\dfrac{\tau(a),\tau(b)}{k^{\tau}}\right).$

 Let us now assume that $A=\big(\dfrac{a,b}{k}\big)$ satisfies the following conditions:
\begin{enumerate}
\item $k$ is a \textit{totally real number field} (which means that  the image $k^{\tau}$ of any embedding $\tau:k \to  \mathbb{C}$ lies in 
$ \mathbb{R}$)
\item There is an isomorphism $\rho:A\otimes_k \mathbb{R}=\big(\dfrac{a,b}{\mathbb{R}}\big) \xrightarrow{\sim} M_2(\mathbb{R})$.  One then says that $A$ is \textit{unramified} at the infinite place corresponding to identity embedding $id$.
\item $A^{\tau}\otimes_{k^{\tau}} \mathbb{R}:=\big(\dfrac{\tau(a),\tau(b)}{\mathbb{R}}\big) \cong \mathbf{H}$, for every $\tau$ different from the inclusion map. In this case one says that $A$ is \textit{ramified} at the infinite places $\tau\neq id$ (see \ref{quatalgnf}).
\end{enumerate}
In that situation a deep theorem by Borel and Harish-Chandra ensures that if $G<PSL_2(\mathbb{R})$ is a subgroup  commensurable to a group of the form $P( \rho(\mathcal{O}^1))$, where 
$\mathcal{O}$ is an order of $A$ and 
$P:SL_2(\mathbb{R}) \to PSL_2(\mathbb{R})$
stands for the canonical projection, then, 
$G$ is a \textit{finite volume} Fuchsian group. Groups arising in this way (possibly after conjugation in  $PSL_2(\mathbb{R}))$
are called \textit{arithmetic (Fuchsian) groups}  
(associated with the quaternion algebra $(k,A)$).   Note that the condition that $A$ is unramified at the identity and ramified at all embeddings different from the identity is a choice that is not essential for the validity of Borel-Harish-Chandra theorem. It is rather that any quaternion algebra $(k,A)$ with $k$ totally real and the property that there exists exactly one embedding $\tau_0$ such that 
$A^{\tau_0}  \otimes_{k^{\tau_0}} \mathbb{R}  \cong M_2(\mathbb R)$ and 
 $A^{\tau}  \otimes_{k^{\tau}} \mathbb{R}\cong \mathbf H$ for all embeddings $\tau\neq \tau_0$ will give rise to Fuchsian groups. The condition is often expressed as
\begin{equation}\label{tensor}
A\otimes_{\mathbb Q} \mathbb R\cong M_2(\mathbb R)\times \mathbf H^{[k:\mathbb Q]-1},
\end{equation}
where $[k:\mathbb Q]$ denotes the degree of the number field $k$.
 It is known that an arithmetic group  associated with $(k,A)$ fails to be \textit{co-compact} only when $(k,A)\cong(\mathbb{Q},M_2(\mathbb{Q} )).$

	The quaternion algebra $(k,A)$ with which a given arithmetic Fuchsian group $\Gamma$ is associated can be recovered as follows:
  
$$k= k\Gamma = \mathbb{Q } \left(trace(\gamma):  P(\gamma) \in \Gamma^{(2)}\right),$$ where $\Gamma^{(2)}$ is the group generated by all squares of elements of $\Gamma$, and
 
$$A= A\Gamma = \{ \ \textstyle \sum a_i \gamma_i: a_i\in  k\Gamma,  P(\gamma_i)\in \Gamma^{(2)} \  \}$$
 
Since the arithmetic group associated with a given quaternion algebra is only defined up to commensurability equivalence one finds that two arithmetic Fuchsian groups  $\Gamma_1$  and $\Gamma_2$  are commensurable if and only if 
$(k\Gamma_1, A\Gamma_1) = (k\Gamma_2, A\Gamma_2)$. Accordingly 
 $k\Gamma$ and  $A\Gamma$ are referred to as the
 \textit{invariant trace field} and the \textit{invariant quaternion algebra} respectively.

The commensurator of an arithmetic Fuchsian group $\Gamma$ can be obtained from its invariant quaternion algebra as 
$$
\com(\Gamma)= A^+/k^{\ast}\cong \widetilde{P}(A^+), 
$$
 where   $ A^+=  \{X\in A\Gamma: =n(X^{\tau})>0\ \text{for all}\ \tau\in \Hom(k,\mathbb R) \} $  is the group of all elements in $A$ with totally positive norm   and 
$\widetilde{P}$ stands for 
the homomorphism obtained by first embedding $A$ into $M_2(\mathbb R)$ via the identity embedding, then dividing each matrix $X$ by the square root of its determinant to get an element of $SL_2(\mathbb{R})$ and finally applying the projection $P:SL_2(\mathbb{R}) \to PSL_2(\mathbb{R}).$ 
 
We end this section by stating one of the most important theorems in the theory of arithmetic Fuchsian groups:

\

\textbf{Margulis' theorem:} For a finite volume  Fuchsian group $\Gamma$ the following three conditions are equivalent.
\begin{enumerate}
\item $\Gamma$ is arithmetic.
\item $\Gamma$ has infinite index in $\com(\Gamma).$
\item $\com(\Gamma)$ is a dense subgroup of $PSL_2(\mathbb{R})$ in the usual matrix topology.
\end{enumerate}

\textbf{Warning:} Given the invariant quaternion algebra $A\Gamma=\left(\dfrac{a,b}{k}\right)$ of the group $\Gamma$ uniformising an algebraic curve $C$  and an element $\sigma \in Gal( \C/\Q)$ one should 
 not confuse $A\Gamma^{\sigma}$
 with $(A\Gamma)^{\sigma}$. The first one is the invariant quaternion algebra of the Fuchsian group uniformising the algebraic curve $C^{\sigma}$ whereas the latter refers to the quaternion algebra  $\left(\dfrac{\sigma(a),\sigma(b)}{k^{\sigma}}\right).$ In general these two algebras are different, see Theorem \ref{mainI}.

\subsubsection{ An explicit example} \label{example}
We now  construct an arithmetic group derived from a quaternion algebra  over the real field $k=\mathbb{Q}(\sin \frac{2\pi}{p})$, where $p\geq 5$ is  a prime number. This example will be revisited later in the paper. We shall start with a simple observation relative to this field $k$.

\begin{lemma} \label{lemma1}
$k=\mathbb{Q}(\sin \frac{2\pi}{p})=\mathbb{Q}(\cos \frac{\pi}{2p}).$ In particular $k$ is a totally real field.
\end{lemma}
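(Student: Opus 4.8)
The plan is to place both generators inside a single cyclotomic field and then recognize that $\sin\frac{2\pi}{p}$ is a \emph{Galois conjugate} of $\cos\frac{\pi}{2p}$. Concretely, set $\zeta=\zeta_{4p}=e^{i\pi/2p}$ and $L=\mathbb{Q}(\zeta)$, an abelian extension of $\mathbb{Q}$ with $[L:\mathbb{Q}]=\varphi(4p)=2(p-1)$. First I would record the elementary identities
\[
\cos\tfrac{\pi}{2p}=\tfrac12\bigl(\zeta+\zeta^{-1}\bigr),\qquad
\sin\tfrac{2\pi}{p}=\cos\bigl(\tfrac{\pi}{2}-\tfrac{2\pi}{p}\bigr)=\cos\tfrac{(p-4)\pi}{2p}=\tfrac12\bigl(\zeta^{\,p-4}+\zeta^{-(p-4)}\bigr),
\]
so that both numbers lie in $L$; moreover $\mathbb{Q}(\cos\frac{\pi}{2p})$ is exactly the maximal real subfield $L^{+}$ of $L$, of degree $\varphi(4p)/2=p-1$ over $\mathbb{Q}$.

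Next comes the key arithmetic observation: $\gcd(p-4,4p)=1$. Indeed $p\ge 5$ is an odd prime, so $\gcd(p-4,p)=\gcd(4,p)=1$, while $p-4$ is odd and hence coprime to $4$. Therefore $p-4$ represents a unit of $\mathbb{Z}/4p\mathbb{Z}$, and there is an automorphism $\sigma\in\Gal(L/\mathbb{Q})$ with $\sigma(\zeta)=\zeta^{\,p-4}$; by the displayed formulas, $\sigma\bigl(\cos\frac{\pi}{2p}\bigr)=\sin\frac{2\pi}{p}$. Since $\Gal(L/\mathbb{Q})$ is abelian, complex conjugation generates a normal subgroup, so its fixed field $L^{+}=\mathbb{Q}(\cos\frac{\pi}{2p})$ is itself Galois over $\mathbb{Q}$ and is thus stable under $\sigma$. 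Restricting $\sigma$ to $L^{+}$ gives a $\mathbb{Q}$-automorphism of $\mathbb{Q}(\cos\frac{\pi}{2p})$ carrying the generator $\cos\frac{\pi}{2p}$ to $\sin\frac{2\pi}{p}$; hence $\sin\frac{2\pi}{p}$ generates the same field and $\mathbb{Q}(\sin\frac{2\pi}{p})=\mathbb{Q}(\cos\frac{\pi}{2p})$. Finally, since $k=L^{+}\subset\mathbb{R}$ is Galois over $\mathbb{Q}$, every embedding $k\hookrightarrow\mathbb{C}$ is an automorphism of $k$ with image in $\mathbb{R}$, so $k$ is totally real.

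The only genuinely delicate step is the coprimality $\gcd(p-4,4p)=1$ coupled with the realization that $\sin\frac{2\pi}{p}$ is a conjugate of, rather than merely an element of, the real cyclotomic field $\mathbb{Q}(\cos\frac{\pi}{2p})$; after that the argument is routine. Should one prefer to avoid conjugates entirely, an alternative is a degree count: the formulas of the first paragraph already give $\mathbb{Q}(\sin\frac{2\pi}{p})\subseteq\mathbb{Q}(\cos\frac{\pi}{2p})$, and using the tower $\mathbb{Q}\subset\mathbb{Q}(\sin^{2}\frac{2\pi}{p})=\mathbb{Q}(\cos\frac{4\pi}{p})=\mathbb{Q}(\zeta_{p})^{+}\subset\mathbb{Q}(\sin\frac{2\pi}{p})$ one checks $[\mathbb{Q}(\sin\frac{2\pi}{p}):\mathbb{Q}]=p-1$, the top step having degree $2$ because $\sin\frac{2\pi}{p}=\frac{\zeta_{p}-\zeta_{p}^{-1}}{2i}\notin\mathbb{Q}(\zeta_{p})$ (as $i\notin\mathbb{Q}(\zeta_{p})$, the unique quadratic subfield of $\mathbb{Q}(\zeta_{p})$ being $\mathbb{Q}(\sqrt{(-1)^{(p-1)/2}p})$). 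Since $[\mathbb{Q}(\cos\frac{\pi}{2p}):\mathbb{Q}]=p-1$ as well, equality follows, and total reality is argued as above.
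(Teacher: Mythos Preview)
Your proof is correct and follows essentially the same route as the paper: you write $\sin\tfrac{2\pi}{p}=\cos\tfrac{(p-4)\pi}{2p}$ whereas the paper writes $\sin\tfrac{2\pi}{p}=-\cos\tfrac{(p+4)\pi}{2p}$, and in both cases the key point is that the relevant integer ($p-4$, respectively $p+4$) is coprime to $4p$, so that $\sin\tfrac{2\pi}{p}$ is a Galois conjugate of $\cos\tfrac{\pi}{2p}$ inside $\mathbb{Q}(\zeta_{4p})$ and hence generates the same (maximal real) subfield. Your write-up is considerably more detailed than the paper's, and the alternative degree-count argument you sketch at the end is a nice bonus, but the main argument is the same idea.
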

\begin{proof}
The identity  $\sin \frac{2\pi}{p}=-\cos\frac{(4+p)\pi}{2p}  $  shows that $k$ is the  subfield of the
\\[0.1cm]
cyclotomic field  $\mathbb{Q}(e^{(4+p)2\pi i/4p})=\mathbb{Q}(e^{2\pi i/4p})$ fixed by the complex conjugation, where the last equality holds because $4+p$ and $4p$ are co-prime. The result follows.
\end{proof}

With $k$ as above, our quaternion algebra is going to be

 $$A=\left(\dfrac{-1, \ b_p}{k}\right)
\text{ \  where \  \  } b_p=\cos (\frac{2\pi}{p})-1+\frac{32}{p^2}$$
 (instead of $1-\frac{32}{p^2}$ one can take  any other rational number lying between
 $\cos \frac{4\pi}{p}$  and $\cos \frac{2\pi}{p}$).

Let us check that $A$ satisfies the three  conditions required to apply the Borel--Harish-Chandra theorem. 

1. By Lemma \ref{lemma1} the field  $k$ is  totally real. 

2. Indeed  $A\otimes_k \mathbb{R}\cong M_2(\mathbb{R}),$ via the isomorphism that sends 
$\vec{1},\vec{i}, \vec{j}$  and $\vec{k} $ to

\

$  \left( \begin{array}{cc}
1 & 0  \\
0 & 1  \end{array}\right),   
 \left( \begin{array}{cc}
0 & 1 \\
-1 & 0 \end{array} \right),   
 \left( \begin{array}{cc}
\sqrt{b_p} & 0 \\
0 & -\sqrt{b_p}  \end{array} \right) \text{ \ and  \ }                            
 \left( \begin{array}{cc}
  0 & -\sqrt{b_p} \\
-\sqrt{b_p} & 0 \end{array} \right).$ 

 \

3. For any   $\tau:k \to \mathbb{ \mathbb{C}}$ different from the inclusion map we have 
$$b^{\tau}_p =\tau \left(\cos \frac{2\pi}{p}-1+\frac{32}{p^2}\right)=\cos \frac{2l\pi}{p}-1+\frac{32}{p^2}; \text{ for some } 2\leq l \leq (p-1)/2 $$
which is a negative real number, and therefore  
$A^{\tau}\otimes_k \mathbb{R}=\left(\dfrac{-1,b^{\tau}_p}{\mathbb{R}}\right) \cong \mathbf{H}.$

\

Now, the fact that the ring of integers of \ 
$k=\mathbb{Q}(\cos \frac{\pi}{2p})$ is  
$R_k= \mathbb{Z}[2\cos \frac{\pi}{2p}]$ (see e.g. \cite{Wa}, Proposition 2.16) allows us to write down an obvious explicit order of $A$, namely 
$$\begin{array}{ll}
\mathcal{O} & = R_k + R_k\vec{i}+R_k\vec{j}+R_k\vec{k}  \\ 
& \\
& \stackrel{\rho}\cong  \left\{ X= \left( \begin{array}{cc}
 a_1+a_3 \sqrt{b_p}& a_2-a_4 \sqrt{b_p} \\
& \\
-a_2-a_4 \sqrt{b_p} & a_1-a_3 \sqrt{b_p}  \end{array} \right): a_i \in \mathbb{Z}[2\cos \frac{\pi}{2p}] \right\}
\end{array}$$
According to what has been said above, for every prime number $p\geq 5$, the group
\begin{equation} \label{Explicitgroup}
\Gamma_p=P(\mathcal{O}^1)=\left\{  X \in \mathcal{O} : (a^2_1-a^2_3b_p)+(a^2_2-a^2_4b_p)=1 \right\}
\end{equation}
is going to  be a co-compact arithmetic Fuchsian group. Moreover, by Selberg's lemma  suitable finite index  subgroups of $\Gamma_p$ will be surface groups.


\subsection{ Solenoids} \label{solenoids}

Let  $(C,p)$ be a  pointed algebraic curve, or, equivalently, a pointed compact Riemann surface, of genus greater than one. We shall denote by   $\Lambda$  the collection  of all pointed unramified covers of 
$(C,p)$ so that $\lambda \in \Lambda$ stands for a triple $(C_{\lambda}, p_{\lambda}; f_{\lambda})$
where $f_{\lambda}:C_{\lambda}\to C$ is an unramified cover with $ f_{\lambda}(p_{\lambda})=p.$ Endowed with the partial order defined by $\lambda \leq \mu$ if there is an unramified cover
 $f_{\mu\lambda}:C_{\mu}\to C_{\lambda}$  
such that  $ f_{\mu, \lambda}(p_{\mu})=p_{\lambda}$ and 
$f_{\lambda}\circ f_{\mu\lambda}=f_{\mu},$ the set $\Lambda$ becomes a directed set.  The maps $f_{\mu\lambda}$ are sometimes called the \emph{bonding functions}. If we  denote by $o$
the element of $\Lambda$ representing the triple $(C, p; id.)$ then $ f_{\mu o}=f_{\mu}.$

 The family 
\begin{equation} \label{defsolenoid}
 \{ (C_{\mu},f_{\mu\lambda})\}_{ \lambda \leq \mu \in \Lambda}
 \end{equation}
forms an \emph{inverse system } of coverings of $C.$
By the \emph{solenoid associated  with}   $C,$ which we will denote by  
$
 \mathcal{H}_{C} = \mathop{\varprojlim}\limits _{\begin{subarray}{c}
\lambda
\end{subarray} }
 C_{\lambda},
$
we shall mean to the \emph{projective limit} of the inverse system (\ref{defsolenoid}). (Here and in the sequel we refer to \cite{RZ} for generalities on inverse systems and projective limits).
  
Suppose that we choose another base point $p'\in C$, then
a canonical identification between pointed covers of $(C,p)$ and pointed covers of $(C,p')$ can be obtained by making the triple $(C_{\lambda}, p_{\lambda}; f_{\lambda})$ correspond to the triple $(C_{\lambda}, p_{\lambda}'; f_{\lambda})$ where
$p_{\lambda}'$ is the endpoint   of the lift to $(C_{\lambda}, p_{\lambda};f_{\lambda})$ of a simple path in $C$ connecting $p$ to $p'$. Under this identification the inverse system  
 (\ref{defsolenoid})  remains unchanged.
This means that 
the definition of $\mathcal{H}_{C}$ is independent of the choice of the base point $p\in C.$
  However,  the fact that we work with pointed covers allows the following group theoretic interpretation  of $
 \mathcal{H}_{C}.$

Let $\mathbb{H}$ denote the upper half plane and let us fix
a pointed universal cover map $(\mathbb{H},*) \to (C,p).$
 Then we
  can  canonically  identify the fundamental group
$\pi_1(C,p)$ with the
  group    $\Gamma < PSL_2(\mathbb{R})$     of deck transformations
 of this cover
 and each fundamental group
$\pi_1(C_{\lambda},p_{\lambda})$
with the    subgroup    $\Gamma_{\lambda}<\Gamma$     of deck transformations of the covering
 $ (\mathbb{H}, *) \to (C_{\lambda},p_{\lambda})$. We then have commutative diagrams

 \begin{equation} \label{EquivalenceDiagram1}
\begin{array}{lclclcl}
\mathbb{H}/\Gamma_{\lambda} & \stackrel{\phi_{\lambda}}{\longrightarrow} & C_{\lambda} & & \mathbb{H}/\Gamma_{\mu} & \stackrel{\phi_{\mu}}{\longrightarrow} & C_{\mu} \\
\ \downarrow &  & \downarrow f_{ \lambda} &  \text{and} & \ \downarrow & & \downarrow f_{\mu \lambda}   \\
\mathbb{H}/\Gamma& \stackrel{\phi_o}{\longrightarrow} & C & &\mathbb{H}/\Gamma_{\lambda} & \stackrel{\phi_{\lambda}}{\longrightarrow} & C_{\lambda}
\end{array}
\end{equation}
where the vertical arrows on the left are the obvious projection maps induced by the corresponding inclusions of Fuchsian groups
and the horizontal ones are isomorphisms of Riemann surfaces uniquely determined by 
 the choice of $\phi_o$ and the condition $ \phi_{\lambda}([*])=p_{\lambda}.$ 
  All this allows us to alternatively rewrite  our solenoid as
 $ \mathcal{H}_{\Gamma}= \mathop{\varprojlim}\limits _{\begin{subarray}{c}
\lambda
\end{subarray} }
     \mathbb{H}/\Gamma_{\lambda},$
where $\Gamma_{\lambda}$ ranges among all finite index subgroups of $\Gamma$. The use of 
$\mathcal{H}_{C}$ or $ \mathcal{H}_{\Gamma}$ through the paper will tend to depend on whether we want to emphasize the algebraic or the hyperbolic nature of the solenoid.

We notice that for any    finite index subgroup of 
$K<\Gamma$  the family of  finite index subgroups of $K$ provides a co-final family 
of finite index subgroups of $\Gamma$ which means that 
$\mathcal{H}_{\Gamma} \equiv \mathcal{H}_{K}$. In other words, the solenoid 
$\mathcal{H}_{\Gamma}$
depends only on the commensurability class of $\Gamma.$
 
In the same vein we observe that in defining the solenoid we can restrict ourselves to \emph{the collection of finite index 	
  normal subgroups},   since they already form
   a co-final family, and so  we can   view 
	$\mathcal{H}_C$
   as the projective limit of pointed Galois covers of $C$, that is
   \begin{equation} \label{defsolenoidGalois}
	\mathcal{H}_{C} \equiv \mathcal{H}_{\Gamma}:= \mathop{\varprojlim}\limits _{\begin{subarray}{c}
 \Gamma_{\lambda}\triangleleft_{f} \Gamma
\end{subarray} }
     \mathbb{H}/\Gamma_{\lambda}
		\end{equation}
where the notation $\triangleleft_{f}$ stands for finite index normal subgroup.

We recall that  the\textit{ profinite completion} of $\Gamma$ is  precisely the group 
  \begin{equation} \label{GammaHatGalois}
\widehat{ \Gamma}:=\mathop{\varprojlim}\limits _{\begin{subarray}{c}
 \Gamma_{\lambda}\triangleleft_{f} \Gamma
\end{subarray} } \Gamma/\Gamma_{\lambda}
 \end{equation}
and that each of the finite groups $\Gamma/\Gamma_{\lambda}$ can be viewed as   the automorphism group of the covering $\mathbb{H}/\Gamma_{\lambda} \to \mathbb{H}/\Gamma$ which  through the identifications in  (\ref{EquivalenceDiagram1}) can be further identified  to $\aut(C_{\lambda}, f_{\lambda})$, the  group of automorphisms of the equivalent covering 
$f_{\lambda}:C_{\lambda} \to C.$
This means that the group $\widehat{ \Gamma}$ 
is isomorphic to the \emph{algebraic fundamental group} of $C$, usually denoted 
$\pi_1^{alg}(C,p):=  \mathop{\varprojlim}\limits _{\begin{subarray}{c}
\lambda
\end{subarray} }
      \aut(C_{\lambda}, f_{\lambda}).$ 
As the similarity of the expressions 
(\ref{defsolenoidGalois}) and 
 (\ref{GammaHatGalois}) suggests,
 there is a natural action of  the group $\widehat{ \Gamma}$ on the solenoid $\mathcal{H}_{C}$  which	will play an important role later on. 

It is sometimes convenient to 
choose a co-final sequence of finite index normal subgroups
  $$
  \cdots \Gamma_{n+1}<\Gamma_{n}<\Gamma_{n-1}< \cdots   <\Gamma_{1}=\Gamma; \text{ \ \ with  \ \ }   \bigcap_n \Gamma_n= \{ 1 \}.
	$$
and regard $\mathcal{H}_{\Gamma}$ (resp.
 $\widehat{ \Gamma}$) as the projective limit of   the Riemann surfaces 
$\mathbb{H}/ \Gamma_n$ (resp. the groups $\Gamma/ \Gamma_n$).
Typically one chooses $\Gamma_{n}$ be \emph{the standard characteristic sequence of  subgroups} of $\Gamma$ defined as
\begin{equation} \label{defGammaN}
\Gamma_{n}= \text{ intersection of  all subgroups of } \Gamma \text{ of index } \leq n.
\end{equation}
So from now on we will use freely the following alternative notation  
 \begin{equation} \label{LimitWithGammaN}
\begin{array}{c}
  \mathcal{H}_{\Gamma} =
\mathop{\varprojlim}\limits _{\begin{subarray}{c}
n
\end{subarray} }
    \mathbb{H}/\Gamma_{n} \equiv \mathop{\varprojlim}\limits _{\begin{subarray}{c}
n
\end{subarray} }  C_{n} =\mathcal{H}_{C}  \\
		\\
			\text{and} \\
			\\
	  \widehat{\Gamma} = \mathop{\varprojlim}\limits _{\begin{subarray}{c}
n
\end{subarray} }
     \Gamma/\Gamma_{n}
 \equiv \mathop{\varprojlim}\limits _{\begin{subarray}{c}
n
\end{subarray} }
      \aut(C_{n}, f_{n}) =\pi_1^{alg}(C,p),
			\end{array}
\end{equation}
where the algebraic curves $C_n$ and the bonding functions
$f_{n,n-1}:C_n\to C_{n-1}$ and $f_{n}:C_n\to C$ correspond 
to the Riemann surfaces $\mathbb{H}/\Gamma_{n}$ 
and the obvious projection maps $\pi_{n,n-1}:\mathbb{H}/\Gamma_{n} \to \mathbb{H}/\Gamma_{n-1}$ and $\pi_{n}:\mathbb{H}/\Gamma_{n} \to \mathbb{H}/\Gamma$ through the identifications in  (\ref{EquivalenceDiagram1}).

The group  $\widehat{ \Gamma}$ may also be seen as the standard Cauchy sequence completion of $\Gamma$ with respect to the  non-Archimedean   metric $d$ defined by the formula

	 $$
    d(\gamma_1, \gamma_2)=   
		   \dfrac{1}{n}       \mathrm{ \  \ \ \  if \  \ \  \  } \gamma_2^{-1}\gamma_1\in \Gamma_n \setminus \Gamma_{n+1}  
$$
	  Usually this completion is realized as  follows:
	\begin{equation} \label{GammaHat}
	\widehat{\Gamma}=\big\{\tau= (\tau_n)_n \in \prod_n (\Gamma/\Gamma_n): \tau_n \equiv \tau_{n-1} (mod \ \Gamma_{n-1}) \big\}
	\end{equation}
	The solenoid       $\mathcal{H}_{\Gamma}$
      can be canonically identified with the quotient of the product   
			$\mathbb{H} \times \widehat{\Gamma}$  under the action  of $\Gamma$ defined by 
			$$\gamma(z,\tau)=(\gamma(z), \tau \gamma^{-1})$$
	This	quotient	space we shall denote by 
  $\mathbb{H} \times_{\Gamma} \widehat{\Gamma}$ and the identification can be realized as follows 
	(see  \cite{Odd}, Theorem 3.5): 
   \begin{equation}\label{defsolenoid3}
  \begin{array}{rcl}
      \mathbb{H} \times_{\Gamma} \widehat{\Gamma} & \equiv &  \mathcal{H}_{\Gamma} \\
 \big[ z,\tau \big]  & \longleftrightarrow &     ( \tau_{n}(z) \in \mathbb{H}/\Gamma_{n}   )_{n}
  \end{array}
\end{equation}		
Both the hyperbolic metric 
$d_{h}$ on  
$\mathbb{H}$ and the non-archimedean metric 
$\hat {d}$
 on $\widehat{\Gamma}$ are $\Gamma$-invariant and so the corresponding product metric
on 
 $\mathbb{H} \times \widehat{\Gamma}$ descends to a well-defined metric 
$d_{\mathcal{H}}$ on $\mathcal{H}_{\Gamma}$
that induces the
inverse-limit topology. In explicit terms

$$
d_{\mathcal{H}}([ z_1,\tau_1],  [ z_2,\tau_2 ])= 
\inf_{g\in \Gamma}  \max\left\{ d_h(z_2,gz_1), \hat{d}(\tau_2, \tau_1 g^{-1})  \right\}
$$
 Let $r=1/n$ be smaller than the  injectivity radius of $ \mathbb{H}/\Gamma$ (or, equivalently, the minimum of the translation length of the elements in 
$\Gamma\setminus \{\Id\}$); then  the  ball of radius $r$ in $\mathcal{H}_{\Gamma}$ centered at a point 
$[z,\tau]$ equals  $D\times T$  where $D$ is the  hyperbolic disc of radius $r$ centered at $z$  and $T$ is the coset $\tau\widehat{\Gamma}_{n+1}.$ We see that 
$\mathcal{H}_{\Gamma}$ carries a complex structure with respect to the $z$-variable. Specifically, a continuous function $f(z,\tau)$ defined on an open set of  $\mathcal{H}_{\Gamma}$ will be \emph{said  to be holomorphic} if it is so with respect to the $z$-variable.
 As a topological space $ \mathcal{H}_{\Gamma} $ is compact and connected, but not path connected. There are natural projections
 $$
\begin{tikzpicture}[node distance=1.7 cm, auto]
  \node (P) {$ \   \  \  \  \    \mathcal{H}_{C}\equiv \mathbb{H} \times_{\Gamma} \widehat{\Gamma}$};
  \node (A) [below of=P, left of=P] {$ \  \ \   \  \   \   \  \  C \equiv \mathbb{H} /\Gamma$};
  \node (C) [below of=B, right of=P] {$\widehat{\Gamma}/\Gamma$};
  \draw[->] (P) to node [swap] {${\pi}_1$} (A);
  \draw[->] (P) to node {${\pi}_2$} (C);
\end{tikzpicture}
$$
where $\widehat{\Gamma}/\Gamma$ stands for the set of left co-sets.
The fibers of $\pi_1$ are totally disconnected compact sets identifiable to
  $\widehat{\Gamma}$ and   the fibres of $\pi_2$,
  called \emph{the leaves}, are the path connected
   components,   each of them being a dense   subset of
  $ \mathcal{H}_{C} .$  They are parametrized by $\widehat{\Gamma}/\Gamma$. The leaf

   $$
   \mathcal{B}_{C}= \mathcal{B}_{\Gamma}:=\pi_2^{-1}([1])= \mathbb{H} \times_{\Gamma} \{ 1 \}   $$
   will be called \emph{the base leaf} and it is conformally equivalent to $ \mathbb{H}.$
	
	Riemann surface solenoids were introduced by D. Sullivan in \cite{Sul} and were then studied by several authors, including Sullivan himself, Biswas, Markovic, Nag, Odden,  Penner,  \v{S}ari\'c
	and several others (\cite{BN}, \cite{MS}, \cite{Odd}, \cite{PS}).

	\subsubsection{ Automorphisms of a solenoid} \label{Autsolenoids}
	We start with a couple of observations 
\begin{lemma} \label{lemma} 
Let $\{\Gamma_{n}\}_n$ be the standard sequence of characteristic subgroups introduced in (\ref{defGammaN}). Then
\begin{enumerate}
 \item An element  $\alpha \in PSL_2( \mathbb{R})$ lies in 
$\com(\Gamma)$
  if and only if for any index $n$
	there is a finite index subgroup $K_n<\Gamma_n$ such that $\alpha K_n \alpha^{-1}$ is also a finite index subgroup of $\Gamma_n.$
\item Let $\alpha \in \com(\Gamma)$. There is an increasing sequence of positive integers $a_n=a_n(\alpha)$
such that   $\alpha \Gamma_{a_n}\alpha^{-1}<\Gamma_{n}.$  
\end{enumerate}
\end{lemma}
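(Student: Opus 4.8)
The plan is to recall the standard fact that $\alpha\in PSL_2(\mathbb{R})$ lies in $\com(\Gamma)$ precisely when $\Gamma\cap\alpha\Gamma\alpha^{-1}$ has finite index in both $\Gamma$ and $\alpha\Gamma\alpha^{-1}$, and to run this through the lattice of finite index subgroups. For part (1), the forward direction is immediate: if $\alpha\in\com(\Gamma)$ then $\Delta:=\Gamma\cap\alpha\Gamma\alpha^{-1}$ has finite index in $\Gamma$, hence $\Delta\cap\Gamma_n$ has finite index in $\Gamma$ and therefore in $\Gamma_n$; taking $K_n:=\Gamma_n\cap\alpha^{-1}\Gamma_n\alpha\cap\Delta$ (or more simply $K_n=\Gamma_n\cap\alpha^{-1}\Gamma\alpha$ intersected suitably) one checks $K_n<\Gamma_n$ has finite index and $\alpha K_n\alpha^{-1}<\Gamma_n$ has finite index, because conjugation by $\alpha$ carries a finite index subgroup of $\Gamma$ to a finite index subgroup of $\alpha\Gamma\alpha^{-1}$, and intersecting with $\Gamma_n$ on the target side keeps finite index there. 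For the converse, given such $K_n$ for $n=1$ (i.e.\ for $\Gamma_1=\Gamma$ itself), we get a finite index $K<\Gamma$ with $\alpha K\alpha^{-1}<\Gamma$ of finite index; then $K<\Gamma\cap\alpha^{-1}\Gamma\alpha$, so this intersection has finite index in $\Gamma$, and applying the same to $\alpha^{-1}$ (or conjugating the inclusion $\alpha K\alpha^{-1}<\Gamma$ by $\alpha^{-1}$) shows $\Gamma\cap\alpha\Gamma\alpha^{-1}$ has finite index in $\Gamma$; since it also equals $\alpha(\alpha^{-1}\Gamma\alpha\cap\Gamma)\alpha^{-1}$ it has finite index in $\alpha\Gamma\alpha^{-1}$ as well, so $\alpha\in\com(\Gamma)$.

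For part (2), fix $\alpha\in\com(\Gamma)$ and use part (1) together with the defining property of the characteristic sequence $\Gamma_n$ from (\ref{defGammaN}): $\Gamma_n$ is the intersection of all subgroups of $\Gamma$ of index $\le n$, hence $\Gamma_n$ is characteristic in $\Gamma$ and any finite index subgroup of $\Gamma$ contains some $\Gamma_m$. By part (1) (applied with index $n$) there is a finite index subgroup $K_n<\Gamma_n$ with $\alpha K_n\alpha^{-1}<\Gamma_n$ of finite index. Since $K_n$ has finite index in $\Gamma$, it contains $\Gamma_{m}$ for some $m=m(n)$, and I may assume $m\ge n$; moreover conjugation by $\alpha$ sends $\Gamma_{b}$ into $K_n$ for suitable $b$ because $\alpha^{-1}\Gamma_n\alpha\supset\alpha^{-1}(\alpha K_n\alpha^{-1})\alpha=K_n$ has finite index in $\Gamma$ (here I use that $\alpha^{-1}\Gamma_n\alpha$ contains a finite index subgroup of $\Gamma$, again by part (1) applied to $\alpha^{-1}$), so $\alpha^{-1}\Gamma_n\alpha\cap\Gamma$ contains $\Gamma_b$ for some $b$; then $\alpha\Gamma_b\alpha^{-1}<\Gamma_n$. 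Finally, replacing $b=b(n)$ by $a_n:=\max(a_{n-1}+1,\ b(1),\ b(2),\dots,b(n))$ (and using $\Gamma_{a_n}<\Gamma_{b(n)}$ since $a_n\ge b(n)$ and the sequence $\Gamma_k$ is decreasing) produces a strictly increasing sequence $a_n$ with $\alpha\Gamma_{a_n}\alpha^{-1}<\Gamma_n$, as required.

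The main obstacle is purely bookkeeping: one must be careful about on which side (source or target of conjugation by $\alpha$) each finite index condition lives, since $\alpha$ need not normalize $\Gamma$. The clean way to organize this is to observe once and for all that for $\alpha\in\com(\Gamma)$ both $\Gamma\cap\alpha\Gamma\alpha^{-1}$ and $\Gamma\cap\alpha^{-1}\Gamma\alpha$ have finite index in $\Gamma$ (the second because $\com(\Gamma)$ is a group, so $\alpha^{-1}\in\com(\Gamma)$ too), and then every statement reduces to: a finite index subgroup of $\Gamma$ contains some $\Gamma_m$, and conjugating such an inclusion by $\alpha^{\pm1}$ and re-intersecting with a suitable $\Gamma_n$ stays finite index on the appropriate side. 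With that lemma in hand both parts are a short diagram chase through the characteristic filtration; no hyperbolic geometry beyond the definition of $\com(\Gamma)$ is needed.
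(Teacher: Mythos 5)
Your proposal is correct and follows essentially the same route as the paper: the forward direction of (1) via the intersection $\Gamma\cap\alpha^{-1}\Gamma\alpha$ (cut down to $\Gamma_n$, using that commensurability passes to finite index subgroups), the converse by noting $\Gamma\cap\alpha\Gamma\alpha^{-1}$ contains the finite index subgroup $\alpha K\alpha^{-1}$, and (2) by choosing $a_n$ so that $\Gamma_{a_n}\subset K_n$, which is possible because every finite index subgroup of $\Gamma$ contains some term of the characteristic sequence. The paper merely streamlines (1) by reducing to $n=1$ via $\com(\Gamma_n)=\com(\Gamma)$; your extra bookkeeping is harmless.
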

\begin{proof}
(1) Since $\com(\Gamma_n)=\com(\Gamma)$, it is enough to prove it for $\Gamma_1=\Gamma.$
In one direction this is proved  by letting $K$ be $\Gamma\cap \alpha^{-1}\Gamma \alpha.$  For
the converse  statement note that now by hypothesis 
$\Gamma\cap \alpha\Gamma \alpha^{-1}$ contains 
$ \alpha K \alpha^{-1}$ 
which is a finite index subgroup of both 
$\Gamma$ and $ \alpha\Gamma \alpha^{-1}$, hence
$\Gamma\cap \alpha\Gamma \alpha^{-1}$ must also be 
is a finite index subgroup of both 
$\Gamma$ and $ \alpha\Gamma \alpha^{-1}.$
 
 (2)   follows by choosing $a_n$  
such that $\Gamma_{a_n}$   is contained in $K_{n}.$ 
\end{proof}

The first part of Lemma \ref{lemma} allows us to regard
the elements   $\alpha \in \com(\Gamma)$ as
  automorphisms (i.e. holomorphic bijections) of
  $ \mathcal{H}_{\Gamma}$ in the following way.  
	   Choose a finite index subgroup $K< \Gamma$ such that 
	$\alpha K\alpha^{-1}< \Gamma$. Then the automorphism induced by $\alpha$ results as composition of the following three isomorphisms: Start with the identification $\mathcal{H}_{\Gamma}  \equiv  \mathcal{H}_{K}$, then compose with the obvious isomorphism 
	$\mathcal{H}_{K} \cong \mathcal{H}_{\alpha K \alpha^{-1}}$ and then with the identification  $\mathcal{H}_{\alpha K \alpha^{-1}} \equiv \mathcal{H}_{\Gamma}.$ In more explicit terms: 
	\begin{equation} \label{DefinitionAlpha}
	\begin{array}{lcclclc}
	\mathcal{H}_{\Gamma} & \equiv & \mathcal{H}_{K} & \xrightarrow{\sim} &  
	\mathcal{H}_{\alpha K \alpha^{-1}} & \equiv & \mathcal{H}_{\Gamma}   \\    \left[z,\tau\right] & \equiv & [\gamma z,\tau' ]  &  \longrightarrow & [ \alpha \gamma z, \alpha \tau' \alpha^{-1} ] & \equiv &  [ \alpha \gamma z, \alpha \tau' \alpha^{-1} ] 
\end{array}
\end{equation}
  where $\tau' \in \widehat{K}, \gamma\in \Gamma$ and 
	$\tau=\tau'\gamma.$ (Here we are using the identification  $\widehat{K}/K \cong \widehat{\Gamma}/\Gamma$, which is a direct consequence  of the elementary identity 
	$\Gamma/K \cong \widehat{\Gamma}/\widehat{K}$).
	
	Note that on the base leaf $\alpha$ acts simply  by the formula 
	$\alpha[z,\Id]=[\alpha z,\Id]$, thus $\alpha$ is base leaf preserving. Conversely, 
	   let $F:\mathcal{H}_{\Gamma} \to \mathcal{H}_{\Gamma}$ be an automorphism preserving the base leaf  $\mathcal{B}_{\Gamma} \equiv  \mathbb{H} $
then, clearly,  $ F([ z,\Id] =[\alpha z, \Id]$ for some  $\alpha=\alpha_F    \in PSL_2(\mathbb{R})$ and it turns out that, in fact,  $\alpha \in \com(\Gamma).$ This result can be found in \cite{Odd}, Corollary 4.8 and   \cite{BN}, Proposition 4.12).

On the other hand an element  $\tau\in\widehat{\Gamma}$
  acts as an
	automorphism of
  $\mathcal{H}_{\Gamma} $  simply by the rule
  \begin{equation} \label{actionGammahat}
  \tau([z,\tau'])=[z, \tau \tau']
  \end{equation} 
This formula defines a transitive action of $\widehat{\Gamma}$ on the set of leaves
  which agrees with the standard  group action of 
$\widehat{\Gamma}$ on $\widehat{\Gamma}/\Gamma$.
  This action  is base leaf preserving only when $\tau\in \Gamma.$ In other words, one has
\begin{equation} \label{Inters}
 \widehat{\Gamma} \cap \com(
 \Gamma) =  \Gamma
\end{equation}

 Now, let $F\in \aut (\mathcal{H}_{\Gamma})$ be an arbitrary automorphism of  $\mathcal{H}_{\Gamma}$ and suppose that $F$   sends the base leaf $\pi_2^{-1}([1])$ to another leaf 
$\pi_2^{-1}([\tau])$, then $\tau^{-1}\circ F$   will be a base leaf preserving automorphism, thus we see that
\begin{equation} \label{aut(H)}
 \aut(\mathcal{H}_{\Gamma})=\widehat{\Gamma} \cdot 
\com(\Gamma) =   \com( \Gamma)\cdot \widehat{\Gamma}
\end{equation}
where the second equality holds because the product of the two subgroups is a group.


\section{The  group structure of $\aut (\mathcal{H}_{\Gamma})$ }
 
Turning to the identity (\ref{aut(H)}) above we should mention 
that, in fact, Odden 
has shown (see Theorem 4.13 and Corollary 4.14 in  \cite{Odd}) that  the rule $(\tau,\alpha) \to \tau \alpha$ yields a bijection between the set $ \widehat{\Gamma}\times_{\Gamma} \com(\Gamma)$ (where 
	$\Gamma$ acts by the rule $\gamma (\tau,\alpha)=(\tau \gamma^{-1}, \gamma \alpha)$)
	and $\aut (\mathcal{H}_{\Gamma}).$ The author warns however that \textit{``(this) theorem 
does not shed light on its group structure"}. This group structure will  turn out to be a certain completion of $\com(\Gamma)$ whose description is the goal of this section.

	\subsection {The Belyaev completion of $\com(\Gamma)$.} \label{Belyaev}
	
 A  Hecke pair is a pair of groups $(G,H)$ where $H$ is a
subgroup of G such that  $H$ and $gHg^{-1}$ are commensurable for all $g \in G.$ If $H$ is residually finite  the natural left action of $G$ on the sets of cosets  
 $$\mathcal{B}:=\{xK: x\in G  \text{ and } K \text{ commensurable with } H \}$$
determines a faithful permutation represention 
                $G \hookrightarrow \sym( \mathcal{B}).$

Endowed with the the topology of pointwise convergence arising from the discrete topology on  
 $\mathcal{B}$ the group $\sym( \mathcal{B})$ of permutations of $\mathcal{B}$ becomes a Hausdorff topological group (see \cite{Be}, see also  
\cite{KLQ} and \cite{RW}).
As $xK$ varies in $\mathcal{B}$ the collection of stabilisers
		$$\sym( \mathcal{B})_{xK}=\{\phi \in  \sym( \mathcal{B}): \phi(xK)=xK\}$$
	provides a  subbase  of neighbourhoods of the identity   for this topology. Accordingly, the collection
		of   stabilisers
		$$G_{xK}=\sym( \mathcal{B})_{xK}\cap G=xKx^{-1},
		\text{ \ with \ } xK \in  \mathcal{B} $$
		is a   subbase  of neighbourhoods of the identity  for the induced topology on $G.$

Here we will be concerned with the Hecke pair 	$(\com(\Gamma), \Gamma)$ where, as before,  $\Gamma$  stands for the Fuchsian group uniformising a compact Riemann surface of genus $g\geq 2.$
We will denote by $\overline{\com(\Gamma)}$ the closure of (the image of) $\com(\Gamma)$
in $\sym( \mathcal{B})$ and we will refer to this group  
as the 
\emph{Belyaev completion} of $\com(\Gamma)$.

  A feature of this completion is that it contains the group $\widehat{\Gamma}$ as the closure of   $ \Gamma$.  This can be seen by considering   the following chain of topological groups
$$
\Gamma< \prod_n (\Gamma/\Gamma_n) <  \prod_n \sym(\Gamma/\Gamma_n) <\sym(\mathcal{B}^*) <  \sym(\mathcal{B})
$$
where 
$
\mathcal{B}^*=\{x \Gamma_n: x\in \com(\Gamma), n\in \mathbb{N} \} \subset \mathcal{B}
$.  Since $ \prod_n (\Gamma/\Gamma_n)$ is compact, hence closed,  the closure of $\Gamma$  in 
$\sym(\mathcal{B})$ agrees with its closure in $\prod_n (\Gamma/\Gamma_n)$, but this is one of the definitions of $\widehat{\Gamma}$; see (\ref{GammaHat}).

Similarly, since  $\sym(\mathcal{B}^*)$ is clearly a closed subset of $\sym(\mathcal{B})$, the closures of $\com(\Gamma)$ in $\sym(\mathcal{B}^*)$ and     $\sym(\mathcal{B})$ agree.

The next proposition collects the main properties of  $\overline{\com(\Gamma)}.$
\begin{proposition} \label{ComHat}
\begin{enumerate}
\item $\overline{\com(\Gamma)}$ is a locally compact subgroup of 
$\sym( \mathcal{B}^*)<\sym( \mathcal{B})$
such that the closure 
 $\overline{H}$
of each subgroup
 $H$ commensurable with $\Gamma$ 
is a  compact open subgroup.
	\item  $\overline{\Gamma}$ can be identified to $\widehat{\Gamma}$. 
	\item  $\overline{\com(\Gamma)}=\widehat{\Gamma} \cdot 
\com(\Gamma) =   \com( \Gamma)\cdot \widehat{\Gamma}$
	\item  $\overline{\com(\Gamma)}/ \ \widehat{\Gamma} \equiv
\com(\Gamma)/\Gamma $   
\end{enumerate}
		\end{proposition}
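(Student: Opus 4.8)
The plan is to establish the four assertions in turn, using throughout that the surface group $\Gamma$ is residually finite, so that the permutation representation $\com(\Gamma)\hookrightarrow\sym(\mathcal{B})$ of \ref{Belyaev} is faithful and the topology on $\com(\Gamma)$ is the one induced from $\sym(\mathcal{B})$.

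\textbf{Parts (1) and (2).} The key observation is that every finite-index subgroup $\Gamma_n<\Gamma$ is an \emph{open} subgroup of $\com(\Gamma)$: it is exactly the stabiliser $\com(\Gamma)_{\Gamma_n}=\sym(\mathcal{B})_{\Gamma_n}\cap\com(\Gamma)=\Gamma_n$ of the coset $1\cdot\Gamma_n\in\mathcal{B}$, so it belongs to the defining subbase of neighbourhoods of the identity. I would then invoke the elementary fact that \emph{the closure of an open subgroup of a dense subgroup is an open subgroup}: if $U=V\cap D$ is open in a dense subgroup $D$ of a topological group $X$ (with $V$ open in $X$), then $V\cap D$ is dense in $V$, so $V\subseteq\overline{U}$, and the subgroup $\overline{U}$ has non-empty interior, hence is open. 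Applied to $\Gamma<\com(\Gamma)<\overline{\com(\Gamma)}$ this shows $\overline{\Gamma}$ is open; and $\overline{\Gamma}\cong\widehat{\Gamma}$ is precisely the computation carried out just before the statement — $\prod_n(\Gamma/\Gamma_n)$ is compact, hence closed in $\sym(\mathcal{B})$, and inside it the closure of $\Gamma$ is $\widehat{\Gamma}$ by $(\ref{GammaHat})$ — which is (2). In particular $\widehat{\Gamma}=\overline{\Gamma}$ is a compact open subgroup of $\overline{\com(\Gamma)}$, so $\overline{\com(\Gamma)}$ is locally compact. For $H$ commensurable with $\Gamma$ I would pick $n$ with $\Gamma_n<H$, write $H=\bigsqcup_{i=1}^{m}h_i\Gamma_n$, and take closures; since left translations are homeomorphisms and finite unions commute with closure, $\overline{H}=\bigsqcup_{i=1}^{m}h_i\overline{\Gamma_n}$, where $\overline{\Gamma_n}$ is a closed subgroup of the compact group $\widehat{\Gamma}$ (hence compact) and open by the lemma; so $\overline{H}$ is compact open, finishing (1).

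\textbf{Part (3).} Here I would note that $\com(\Gamma)\cdot\widehat{\Gamma}$ is a union of left cosets of the open subgroup $\widehat{\Gamma}$, i.e. $\com(\Gamma)\cdot\widehat{\Gamma}=\pi^{-1}\big(\pi(\com(\Gamma))\big)$ for the quotient map $\pi\colon\overline{\com(\Gamma)}\to\overline{\com(\Gamma)}/\widehat{\Gamma}$ onto the coset space; since $\widehat{\Gamma}$ is open this coset space is discrete, hence $\com(\Gamma)\cdot\widehat{\Gamma}$ is clopen in $\overline{\com(\Gamma)}$. Containing the dense subgroup $\com(\Gamma)$, it must equal $\overline{\com(\Gamma)}$; the same argument with right cosets gives $\widehat{\Gamma}\cdot\com(\Gamma)=\overline{\com(\Gamma)}$.

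\textbf{Part (4).} Finally I would define $\com(\Gamma)/\Gamma\to\overline{\com(\Gamma)}/\widehat{\Gamma}$ by $\alpha\Gamma\mapsto\alpha\widehat{\Gamma}$: it is well defined since $\Gamma\subset\widehat{\Gamma}$ and surjective by (3). For injectivity, $\alpha\widehat{\Gamma}=\beta\widehat{\Gamma}$ with $\alpha,\beta\in\com(\Gamma)$ forces $\beta^{-1}\alpha\in\widehat{\Gamma}\cap\com(\Gamma)$, and this intersection is $\Gamma$ — the analogue of $(\ref{Inters})$ inside $\sym(\mathcal{B})$ — because every element of $\widehat{\Gamma}=\overline{\Gamma}$ lies in the closed stabiliser $\sym(\mathcal{B})_{\Gamma}$, hence fixes the coset $\Gamma$, whereas an element of $\com(\Gamma)$ fixing the coset $\Gamma$ already lies in $\Gamma$. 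The resulting bijection is equivariant for left multiplication by $\com(\Gamma)$, which is the meaning of "$\equiv$". The only genuinely delicate point I anticipate is the bookkeeping needed to identify $\overline{\Gamma}$ with $\widehat{\Gamma}$, namely matching the stabiliser topology on $\com(\Gamma)$, the product topology on $\prod_n(\Gamma/\Gamma_n)$, and the pointwise-convergence topology on $\sym(\mathcal{B})$; once that is settled, the rest is soft topological-group arguments plus residual finiteness of $\Gamma$.
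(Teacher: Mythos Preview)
Your argument is correct and proceeds along the same lines as the paper. The only noteworthy difference is in part~(1): the paper simply invokes Belyaev's general theorem on Hecke pairs (Theorem~7.1 of \cite{Be}), whereas you give a self-contained elementary argument via the lemma that the closure of an open subgroup of a dense subgroup is open, combined with the identification $\overline{\Gamma}=\widehat{\Gamma}$ already established in the text. For (3) the paper argues that each coset $x\overline{\Gamma}$ is open and hence meets the dense subset $\com(\Gamma)$, which is exactly your clopen-saturation argument viewed elementwise; and for (4) the paper's terse ``follows from (3)'' is precisely the second-isomorphism-type bijection you spell out, together with $\widehat{\Gamma}\cap\com(\Gamma)=\Gamma$, which you justify and which the paper records elsewhere as~(\ref{Inters}). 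Your route buys self-containedness (no appeal to \cite{Be}) at the cost of reproving a special case of a known completion theorem; the paper's route is shorter but black-boxes the local compactness.
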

		\begin{proof}
		(1) is the content of Theorem	7.1 in \cite{Be}.	
		
(2) is the comment preceding this proposition.

(3) Let $x\in \overline{\com(\Gamma)}.$ Applying part (1) 
to $H=\Gamma$ we see that 
$x \overline{\Gamma} \cap \com(\Gamma)$ is non-empty, so there is some $\alpha \in \com(\Gamma)$ such that $\alpha=x \tau$ for some 
$\tau\in \widehat{\Gamma}$, hence $x\in  \com( \Gamma)\cdot \widehat{\Gamma}=
\widehat{\Gamma} \cdot \com( \Gamma).$

(4) follows from (3).
\end{proof}

\begin{remark} (\cite{KLQ}, Example 2.4).
We observe that although $\overline{\com(\Gamma)}$ is complete (being locally compact), the group $\sym(\mathcal{B})$ itself is not. For instance the sequence of functions 
$$
\phi_n(xK) =
\left\{
	\begin{array}{ll}
		xK  & \mbox{if } xK \neq \Gamma_j, \text{ with } 1\leq j \leq n \\
		\Gamma_{i+1} & \mbox{if } xK = \Gamma_i, \text{ with } i < n \\
		\Gamma_{1} & \mbox{if }   xK = \Gamma_n
	\end{array}
\right.
$$
 converges to the shift map $\Gamma_i \to \Gamma_{i+1}$ which is not a bijection.   In fact in \cite{Be} the completion of $G$ is defined to be the closure $\overline{G}$ of $G$ in $Map(G)$, the semigroup of maps of $G$, and it is then shown that $\overline{G}<\sym(\mathcal{B}).$
 \end{remark}
	
	\subsection {The isomorphism  $\aut(\mathcal{H}_{C}) \cong \overline{\com(\Gamma)}$ } 
	
\begin{lemma} \label{isomorphism of quotients}
  The inclusion $\com(\Gamma)< \aut(\mathcal{H}_{\Gamma})$ induces the following natural bijection between sets of cosets
$$
\begin{array}{rll} \com(\Gamma) / \Gamma_n & \simeq & \aut(\mathcal{H}_{\Gamma}) / 
 \  \widehat{\Gamma}_n  \\
x\Gamma_n & \to & x \widehat{\Gamma_n}
\end{array}
        $$ 
	\end{lemma}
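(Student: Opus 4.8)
The plan is to exhibit the claimed bijection explicitly and check it is well-defined, injective and surjective, relying on the structural description $\aut(\mathcal{H}_\Gamma)=\com(\Gamma)\cdot\widehat\Gamma$ from (\ref{aut(H)}) together with the intersection identity (\ref{Inters}). First I would recall that $\widehat{\Gamma_n}$ is an open subgroup of $\widehat\Gamma$ (it is the closure of $\Gamma_n$, equivalently $\ker(\widehat\Gamma\to\Gamma/\Gamma_n)$ since $\Gamma_n$ is normal), and that $\widehat\Gamma$ is a normal subgroup of $\aut(\mathcal{H}_\Gamma)$; indeed $\alpha\widehat\Gamma\alpha^{-1}=\widehat\Gamma$ for every $\alpha\in\com(\Gamma)$ because conjugation by $\alpha$ permutes the finite index subgroups of $\Gamma$ by Lemma \ref{lemma}(1) and hence extends to the profinite completion. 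Granting this, the product $\com(\Gamma)\cdot\widehat{\Gamma_n}$ need not a priori equal $\aut(\mathcal{H}_\Gamma)$, so the first substantive point is: every $F\in\aut(\mathcal{H}_\Gamma)$ can be written $F=x\tau$ with $x\in\com(\Gamma)$ and $\tau\in\widehat\Gamma$ by (\ref{aut(H)}); writing $\tau=\tau_n\cdot\tau'$ with $\tau_n$ a lift in $\Gamma$ of the image of $\tau$ in $\Gamma/\Gamma_n\cong\widehat\Gamma/\widehat{\Gamma_n}$ and $\tau'\in\widehat{\Gamma_n}$, and using $\Gamma<\com(\Gamma)$, we get $F=(x\tau_n)\tau'\in\com(\Gamma)\cdot\widehat{\Gamma_n}$. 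Hence the map $x\Gamma_n\mapsto x\widehat{\Gamma_n}$ is surjective onto $\aut(\mathcal{H}_\Gamma)/\widehat{\Gamma_n}$.

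Next I would verify the map is well-defined: if $x\Gamma_n=y\Gamma_n$ then $y^{-1}x\in\Gamma_n\subset\widehat{\Gamma_n}$, so $x\widehat{\Gamma_n}=y\widehat{\Gamma_n}$. For injectivity, suppose $x,y\in\com(\Gamma)$ with $x\widehat{\Gamma_n}=y\widehat{\Gamma_n}$, i.e. $y^{-1}x\in\widehat{\Gamma_n}\subset\widehat\Gamma$. But $y^{-1}x\in\com(\Gamma)$, so by (\ref{Inters}), $\widehat\Gamma\cap\com(\Gamma)=\Gamma$, we obtain $y^{-1}x\in\Gamma\cap\widehat{\Gamma_n}$. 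The remaining step is to identify $\Gamma\cap\widehat{\Gamma_n}$ with $\Gamma_n$ inside $\widehat\Gamma$: since $\Gamma$ is residually finite and $\Gamma_n=\ker(\Gamma\to\Gamma/\Gamma_n)$ while $\widehat{\Gamma_n}=\ker(\widehat\Gamma\to\Gamma/\Gamma_n)$, the square of canonical maps commutes and an element of $\Gamma$ lying in $\widehat{\Gamma_n}$ maps to $1$ in $\Gamma/\Gamma_n$, hence lies in $\Gamma_n$. Therefore $y^{-1}x\in\Gamma_n$, i.e. $x\Gamma_n=y\Gamma_n$, proving injectivity. Compatibility with the left $\com(\Gamma)$-action is immediate from the formula $x\Gamma_n\mapsto x\widehat{\Gamma_n}$, so the bijection is $\com(\Gamma)$-equivariant.

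I expect the main obstacle to be the bookkeeping around the identifications $\Gamma/\Gamma_n\cong\widehat\Gamma/\widehat{\Gamma_n}$ and $\widehat\Gamma/\widehat{K}\cong\Gamma/K$ used implicitly in (\ref{DefinitionAlpha}), and in particular making precise that $\widehat{\Gamma_n}$ is genuinely the closure of $\Gamma_n$ in $\aut(\mathcal{H}_\Gamma)$ (equivalently in $\widehat\Gamma$) rather than some larger group — this is where the normality of $\Gamma_n$ in $\Gamma$ and the coherence of the projective system are used. Once that is in place, the three verifications above are routine. A clean way to organize the write-up is to first state the two facts $\widehat\Gamma\lhd\aut(\mathcal{H}_\Gamma)$ and $\Gamma\cap\widehat{\Gamma_n}=\Gamma_n$ as preliminary observations, then deduce well-definedness, injectivity (via (\ref{Inters})) and surjectivity (via (\ref{aut(H)})) in that order.
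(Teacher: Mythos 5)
Your proof is correct, and its core mechanism is the same as the paper's: surjectivity comes from the product decomposition (\ref{aut(H)}), $\aut(\mathcal{H}_{\Gamma})=\com(\Gamma)\cdot\widehat{\Gamma}$, and injectivity from the intersection identity (\ref{Inters}), $\widehat{\Gamma}\cap\com(\Gamma)=\Gamma$. The only real divergence is in how the case $n>1$ is handled. The paper proves the statement for $n=1$ and then reduces the general case to it by replacing $\Gamma$ with $\Gamma_n$, using $\com(\Gamma_n)=\com(\Gamma)$ and $\mathcal{H}_{\Gamma_n}=\mathcal{H}_{\Gamma}$; this makes the analogues of (\ref{aut(H)}) and (\ref{Inters}) at level $n$ available for free. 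You instead work at level $n$ directly, which forces you to establish the two auxiliary facts $\widehat{\Gamma}=\Gamma\cdot\widehat{\Gamma_n}$ (so that $\aut(\mathcal{H}_{\Gamma})=\com(\Gamma)\cdot\widehat{\Gamma_n}$) and $\Gamma\cap\widehat{\Gamma_n}=\Gamma_n$ (via the kernel description of $\widehat{\Gamma_n}$ and residual finiteness). Both routes are valid; the paper's reduction is more economical, while your direct argument makes explicit exactly the identifications $\Gamma/\Gamma_n\cong\widehat{\Gamma}/\widehat{\Gamma_n}$ that the reduction quietly subsumes, which is arguably a virtue given that the paper itself leans on these identifications elsewhere (e.g.\ in (\ref{DefinitionAlpha})). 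The remarks about normality of $\widehat{\Gamma}$ in $\aut(\mathcal{H}_{\Gamma})$ are true but not needed for a bijection of left coset spaces.
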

	\begin{proof}
When $n=1$,   $\Gamma_n=\Gamma,$ and  the result follows  from (\ref{Inters}) and (\ref{aut(H)}). But the result holds for any $n$ because 
$ \com(\Gamma_n)=\com(\Gamma)$ and $\mathcal{H}_{\Gamma_n}=\mathcal{H}_{\Gamma}.$
  \end{proof}

The bijections  $ \com(\Gamma)/  \Gamma_n \simeq  \aut(\mathcal{H}_{\Gamma}) /
  \widehat{\Gamma}_n $  permit us to identify the sets
	$\mathcal{B}^*=\{x \Gamma_n: x\in \com(\Gamma), n\in \mathbb{N} \}$ and $\widehat{\mathcal{B}}=\{F \widehat{\Gamma_n}: F\in \aut(\mathcal{H}_{\Gamma}), n\in \mathbb{N} \} $ and therefore to
	transfer the natural action of  $\aut(\mathcal{H}_{\Gamma})$ on $\widehat{\mathcal{B}}$
		to an action  on
	$\mathcal{B}^*$.

	\begin{lemma} \label{TransferingActions}
	\begin{enumerate}
	\item The   action of $\aut(\mathcal{H}_{\Gamma})$ on  
	$\mathcal{B}^*$ introduced above 
	is given by the formula
			  $$
	\begin{array}{rclll}
	\aut(\mathcal{H}_{\Gamma})&\times& \mathcal{B}^* &  \longrightarrow & \mathcal{B}^* \\
	\ \  \ \ (F&, & x \Gamma_n) & \longrightarrow & \alpha x \Gamma_n
\end{array}
$$
where $ \alpha= \alpha(F,x,n)\in \com(\Gamma)$   is such that
 $\alpha^{-1}F \in x\widehat{\Gamma}_n x^{-1}.$	
	\item This action is faithful  and so it yields a permutation representation of 
	$\aut(\mathcal{H}_{\Gamma})$ in $\sym(\mathcal{B}^*)<\sym(\mathcal{B}).$
		\item Its restriction to $\com(\Gamma)$ is the standard action $(\alpha,  x \Gamma_n)  \to \alpha x \Gamma_n.$
	\end{enumerate}
	\end{lemma}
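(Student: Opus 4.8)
The plan is to verify the three assertions essentially by unwinding the identifications already set up, using Lemma \ref{isomorphism of quotients} as the principal engine. For part (1), I would first recall how the natural action of $\aut(\mathcal{H}_\Gamma)$ on $\widehat{\mathcal{B}} = \{F\widehat{\Gamma_n}\}$ is defined — it is simply left translation, $G\cdot F\widehat{\Gamma_n} = (GF)\widehat{\Gamma_n}$. Transporting this along the bijection of Lemma \ref{isomorphism of quotients}, the coset $x\Gamma_n \in \mathcal{B}^*$ corresponds to $x\widehat{\Gamma_n}$, and $G\cdot(x\widehat{\Gamma_n}) = (Gx)\widehat{\Gamma_n}$ must be pulled back to $\mathcal{B}^*$. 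By Proposition \ref{ComHat}(3) (or directly by (\ref{aut(H)})) the element $Gx \in \aut(\mathcal{H}_\Gamma) = \com(\Gamma)\cdot\widehat{\Gamma}$ can be written $Gx = \alpha\tau$ with $\alpha \in \com(\Gamma)$, $\tau\in\widehat{\Gamma}$; refining $\tau$ into $\widehat{\Gamma}_n$ will require first absorbing part of it into $\alpha$, so one wants $\alpha\in\com(\Gamma)$ with $\alpha^{-1}Gx\in\widehat{\Gamma}_n$, equivalently $\alpha^{-1}G \in x\widehat{\Gamma}_n x^{-1}$ — which is exactly the element $\alpha(F,x,n)$ named in the statement. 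Then $(Gx)\widehat{\Gamma_n} = \alpha x\widehat{\Gamma_n}$ corresponds back to $\alpha x\Gamma_n$, giving the formula. The one point needing care here is the \emph{existence} of such an $\alpha$: this is where I expect the main (mild) obstacle to lie. It follows from the fact that $x\widehat{\Gamma}_n x^{-1}$ is the closure of the finite-index subgroup $x\Gamma_n x^{-1}$ of $\com(\Gamma)$ (using $\com(\Gamma_n)=\com(\Gamma)$, Lemma \ref{lemma}(1)), together with Proposition \ref{ComHat}(1) applied to the commensurable subgroup $H = x\Gamma_n x^{-1}$: the coset $G\cdot\overline{H}$ meets $\com(\Gamma)$, i.e. there is $\alpha\in\com(\Gamma)$ with $\alpha^{-1}G\in\overline{H}=x\widehat{\Gamma}_n x^{-1}$. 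One should also check that the resulting coset $\alpha x\Gamma_n$ does not depend on the choice of $\alpha$ among such elements: two choices $\alpha,\alpha'$ differ by $\alpha^{-1}\alpha'\in \com(\Gamma)\cap x\widehat{\Gamma}_n x^{-1} = x\Gamma_n x^{-1}$ by (\ref{Inters}) (conjugated), whence $\alpha x\Gamma_n = \alpha' x\Gamma_n$.

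For part (2), faithfulness is inherited for free: the action of $\aut(\mathcal{H}_\Gamma)$ on $\widehat{\mathcal{B}}$ by left translation is faithful because $\bigcap_{F,n} F\widehat{\Gamma}_n F^{-1}$ reduces, already on the subfamily $\{\widehat{\Gamma}_n\}$, to $\bigcap_n\widehat{\Gamma}_n = \{1\}$ (the $\Gamma_n$ form a co-final chain with trivial intersection, so the same holds for their closures in $\widehat{\Gamma}\subset\aut(\mathcal{H}_\Gamma)$, and an automorphism fixing every $\widehat{\Gamma}_n$-coset in particular fixes $\widehat{\Gamma}_n$ itself for all $n$, hence lies in $\bigcap_n \widehat{\Gamma}_n$); transporting an isomorphism of $G$-sets preserves faithfulness. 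Thus we obtain an injective homomorphism $\aut(\mathcal{H}_\Gamma)\hookrightarrow\sym(\mathcal{B}^*)$, and since $\mathcal{B}^*\subset\mathcal{B}$ with $\sym(\mathcal{B}^*)$ sitting inside $\sym(\mathcal{B})$ as the stabiliser of the subset $\mathcal{B}^*$ (cf. the chain of groups displayed before Proposition \ref{ComHat}), we land in $\sym(\mathcal{B})$ as claimed.

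For part (3), restrict the formula of part (1) to $F = \alpha_0 \in \com(\Gamma)$: then $\alpha_0 x \in \com(\Gamma)$, so one may take $\alpha(F,x,n) = \alpha_0 x$ — indeed $\alpha_0 x\cdot x\widehat{\Gamma}_n x^{-1}\ni \alpha_0 x$... more precisely $\alpha^{-1}F = (\alpha_0 x)^{-1}\alpha_0 = x^{-1}\in x\widehat{\Gamma}_n x^{-1}$ is false unless $n=1$, so instead note directly that $\alpha^{-1}F\in x\widehat{\Gamma}_n x^{-1}$ is solved by $\alpha = Fx \cdot (x^{-1})$... the cleanest route is simply: for $F=\alpha_0\in\com(\Gamma)$ the translate $(Fx)\widehat{\Gamma_n}$ already has representative $\alpha_0 x\in\com(\Gamma)$, so under the identification of Lemma \ref{isomorphism of quotients} it corresponds to $\alpha_0 x\Gamma_n$; this is the standard left-translation action of $\com(\Gamma)$ on $\mathcal{B}^*$, which moreover is precisely the restriction used in Belyaev's construction of $\overline{\com(\Gamma)}$. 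Hence the permutation representation of $\aut(\mathcal{H}_\Gamma)$ just built extends the one defining $\overline{\com(\Gamma)}\subset\sym(\mathcal{B}^*)$ — the observation that makes the identification $\aut(\mathcal{H}_C)\cong\overline{\com(\Gamma)}$ of the next subsection essentially immediate.
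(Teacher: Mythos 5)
Your proposal is correct and follows essentially the same route as the paper, which simply observes that the condition $\alpha^{-1}F\in x\widehat{\Gamma}_n x^{-1}$ is a restatement of $Fx\widehat{\Gamma}_n=\alpha x\widehat{\Gamma}_n$ and declares parts (2) and (3) obvious; you merely supply the details (existence and well-definedness of $\alpha$ via Lemma \ref{isomorphism of quotients}, faithfulness via $\bigcap_n\widehat{\Gamma}_n=\{1\}$) that the paper leaves implicit. One small caution: invoking Proposition \ref{ComHat}(1) for an element $G\in\aut(\mathcal{H}_\Gamma)$ presupposes the identification with $\overline{\com(\Gamma)}$ that is only proved afterwards, so the existence of $\alpha$ should rest on (\ref{aut(H)}) / Lemma \ref{isomorphism of quotients} as you also indicate.
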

	\begin{proof}
	1) The condition imposed on $\alpha$ only means   that 
	$F  x \widehat{\Gamma}_n=\alpha  x \widehat{\Gamma}_n.$
	
	2) and 3) are obvious.
	\end{proof}

	This lemma together with Proposition \ref{ComHat}
	allows us to regard both groups $\overline{\com(\Gamma)}$  and  $\aut(\mathcal{H}_{\Gamma})$ as subgroups of $\sym( \mathcal{B}^*).$

	\begin{theorem} \label{Isom}
	$$\overline{\com(\Gamma)}= \aut(\mathcal{H}_{\Gamma})$$  
 \end{theorem}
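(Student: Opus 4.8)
\textit{Proof proposal.} The plan is to capitalise on the fact that, by Lemma~\ref{TransferingActions} and the construction of the Belyaev completion in \ref{Belyaev}, both $\overline{\com(\Gamma)}$ and $\aut(\mathcal{H}_{\Gamma})$ have already been realised as subgroups of $\sym(\mathcal{B}^{*})$, and that inside $\sym(\mathcal{B}^{*})$ they contain \emph{one and the same} copy of $\com(\Gamma)$: for $\overline{\com(\Gamma)}$ this is the commensurator embedded by left translation on cosets (by definition), and for $\aut(\mathcal{H}_{\Gamma})$ it is the very same left–translation action by Lemma~\ref{TransferingActions}(3). Since Proposition~\ref{ComHat}(3) gives $\overline{\com(\Gamma)}=\overline{\Gamma}\cdot\com(\Gamma)$ while the identity~(\ref{aut(H)}) gives $\aut(\mathcal{H}_{\Gamma})=\widehat{\Gamma}\cdot\com(\Gamma)$, the whole statement reduces to showing that the two resulting copies of ``$\widehat\Gamma$'' inside $\sym(\mathcal{B}^{*})$ coincide: the closure $\overline{\Gamma}$ of $\Gamma$ on one side, and the image of the natural $\widehat\Gamma$–action~(\ref{actionGammahat}) on $\mathcal{H}_{\Gamma}$, transported to $\sym(\mathcal{B}^{*})$ through Lemma~\ref{isomorphism of quotients} and Lemma~\ref{TransferingActions}, on the other.

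The key step will therefore be to prove that this transported action is a \emph{continuous} homomorphism $\rho\colon\widehat\Gamma\to\sym(\mathcal{B}^{*})$ whose image is exactly $\overline{\Gamma}$. For continuity it suffices to show that $\rho^{-1}$ of each subbasic neighbourhood $\sym(\mathcal{B}^{*})_{x\Gamma_{n}}$ of the identity is open in $\widehat\Gamma$; applying the formula of Lemma~\ref{TransferingActions}(1) one identifies this preimage with the stabiliser $\widehat\Gamma\cap x\widehat\Gamma_{n}x^{-1}$, which is open because $x\widehat\Gamma_{n}x^{-1}$ is a compact open subgroup of $\overline{\com(\Gamma)}$ by Proposition~\ref{ComHat}(1) (and $\widehat\Gamma=\overline{\Gamma}$ is itself compact open in $\overline{\com(\Gamma)}$). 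Next, evaluating Lemma~\ref{TransferingActions}(1) at an element $\gamma\in\Gamma\subset\widehat\Gamma$ with the choice $\alpha=\gamma$ shows that $\rho|_{\Gamma}$ is precisely the left–translation embedding $\Gamma\hookrightarrow\sym(\mathcal{B}^{*})$ used to build $\overline{\Gamma}$. Finally, $\Gamma$ is dense in the compact group $\widehat\Gamma$ and $\rho$ is continuous into the Hausdorff group $\sym(\mathcal{B}^{*})$, so $\rho(\widehat\Gamma)$ is compact, hence closed, and therefore equals the closure of $\rho(\Gamma)$, namely $\overline{\Gamma}$.

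Combining the two factorisations, $\aut(\mathcal{H}_{\Gamma})=\rho(\widehat\Gamma)\cdot\com(\Gamma)=\overline{\Gamma}\cdot\com(\Gamma)=\overline{\com(\Gamma)}$ as subgroups of $\sym(\mathcal{B}^{*})$, which is the desired equality. As a byproduct $\rho$ turns out to be injective (by the faithfulness in Lemma~\ref{TransferingActions}(2)), so the argument also re-proves that $\overline{\Gamma}\cong\widehat\Gamma$ and that the Belyaev topology on $\Gamma$ coincides with its profinite topology, in accordance with Proposition~\ref{ComHat}(2).

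The step I expect to be the main obstacle is the continuity assertion, i.e.\ the honest identification of $\rho^{-1}\bigl(\sym(\mathcal{B}^{*})_{x\Gamma_{n}}\bigr)$ with $\widehat\Gamma\cap x\widehat\Gamma_{n}x^{-1}$: this forces one to unwind carefully the two successive identifications $\com(\Gamma)/\Gamma_{n}\simeq\aut(\mathcal{H}_{\Gamma})/\widehat\Gamma_{n}$ and $\mathcal{B}^{*}\simeq\widehat{\mathcal{B}}$, and to invoke Proposition~\ref{ComHat}(1) in order to know that $\com(\Gamma)$–conjugates of the basic compact open subgroups $\widehat\Gamma_{n}$ are again compact open. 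Once that computation is in place, everything else is soft topology: density of $\Gamma$ in $\widehat\Gamma$, compactness of $\widehat\Gamma$, and Hausdorffness of $\sym(\mathcal{B}^{*})$.
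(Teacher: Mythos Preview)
Your argument is correct and genuinely different from the paper's. The paper proceeds by showing directly that $j(\aut(\mathcal{H}_{\Gamma}))$ is a \emph{closed} subset of $\sym(\mathcal{B}^{*})$ --- via an explicit limit construction: it enumerates $\com(\Gamma)$, builds an adapted cofinal sequence $\{\Gamma_{a_n}\}$, and from a sequence $j(F_n)$ converging to a limit point $f$ extracts elements $\alpha_n\in\com(\Gamma)$ whose successive quotients lie in $\Gamma_{a_{n-1}}$, assembling them into an element $\tau\in\widehat{\Gamma}$ with $f=j(\alpha_1\tau)$ --- and then checks that $\com(\Gamma)$ is dense in $j(\aut(\mathcal{H}_{\Gamma}))$. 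Your route instead exploits the two ready-made product decompositions $\overline{\com(\Gamma)}=\overline{\Gamma}\cdot\com(\Gamma)$ (Proposition~\ref{ComHat}(3)) and $\aut(\mathcal{H}_{\Gamma})=\widehat{\Gamma}\cdot\com(\Gamma)$ (identity~(\ref{aut(H)})), observes that the $\com(\Gamma)$-factors coincide by Lemma~\ref{TransferingActions}(3), and reduces everything to checking that the transported map $\rho:\widehat{\Gamma}\to\sym(\mathcal{B}^{*})$ is continuous with image $\overline{\Gamma}$, which follows from compactness of $\widehat{\Gamma}$ and density of $\Gamma$.

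What each approach buys: the paper's argument is self-contained in that it does not invoke Proposition~\ref{ComHat}(3) (hence not Belyaev's structure theorem) to prove the equality; it builds the closure property by hand. Your approach is softer and more conceptual, trading the explicit sequential construction for a one-line compactness/density argument --- but it leans on Proposition~\ref{ComHat}, which in turn rests on \cite{Be}. A minor remark: your appeal to Proposition~\ref{ComHat}(1) for the openness of $\widehat{\Gamma}\cap x\widehat{\Gamma}_n x^{-1}$ is legitimate but heavier than necessary; Lemma~\ref{lemma}(2) applied to $x^{-1}$ already gives some $\Gamma_{b_n}<x\Gamma_n x^{-1}$, hence $\widehat{\Gamma}_{b_n}\subset\widehat{\Gamma}\cap x\widehat{\Gamma}_n x^{-1}$, which suffices.
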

\begin{proof}
Let us denote by $ j:\aut(\mathcal{H}_{\Gamma})  \hookrightarrow \sym(\mathcal{B}^*)$ the permutation representation described in Lemma \ref{TransferingActions}. What  we have to prove is  that  
$\overline{\com(\Gamma)}= j(\aut(\mathcal{H}_{\Gamma})).$
By the third part of this lemma we only need to show that  1)	$j(\aut(\mathcal{H}_{\Gamma}))$ is a closed subset of $\sym( \mathcal{B}^*)$ and 2)  $j(\com(\Gamma))$ is dense in it. 

1) $j(\aut(\mathcal{H}_{\Gamma}))$ is a closed subset of $\sym( \mathcal{B}^*).$

We start with the observation that  $\com(\Gamma)$ is countable. Indeed in the non-arithmetic case $\com(\Gamma)$ is still discrete and  in the arithmetic case even the whole invariant quaternion algebra $A\Gamma$ is countable. So let 
$$
\com(\Gamma)=\{x_1=1, x_2, \cdots , x_n, \cdots\}
$$
be an enumeration of its elements.
Then we can inductively construct a co-final subsequence 
$\Gamma_{a_1}>\Gamma_{a_2}>\cdots  >\Gamma_{a_n}> \cdots  $ of our standard sequence of characteristic subgroups 
$\{\Gamma_n\}$ enjoying the property  
$$
\Gamma_{a_n}< \bigcap_{i,j\leq n} x_i\Gamma_jx_i^{-1}
$$

Let now $f\in \sym( \mathcal{B}^*)$ be an element in the closure of $j(\aut(\mathcal{H}_{\Gamma}))$. We want to show that $f$ lies in fact in  $j(\aut(\mathcal{H}_{\Gamma}))$. In order to do that, 
 for any 
$n \in \mathbb{N}$, we consider the following neighbourhoods of $f$ in $\sym( \mathcal{B^*}):$
$$V(f,n)=\{h\in \sym( \mathcal{B^*}): h(x_i\Gamma_{a_j})=f(x_i\Gamma_{a_j}); i,j\leq n \}=f \cdot \bigcap_{i,j\leq n} \sym( \mathcal{B}^*)_{\Gamma_{a_i}}
$$
Let $F_n\in \aut(\mathcal{H}_{\Gamma})$ such that 
 $j(F_n)\in V(f,n) \cap j(\aut(\mathcal{H}_{C})).$  By Lemma \ref{TransferingActions}, 
$ j(F_n)(\Gamma_{a_n})=\alpha_n\Gamma_{a_n}$ with $\alpha_n \in \com(\Gamma)$ such that 
 $\alpha_n^{-1}F_n \in \widehat{\Gamma}_{a_n}<\widehat{\Gamma}_{a_{n-1}}.$ Therefore 
$ j(F_n)(\Gamma_{a_{n-1}})=\alpha_n\Gamma_{a_{n-1}}.$ But, on the other hand,
 $j(F_n)(\Gamma_{a_{n-1}})=f(\Gamma_{a_{n-1}})=j(F_{n-1})(\Gamma_{a_{n-1}})=\alpha_{n-1}\Gamma_{a_{n-1}}$. The conclusion is that  $\alpha_{n-1}^{-1} \alpha_{n} \in \Gamma_{a_{n-1}}.$
 Thus, we can write
$$
\begin{array}{ll}
\alpha_2=\alpha_1 \gamma_1, & \gamma_1\in \Gamma_{a_1} \\
\alpha_3=\alpha_2 \gamma_2= \alpha_1 \gamma_1 \gamma_2, & \gamma_2\in \Gamma_{a_2}  \\ 
\cdots \cdots   \cdots  \cdots\cdots\cdots\cdots&  \cdots \\
\alpha_i=\alpha_1 \gamma_1 \gamma_2\cdots \gamma_{i-1}, & \gamma_{i-1}\in \Gamma_{a_{i-1}}
\end{array}
$$
 Now, put $\tau_i=\gamma_1 \gamma_2\cdots \gamma_{i} \ (mod \ \Gamma_{a_{i+1}}).$ 
Then, clearly, the sequence $\tau=(\tau_i)_i$ defines an element  of \ 
$\widehat{ \Gamma}=\mathop{\varprojlim}
  \Gamma/\Gamma_{a_i}.$ We claim that  $f=j(F)$, where 
$F=\alpha_1 \tau \in \aut(\mathcal{H}_{C}).$

 Indeed, $f$ agrees with $j(F)$ at the points 
$\Gamma_{a_{n}} \in  \mathcal{B^*}$  because on the one hand
$f(\Gamma_{a_{n}})= j(F_n)(\Gamma_{a_{n}})=\alpha_n\Gamma_{a_{n}}$ and on the other hand
$$
\alpha_n^{-1}F=\alpha_n^{-1}\alpha_1 \tau =
(\tau_{n-1}^{-1}\tau_i)_i\equiv \left(\gamma_n \gamma_{n+1}\cdots \gamma_{n+k-1} \ (mod \ \Gamma_{a_{n+k}})\right)_k\in \widehat{\Gamma}_{a_{n}}.
$$
Moreover, since $ \widehat{\Gamma}_{a_{n}} <x_i\widehat{\Gamma}_j x_i^{-1}$ for  $n\geq i,j$, the above relation also     proves that  
$F(x_i\Gamma_j )=\alpha_n x_i\Gamma_j=F_n(x_i\Gamma_j)=f(x_i\Gamma_j)$ and we conclude that $j(F)$ agrees with $f$ at all points of $\mathcal{B^*}$, as claimed.

2)  $j(\com(\Gamma))$ is dense in $j(\aut(\mathcal{H}_{\Gamma}))$.

Our subbasis of neighbourhoods of the identity 
in $j(\aut(\mathcal{H}_{\Gamma}))$ 
consists of the sets 
$j(T\widehat{\Gamma}_n T^{-1}), T \in \aut(\mathcal{H}_{\Gamma})$;  but since, by (\ref{aut(H)}),
each $T$ equals to a product of 
 the form 
 $T=x \tau'$, with $x \in \com(\Gamma)$ and 
$\tau' \in \widehat{\Gamma}$, this subbasis can be rewritten as   
$\{j(x\widehat{\Gamma}_n x^{-1}): x\in  \com(\Gamma)  \}_n.$ This
  makes it clear that any set in this subbasis contains one of the form 
$\widehat{\Gamma}_m $ for sufficienty large $m.$
  It follows that if $F\in \aut(\mathcal{H}_{\Gamma})$ any
neighbourhood  of $j(F)$  contains a smaller one of the form 
$V_m=j(F \cdot \widehat{\Gamma_m}).$ Thus, what we need to see is that for each $m$ the intersection $V_m\cap j( \com(\Gamma))$ is nonempty. 
Now, setting  $F=\alpha \tau$, with 
$\alpha \in \com(\Gamma)$ and 
$\tau=(\tau_i)_i \in \widehat{\Gamma}$ we see that, as before, 
$(\alpha \tau_m)^{-1}F= \tau_m^{-1} \tau \in \widehat{\Gamma_{m+1}}<\widehat{\Gamma_{m}}.$
 This means that 
	$j(\alpha \tau_m ) \in V_m\cap j( \com(\Gamma))$ as required.
 \end{proof}

\section{Galois action on solenoids and commensurators}
\subsection{The group $\overline{\com(\Gamma)}$ is Galois invariant}
 Let $Gal( \C/\Q)$  denote the group of field automorphisms of $\C$, and let
 $\sigma \in Gal( \C/\Q)$.  The obvious action of $Gal( \C/\Q)$ on complex algebraic curves 
 transforms pointed unramified covers of $(C,p)$ into pointed unramified covers of  $(C^{\sigma},p^{\sigma})$  thereby yielding a natural bijection
  \begin{center}
 $\begin{array}{llll}
 \sigma:& \mathcal{H}_{C}  & \longrightarrow & \mathcal{H}_{C^{\sigma}}  \\
 & (q_n)  & \longrightarrow & (q_n^{\sigma}) 
\end{array}$
\end{center}
where   $\mathcal{H}_{C}$ is regarded as the limit of the inverse system    $\big\{  \big(C_{n}, f_{n, n-1} \big) \big\}_n$	introduced in   (\ref{LimitWithGammaN}) so that 
	$q_n$ stands for a point in $C_n$ such that
		$f_{n, n-1}(q_n)=q_{n-1}.$

Our first goal is to show that although this bijection is far from being an isomorphism (even far from being continuous) it induces an isomorphism between the corresponding automorphism groups 
$\aut(\mathcal{H}_{C})$ and 
$\aut(\mathcal{H}_{C^{\sigma}}).$ In order to do that we  first need to write  the elements of $\aut(\mathcal{H}_{C})$ in a convenient way. 

Notice that to any increasing subsequence of positive integers 	
			$\{a_n\}_n$  there corresponds a co-final subsequence of standard characteristic  subgroups $\{\Gamma_{a_n}\}_n$  and algebraic curves 
	$C_{a_n}$	which still define the solenoid, that is  	
		$
\mathcal{H}_{C} = \mathop{\varprojlim}\limits 
     C_{a_n}.$
	 Let 
 \begin{equation} \label{IsomInvSystems}
  F=\{F_{n}\}_{n} :
 \Big\{  \Big(C_{a_n}, f_{a_n, a_{n-1}} \Big) \Big\}_{n} \longrightarrow
 \Big\{ \Big(C_{n }, f_{n,n-1} \Big) \Big\}_{n }
 \end{equation}
be a \textit{morphism of inverse systems}.
  By that 
we mean (see  \cite{RZ}) that each
$F_{n}$ is a rational (or, equivalently, holomorphic)  map between $C_{a_n}$ and $C_{n}$
 such that  for any  $n \geq 2$
the following diagram commutes
 \begin{center}
 $\begin{array}{lll}
 C_{a_n}  & \stackrel{F_{n }}{\xrightarrow{\hspace*{1cm}}} & C_{n}  \\
 \Big\downarrow{ f_{a_na_{n-1}}} &  &   \Big\downarrow{f_{n,n-1}}  \\
 C_{a_{n-1}}   & \stackrel{F_{n-1 }}{\xrightarrow{\hspace*{1cm}}} & C_{n-1}  
\end{array}$
\end{center}

   Clearly,  in this situation, $F$ defines a holomorphic map  $F:\mathcal{H}_{C} \to \mathcal{H}_{C}.$
Our next result states that any automorphism of  $\mathcal{H}_{C}$ can be written in this way.

\begin{proposition} \label{Allarestandard}
Every automorphism $F$ of $\mathcal{H}_{C}$ is induced by a morphism of  inverse limits as in (\ref{IsomInvSystems}).
\end{proposition}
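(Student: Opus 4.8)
The plan is to take an arbitrary $F\in\aut(\mathcal{H}_{C})=\aut(\mathcal{H}_{\Gamma})$ and, using the factorisation $F=\alpha\tau$ with $\alpha\in\com(\Gamma)$ and $\tau\in\widehat{\Gamma}$ from (\ref{aut(H)}), exhibit an increasing sequence $\{a_n\}_n$ and holomorphic maps $F_n:C_{a_n}\to C_n$ that fit into commuting squares and induce $F$ on the projective limit. First I would handle the two factors separately and then compose. For $\tau=(\tau_n)_n\in\widehat{\Gamma}$, the action (\ref{actionGammahat}) $\tau([z,\tau'])=[z,\tau\tau']$ is, through the identification (\ref{defsolenoid3}), the map sending $(q_n)_n\mapsto(\tau_n q_n)_n$ where each $\tau_n\in\Gamma/\Gamma_n=\aut(C_n,f_n)$ acts as a deck transformation of $C_n\to C$; since the $\tau_n$ are compatible modulo the bonding maps, $\tau$ is already a morphism of the inverse system $\{(C_n,f_{n,n-1})\}_n$ to itself with $a_n=n$. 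So the only real work is the factor $\alpha\in\com(\Gamma)$.

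For $\alpha\in\com(\Gamma)$, I would invoke Lemma \ref{lemma}(2): there is an increasing sequence $a_n=a_n(\alpha)$ with $\alpha\Gamma_{a_n}\alpha^{-1}<\Gamma_n$. Set $K_n=\Gamma_{a_n}$, so $\alpha K_n\alpha^{-1}<\Gamma_n$. Conjugation by $\alpha$ then gives a holomorphic covering map $\mathbb{H}/\Gamma_{a_n}\to\mathbb{H}/(\alpha\Gamma_{a_n}\alpha^{-1})\to\mathbb{H}/\Gamma_n$, the first arrow being the isomorphism $[z]\mapsto[\alpha z]$ and the second the natural projection. Translating through the identifications (\ref{EquivalenceDiagram1}) between $\mathbb{H}/\Gamma_{a_n}$ and $C_{a_n}$, and between $\mathbb{H}/\Gamma_n$ and $C_n$, this defines a rational map $F_n^{\alpha}:C_{a_n}\to C_n$. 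Compatibility of the squares, i.e. $f_{n,n-1}\circ F_n^{\alpha}=F_{n-1}^{\alpha}\circ f_{a_n,a_{n-1}}$, follows because $a_n$ is chosen increasing (so $\Gamma_{a_n}<\Gamma_{a_{n-1}}$ and $\alpha\Gamma_{a_n}\alpha^{-1}<\Gamma_n<\Gamma_{n-1}$) and everything in sight is a projection or conjugation, which commute; this is a routine diagram chase using the explicit formula (\ref{DefinitionAlpha}) for the action of $\alpha$ on $\mathbb{H}\times_\Gamma\widehat{\Gamma}$. One checks directly that the induced map on $\mathop{\varprojlim}C_{a_n}=\mathcal{H}_C$ is exactly the automorphism $\alpha$.

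Finally I would compose: given $F=\alpha\tau$, first apply the morphism of inverse systems representing $\tau$ (indices $n\mapsto n$), then the one representing $\alpha$ (indices $n\mapsto a_n$); the composite is again a morphism of inverse systems $\{(C_{a_n},f_{a_n,a_{n-1}})\}_n\to\{(C_n,f_{n,n-1})\}_n$ in the sense of (\ref{IsomInvSystems}), and by construction it induces $F$ on $\mathcal{H}_C$. The main obstacle I anticipate is purely bookkeeping: keeping the two index sequences (the identity one for $\tau$ and the sparser $\{a_n\}$ for $\alpha$) aligned when composing, and making sure the holomorphic maps $F_n$ are honestly defined on $C_{a_n}$ rather than only on $\mathbb{H}/\Gamma_{a_n}$ — this is where the identifications in (\ref{EquivalenceDiagram1}) and the base-point conventions must be used carefully. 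There is no conceptual difficulty beyond Lemma \ref{lemma}; the content is that an element of the commensurator, which a priori is only an abstract automorphism of the solenoid, can be realised levelwise by genuine algebraic maps because conjugation by $\alpha$ moves finite-index subgroups to finite-index subgroups in a way controlled by the cofinal sequence.
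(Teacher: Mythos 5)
Your proposal is correct and follows essentially the same route as the paper: decompose $F=\alpha\tau$ via (\ref{aut(H)}), observe that $\tau\in\widehat{\Gamma}$ is already levelwise a compatible family of deck transformations, realise $\alpha\in\com(\Gamma)$ levelwise using the cofinal sequence from Lemma \ref{lemma}(2) and the maps $z\mapsto\alpha(z)$ on $\mathbb{H}/\Gamma_{a_n}\to\mathbb{H}/\Gamma_n$, and compose. The only point the paper makes slightly more explicit is that the morphism of inverse systems built from $\alpha$ agrees with $\alpha$ on the base leaf and hence, by continuity and density of the leaf, on all of $\mathcal{H}_C$ — which is the justification behind your "one checks directly" step.
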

 \begin{proof}
We deal with the two different kinds of automorphisms separately.

1) $F=\tau=(\tau_{n})_{n} \in
	\widehat{\Gamma}   =\mathop{\varprojlim}\limits   \Gamma/\Gamma_{n}.$

	From the expression  of the automorphism   $\tau$ on the model of the solenoid 
	$\mathbb{H} \times_{\Gamma} \widehat{\Gamma}$ given in  (\ref{actionGammahat}) and the identification between the two models of the solenoid  made in (\ref{defsolenoid3}) one infers that in this case $F$ is the automorphism induced by the morphism of inverse limits
	$
	\{\tau_{n}\}_n:\{(\mathbb{H}/\Gamma_{n}, \pi_n)\}_n \to \{(\mathbb{H}/\Gamma_{n}, \pi_n)\}_n
	$
	or its corresponding algebraic version $ \{F_{n}\}_{n}: \{C_n, f_n\}_n\to
	\{C_n, f_n\}_n .$

2) $F= \alpha \in \com(\Gamma).$

Let  $\{\Gamma_{n_{\alpha}}\}$
 the co-final family of subgroups of $\Gamma$
 whose existence is  guaranteed  in part 2) of Lemma \ref{lemma} and consider the   holomorphic surjections 
$A_n:\mathbb{H}/\Gamma_{n_{\alpha}} \to \mathbb{H}/\Gamma_{n}$
defined by the following composition of maps
 \begin{center}
 $\begin{array}{ccccc}
\mathbb{H}/\Gamma_{n_{\alpha}} & \stackrel{\alpha }{\longrightarrow} & \mathbb{H}/\alpha\Gamma_{n_{\alpha}}\alpha^{-1} & \longrightarrow & \mathbb{H}/\Gamma_{n}  \\
z & \longrightarrow & \alpha(z) & \longrightarrow & \alpha(z)
\end{array}$
\end{center}
Clearly, these maps define a morphism of inverse systems  
$\{A_n\}:\{(\mathbb{H}/\Gamma_{n_{\alpha}},\pi_{n_{\alpha}}) \} \to 
\{(\mathbb{H}/\Gamma_{n}, \pi_{n})\}$  which yields a holomorphic map  $A:\mathcal{H}_{\Gamma} \to\mathcal{H}_{\Gamma} $ which coincides with $\alpha$ on the base leaf and, by continuity, on the whole solenoid. 
Now we only need to replace the holomorphic maps $A_n:\mathbb{H}/\Gamma_{\alpha_n} \to \mathbb{H}/\Gamma_{n}$ and $\pi_{n,n-1}:\mathbb{H}/\Gamma_{n} \to \mathbb{H}/\Gamma_{n-1}$ by their corresponding rational maps $F_n:C_{\alpha_n} \to C_{n}$ and  $f_{n,n+1}:C_{n} \to C_{n-1}.$

3) Finally,  for an arbitrary $F\in \aut(\mathcal{H}_{C})$
we can use (\ref{aut(H)}) 
to write $F$ in the form $F=\alpha \circ \tau$ and then the conclusion follows by composition of the two previous cases. 
	 \end{proof}

 \begin{proposition} \label{autHSigma} 
(1) For any $\sigma \in Gal( {\C}/\Q)$  the bijection $ \sigma: \mathcal{H}_{C} \to \mathcal{H}_{C^{\sigma}} $ introduced at the beginning of this section induces
  a group isomorphism
$$
	\begin{array}{lcl}
	\aut(\mathcal{H}_{C}) &  \longrightarrow & \aut( \mathcal{H}_{C^{\sigma}} ) \\
	\ \  \ \ F & \longrightarrow & F^{\sigma}:=\sigma \circ F \circ \sigma^{-1}
\end{array}
$$
(2) This isomorphism maps $\pi_1^{alg}(C,p)  \equiv \widehat{ \Gamma}$ onto  $\pi_1^{alg}(C^{\sigma},p^{\sigma})  \equiv\widehat{\Gamma^{\sigma}}.$ \newline
(3) If the pair $(C,p)$  is defined over a number field
one can replace in the above statements the group 
$Gal( {\C}/\Q)$ by  the absolute Galois group
   $Gal( \overline{\Q}/\Q).$    
 \end{proposition}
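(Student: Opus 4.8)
\emph{Plan of proof.} The plan is to exploit the \emph{algebraic} description of the automorphisms of $\mathcal{H}_{C}$ supplied by Proposition \ref{Allarestandard}: although the map $\sigma\colon \mathcal{H}_{C}\to \mathcal{H}_{C^{\sigma}}$ is a wild set-theoretic bijection, conjugation by it will be harmless once we know that every $F\in\aut(\mathcal{H}_{C})$ is assembled out of \emph{rational} maps between algebraic curves, because Galois conjugation carries rational maps to rational maps and commutes with their composition and with evaluation at points.

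First I would fix, as in (\ref{LimitWithGammaN}), the co-final tower of pointed unramified Galois covers $f_{n}\colon (C_{n},p_{n})\to (C,p)$ attached to the standard characteristic subgroups $\Gamma_{n}\triangleleft\Gamma$, and observe that their $\sigma$-conjugates $f_{n}^{\sigma}\colon (C_{n}^{\sigma},p_{n}^{\sigma})\to (C^{\sigma},p^{\sigma})$ are again pointed unramified Galois covers, and that the family $\{(C_{n}^{\sigma},p_{n}^{\sigma})\}_{n}$ is co-final among all pointed unramified covers of $(C^{\sigma},p^{\sigma})$: a given such cover of $(C^{\sigma},p^{\sigma})$ becomes, after applying $\sigma^{-1}$, an unramified cover of $(C,p)$, hence is dominated by some $C_{n}$, and applying $\sigma$ back exhibits the domination by $C_{n}^{\sigma}$. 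Thus $\mathcal{H}_{C^{\sigma}}=\varprojlim_{n}C_{n}^{\sigma}$ and the bijection of the statement is $(q_{n})_{n}\mapsto (q_{n}^{\sigma})_{n}$, with $q_{n}\in C_{n}(\C)$ and $f_{n,n-1}(q_{n})=q_{n-1}$.

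For part (1): given $F\in\aut(\mathcal{H}_{C})$, Proposition \ref{Allarestandard} provides a morphism of inverse systems $\{F_{n}\colon C_{a_{n}}\to C_{n}\}_{n}$ inducing $F$, as in (\ref{IsomInvSystems}). Conjugating the coefficients of each rational map $F_{n}$ and of the bonding maps by $\sigma$ yields a morphism of inverse systems $\{F_{n}^{\sigma}\colon C_{a_{n}}^{\sigma}\to C_{n}^{\sigma}\}_{n}$ (the defining squares still commute, since $\sigma$ respects composition), which therefore induces a \emph{holomorphic} self-map of $\mathcal{H}_{C^{\sigma}}=\varprojlim C_{n}^{\sigma}$. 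Because evaluation of a rational map at a point commutes with $\sigma$, one has $F_{n}^{\sigma}(q^{\sigma})=F_{n}(q)^{\sigma}$ for every point $q$, and passing to the limit identifies this holomorphic self-map with $\sigma\circ F\circ\sigma^{-1}$. In particular $\sigma\circ F\circ\sigma^{-1}$ is a holomorphic bijection of $\mathcal{H}_{C^{\sigma}}$ (bijective as a composite of bijections, holomorphic by the displayed construction), so $F\mapsto F^{\sigma}:=\sigma\circ F\circ\sigma^{-1}$ is a well-defined map $\aut(\mathcal{H}_{C})\to\aut(\mathcal{H}_{C^{\sigma}})$; it is a homomorphism since $(F\circ G)^{\sigma}=F^{\sigma}\circ G^{\sigma}$, and running the same construction for $\sigma^{-1}$ and $C^{\sigma}$ (using $(C^{\sigma})^{\sigma^{-1}}=C$) exhibits $G\mapsto\sigma^{-1}\circ G\circ\sigma$ as its two-sided inverse, whence it is an isomorphism. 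For part (2), recall from (\ref{actionGammahat})--(\ref{defsolenoid3}) that $\tau=(\tau_{n})_{n}\in\widehat{\Gamma}=\varprojlim\aut(C_{n},f_{n})$ is induced by the morphism of inverse systems made of the deck transformations $\tau_{n}$ of $f_{n}$; its $\sigma$-conjugate is then induced by $\{\tau_{n}^{\sigma}\}_{n}$, and the relation $f_{n}\circ\tau_{n}=f_{n}$ conjugates to $f_{n}^{\sigma}\circ\tau_{n}^{\sigma}=f_{n}^{\sigma}$, so each $\tau_{n}^{\sigma}$ is a deck transformation of $f_{n}^{\sigma}$ and $\tau^{\sigma}\in\varprojlim\aut(C_{n}^{\sigma},f_{n}^{\sigma})=\pi_1^{alg}(C^{\sigma},p^{\sigma})\equiv\widehat{\Gamma^{\sigma}}$; symmetry in $\sigma^{-1}$ upgrades this to $F\mapsto F^{\sigma}$ mapping $\widehat{\Gamma}$ \emph{onto} $\widehat{\Gamma^{\sigma}}$.

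Finally, for part (3): if $(C,p)$ is defined over a number field, then so are all $C_{n}$ and all the rational maps $F_{n}$ (an unramified cover of a curve defined over $\overline{\Q}$ is itself defined over $\overline{\Q}$), hence the whole construction above — the curves $C_{n}^{\sigma}$, the morphisms $F_{n}^{\sigma}$, the induced isomorphism $\aut(\mathcal{H}_{C})\to\aut(\mathcal{H}_{C^{\sigma}})$ and its restriction to $\widehat{\Gamma}$ — depends only on $\sigma|_{\overline{\Q}}$; choosing for each $\bar\sigma\in Gal(\overline{\Q}/\Q)$ an arbitrary extension to $Gal(\C/\Q)$ therefore turns (1) and (2) into statements about $Gal(\overline{\Q}/\Q)$, independent of the extension. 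I expect the only genuinely non-formal input to be Proposition \ref{Allarestandard} together with the co-finality bookkeeping of the second paragraph: that is precisely what converts the merely set-theoretic map $\sigma\circ F\circ\sigma^{-1}$ into a holomorphic one, i.e. into an element of $\aut(\mathcal{H}_{C^{\sigma}})$; everything else is routine manipulation of inverse systems and of the compatibility of Galois conjugation with rational maps.
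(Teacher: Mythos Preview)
Your proof is correct and follows essentially the same approach as the paper's: both invoke Proposition \ref{Allarestandard} to write an arbitrary $F$ as a morphism of inverse systems of rational maps, conjugate each $F_n$ by $\sigma$ to obtain a morphism of the $\sigma$-conjugated inverse systems, note co-finality of $\{C_{a_n}^{\sigma}\}$ among covers of $(C^{\sigma},p^{\sigma})$, and specialize to deck transformations for part (2) and to curves defined over $\overline{\Q}$ for part (3). Your write-up is somewhat more explicit than the paper's (spelling out the co-finality argument via $\sigma^{-1}$, the homomorphism and inverse checks, and the ``onto'' part of (2)), but the strategy and the key input are identical.
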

     \begin{proof}
  (1) By Proposition \ref{Allarestandard} we can assume that  $F \in \aut(\mathcal{H}_{C})$ is given by a morphism of inductive limits of the form
 $$
  F=\{F_{n}\}_{n} :
 \Big\{  \Big(C_{a_n}, f_{a_n} \Big) \Big\}_{n} \to
 \Big\{ \Big(C_{n }, f_{n } \Big) \Big\}_{n}
 $$
	and then  $F^{\sigma}$ will be the automorphism of 
	$\mathcal{H}_{C^{\sigma}}$ induced by  
	 $$
  F^{\sigma}=\{F^{\sigma}_{n}\}_{n} :
 \Big\{  \Big(C_{a_n}^{\sigma}, f_{ a_n}^{\sigma} \Big) \Big\}_{n} \to
 \Big\{ \Big(C^{\sigma}_{n}, f^{\sigma}_{n } \Big) \Big\}_{n}$$
where $ F^{\sigma}_{n}=\sigma \circ F_{n} \circ \sigma^{-1}$. We point out that 
each map
$ F^{\sigma}_{n}:C_{a_n}^{\sigma}  \to  C_{n}^{\sigma}$
is a rational (i.e. holomorphic) map,  in fact the rational map
whose defining equations are obtained by applying $\sigma$ to the defining equations of the rational map  $F_{n}:C_{a_n}  \to C_{n}.$ 
We also observe that since $\{ (C_{a_n}, p_{a_n} ) \}_{n} $ is a co-final family of pointed coverings of $(C, p)$ the family 
 $\{ (C_{a_n}^{\sigma}, p_{a_n}^{\sigma} ) \}_n$ is a co-final family of coverings of $(C^{\sigma},p^{\sigma})$. We thus conclude that 
	 $\mathcal{H}_{C^{\sigma}}$ is the projective limit of the inverse system  
$\{ (C_{a_n}^{\sigma}, f_{ a_n}^{\sigma} ) \}_{n}	$ and that $F^{\sigma}$ induces an automorphism of  $\mathcal{H}_{C^{\sigma}}$.

 (2) Clearly, if in the discussion above $F \in \pi_1^{alg}(C,p)=  \mathop{\varprojlim}\limits 
      \aut(C_{n}, f_{n})$, 	then 
	 $F^{\sigma} \in \pi_1^{alg}(C^{\sigma},p^{\sigma})=  \mathop{\varprojlim}\limits 
      \aut(C_{n}^{\sigma}, f_{n}^{\sigma})$, 
 as stated.
	
	(3) This statement follows from the observation that if $(C,p)$ is defined over $\overline{\Q}$ then,  the covering pointed curves $(C_{n},p_{n})$ as well as the automorphisms $F_{n}$ in the  proof of the part (2) above are also defined over 
	$\overline{\Q}$ (see \cite{Go}) and so the Galois conjugates 
	$C_{n}^{\sigma},p_{n}^{\sigma}$ and 	$F_{n}^{\sigma}$ of $C_{n},p_{n}$ and 	$F_{n}$ make perfect sense when $\sigma \in Gal( \overline{\Q}/\Q).$ 
	\end{proof}
	
	We can now prove that the isomorphism class of the group 
$\overline{\com(\Gamma)}$ is preserved under Galois action.

  \begin{theorem} \label{mainth} 
	 Let $C$ be an algebraic curve uniformized by  a Fuchsian group $\Gamma$.  Then the action of a Galois element $\sigma$ on the solenoid $\mathcal{H}_{C}$ induces the following isomorphisms
	\begin{enumerate}
	 \item \  \  \  $\overline{\com(\Gamma)} \simeq \overline{\com(\Gamma^{\sigma})}$ \ \ \ \ \ \  (isomorphism of topological groups).
	\item \ \ \  $\dfrac{\overline{\com(\Gamma)}}{\widehat{\Gamma}} \simeq 
	\dfrac{\overline{\com(\Gamma^{\sigma})}}{\widehat{\Gamma^{\sigma}}} $ \ \ \ \ \ \ \ \ \  (isomorphism of sets of co-sets).
\item \ \ \  $ \dfrac{ \com(\Gamma)}{\Gamma} \simeq \dfrac{ \com(\Gamma^{\sigma})}{\Gamma^{\sigma}} $ \ \ \ \ \  \ \ \ (isomorphism of sets of co-sets).	
	\end{enumerate}
    \end{theorem}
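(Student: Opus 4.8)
The plan is to deduce all three isomorphisms from Proposition~\ref{autHSigma} together with Theorem~\ref{Isom}. The starting point is the group isomorphism $F\mapsto F^{\sigma}=\sigma\circ F\circ\sigma^{-1}$ from $\aut(\mathcal{H}_{C})$ onto $\aut(\mathcal{H}_{C^{\sigma}})$ established in Proposition~\ref{autHSigma}(1), where $\sigma:\mathcal{H}_{C}\to\mathcal{H}_{C^{\sigma}}$ is the conjugation bijection. By Theorem~\ref{Isom} we may identify $\aut(\mathcal{H}_{\Gamma})$ with $\overline{\com(\Gamma)}$ and likewise $\aut(\mathcal{H}_{\Gamma^{\sigma}})$ with $\overline{\com(\Gamma^{\sigma})}$, so this already gives an abstract group isomorphism $\overline{\com(\Gamma)}\simeq\overline{\com(\Gamma^{\sigma})}$. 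For part~(1) I must check it is a \emph{homeomorphism}. This is where the bulk of the work lies: the conjugation map $\sigma$ on solenoids is not continuous, so one cannot argue by transport of structure directly. Instead I would argue at the level of the subbases of neighbourhoods of the identity. Recall from Section~\ref{Belyaev} that the topology on $\overline{\com(\Gamma)}=\aut(\mathcal{H}_{\Gamma})$ has subbasic identity neighbourhoods $\{j(x\widehat{\Gamma}_{n}x^{-1}):x\in\com(\Gamma)\}_{n}$, equivalently (as shown in the proof of Theorem~\ref{Isom}) the sets $\widehat{\Gamma_{m}}$ are cofinal among them. The key point is that conjugation by $\sigma$ carries $\widehat{\Gamma}$ onto $\widehat{\Gamma^{\sigma}}$ (Proposition~\ref{autHSigma}(2)), and more precisely carries the filtration $\{\widehat{\Gamma_{n}}\}_{n}$ onto the filtration $\{\widehat{\Gamma^{\sigma}_{n}}\}_{n}$. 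This last assertion is the main technical obstacle: one must verify that the standard characteristic subgroup $\Gamma_{n}$ of $\Gamma$ (the intersection of all subgroups of index $\le n$, see~(\ref{defGammaN})) is sent by $\sigma$-conjugation to the corresponding subgroup $\Gamma^{\sigma}_{n}$ of $\Gamma^{\sigma}$; this follows because $\sigma$ induces an isomorphism of the covering towers $\{(C_{n},f_{n})\}_{n}$ and $\{(C^{\sigma}_{n},f^{\sigma}_{n})\}_{n}$ indexed by the same $n$, a consequence of Proposition~\ref{autHSigma}(3) and the fact that the index condition defining $\Gamma_{n}$ is purely combinatorial and hence Galois-stable. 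Granting this, $F\mapsto F^{\sigma}$ sends the subbasic set $\widehat{\Gamma_{m}}\subset\aut(\mathcal{H}_{C})$ bijectively onto $\widehat{\Gamma^{\sigma}_{m}}\subset\aut(\mathcal{H}_{C^{\sigma}})$, and symmetrically for $\sigma^{-1}$, so the isomorphism is bicontinuous.

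For part~(2), once we know that the isomorphism $\Phi:\overline{\com(\Gamma)}\to\overline{\com(\Gamma^{\sigma})}$, $\Phi(F)=F^{\sigma}$, restricts to an isomorphism $\widehat{\Gamma}\xrightarrow{\sim}\widehat{\Gamma^{\sigma}}$ by Proposition~\ref{autHSigma}(2), we immediately obtain the induced bijection of coset spaces
$$
\dfrac{\overline{\com(\Gamma)}}{\widehat{\Gamma}}\;\xrightarrow{\;\sim\;}\;\dfrac{\overline{\com(\Gamma^{\sigma})}}{\widehat{\Gamma^{\sigma}}},\qquad F\widehat{\Gamma}\longmapsto F^{\sigma}\widehat{\Gamma^{\sigma}},
$$
which is well defined and bijective precisely because $\Phi$ is a group isomorphism carrying the subgroup $\widehat{\Gamma}$ onto $\widehat{\Gamma^{\sigma}}$. (One should note that $\widehat{\Gamma}$ need not be normal in $\overline{\com(\Gamma)}$, so this is an isomorphism of $G$-sets for the left-translation action rather than of groups; the statement only claims a bijection of coset sets, so nothing more is needed.)

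For part~(3), I would invoke Proposition~\ref{ComHat}(4), which gives the canonical identification $\overline{\com(\Gamma)}/\widehat{\Gamma}\equiv\com(\Gamma)/\Gamma$ and likewise $\overline{\com(\Gamma^{\sigma})}/\widehat{\Gamma^{\sigma}}\equiv\com(\Gamma^{\sigma})/\Gamma^{\sigma}$. Composing these two identifications with the bijection of part~(2) yields the desired bijection $\com(\Gamma)/\Gamma\simeq\com(\Gamma^{\sigma})/\Gamma^{\sigma}$. Concretely, unwinding the identifications, a coset $\alpha\Gamma$ with $\alpha\in\com(\Gamma)$ is sent to $\alpha^{\sigma}\Gamma^{\sigma}$ where $\alpha^{\sigma}\in\com(\Gamma^{\sigma})$ is the element of $\com(\Gamma^{\sigma})$ represented by the automorphism $\sigma\circ\alpha\circ\sigma^{-1}$ via its action on the base leaf (using the remark in Section~\ref{Autsolenoids} that base-leaf-preserving automorphisms are exactly the elements of $\com$). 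The only subtlety to spell out is that the conjugate automorphism $\alpha^{\sigma}$ is again base-leaf preserving on $\mathcal{H}_{C^{\sigma}}$, hence genuinely lies in $\com(\Gamma^{\sigma})$ and not merely in $\aut(\mathcal{H}_{C^{\sigma}})$; but this is built into Proposition~\ref{autHSigma}(1)--(2), since $\Phi$ maps the complementary factor $\com(\Gamma)$ of the decomposition $\overline{\com(\Gamma)}=\widehat{\Gamma}\cdot\com(\Gamma)$ into $\aut(\mathcal{H}_{C^{\sigma}})$ compatibly with the analogous decomposition. The main obstacle throughout remains the topological statement in part~(1), i.e.\ the Galois-stability of the characteristic filtration; once that is in hand, parts~(2) and~(3) are formal.
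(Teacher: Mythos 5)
Your argument follows the paper's own route: Theorem \ref{Isom} combined with Proposition \ref{autHSigma}(1) gives the abstract isomorphism in (1), bicontinuity comes from the fact that conjugation by $\sigma$ carries the subbasis $\{\widehat{\Gamma_n}\}_n$ onto $\{\widehat{\Gamma^{\sigma}_n}\}_n$ (your observation that the index condition defining the characteristic subgroups is Galois-stable is the right justification, though the relevant input is Proposition \ref{autHSigma}(2) and the construction of the conjugate tower, not part (3) of that proposition), part (2) follows from Proposition \ref{autHSigma}(2), and part (3) from composing with the identifications of Proposition \ref{ComHat}(4). This is exactly the paper's proof.

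One side remark in your treatment of (3) is false and should be deleted: you claim that $\alpha^{\sigma}=\sigma\circ\alpha\circ\sigma^{-1}$ is again base-leaf preserving and that the isomorphism $\Phi$ carries the factor $\com(\Gamma)$ of the decomposition $\overline{\com(\Gamma)}=\widehat{\Gamma}\cdot\com(\Gamma)$ into $\com(\Gamma^{\sigma})$, attributing this to Proposition \ref{autHSigma}(1)--(2). That proposition establishes no such thing, and the paper explicitly warns, immediately after this theorem, that $\sigma:\mathcal{H}_{C}\to\mathcal{H}_{C^{\sigma}}$ does \emph{not} preserve the base leaf in general. If your claim were true it would give $\com(\Gamma)\simeq\com(\Gamma^{\sigma})$ unconditionally, whereas the paper only obtains this in the non-arithmetic case by a separate geometric argument (Theorem \ref{ThNonArit}). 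Fortunately the remark is not load-bearing: the abstract composition $\com(\Gamma)/\Gamma\equiv\overline{\com(\Gamma)}/\widehat{\Gamma}\to\overline{\com(\Gamma^{\sigma})}/\widehat{\Gamma^{\sigma}}\equiv\com(\Gamma^{\sigma})/\Gamma^{\sigma}$, which is the first sentence of your paragraph, already proves (3). The correct concrete description sends $\alpha\Gamma$ to $\beta\Gamma^{\sigma}$, where $\beta\in\com(\Gamma^{\sigma})$ is any element with $\beta^{-1}\Phi(\alpha)\in\widehat{\Gamma^{\sigma}}$; such a $\beta$ exists by Proposition \ref{ComHat}(3), but there is no reason for it to equal $\sigma\circ\alpha\circ\sigma^{-1}$.
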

		\begin{proof}
		
	(1) That $\sigma$ induces an isomorphism between these two groups   follows from a combination 	of Theorem  \ref{Isom}
	and the first part of  Proposition \ref{autHSigma}. The topological side follows from the second part of  Proposition \ref{autHSigma} which shows that this isomorphism sends the subbasis of neighbourhoods $\{\widehat{\Gamma}_{n}\}_{n}$ into the subbasis of neighbourhoods $\{\widehat{\Gamma^{\sigma}_{n}}\}_{n}$.
	
	(2) follows from the second part of Proposition 
	\ref{autHSigma}.
	
	(3) follows from the previous statement and Proposition \ref{ComHat}, part 4.
\end{proof}

  From here we can easily obtain the following result due to Kazhdan(\cite{Kaz})
  \begin{corollary}\label{kaz}
  $\Gamma$ is arithmetic if and only if   $ \Gamma^{\sigma}$ is arithmetic.
   \end{corollary}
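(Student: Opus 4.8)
The plan is to combine the Galois invariance of the coset space $\com(\Gamma)/\Gamma$ established in Theorem \ref{mainth}(3) with Margulis' theorem. The argument is essentially one line, but let me lay out the logic carefully.

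First I would recall the statement of Margulis' theorem as quoted in Section \ref{arithmetic}: for a finite volume Fuchsian group $\Gamma$, arithmeticity is equivalent to $\Gamma$ having infinite index in $\com(\Gamma)$, i.e. to $[\com(\Gamma):\Gamma]=\infty$. Since $\Gamma$ uniformises a compact Riemann surface of genus $\geq 2$ it is certainly of finite (indeed finite co-area) volume, and the same is true of $\Gamma^{\sigma}$ since the Galois action on curves is genus-preserving; so Margulis applies to both groups. Thus $\Gamma$ is arithmetic $\iff$ the set of cosets $\com(\Gamma)/\Gamma$ is infinite, and likewise $\Gamma^{\sigma}$ is arithmetic $\iff$ $\com(\Gamma^{\sigma})/\Gamma^{\sigma}$ is infinite.

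Then I would invoke Theorem \ref{mainth}(3), which gives a bijection $\com(\Gamma)/\Gamma \simeq \com(\Gamma^{\sigma})/\Gamma^{\sigma}$ of sets of cosets. A bijection preserves cardinality, so one side is infinite precisely when the other is. Chaining the three equivalences gives that $\Gamma$ is arithmetic if and only if $\Gamma^{\sigma}$ is, which is exactly the assertion of the corollary (noting that $\Gamma^{\sigma}$ uniformises $C^{\sigma}$, so this is the statement that $C$ is uniformised by an arithmetic group iff $C^{\sigma}$ is).

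There is essentially no obstacle here; the work has all been done in establishing Theorem \ref{mainth}. The only point deserving a word of care is making sure the finite-volume hypothesis of Margulis' theorem is met by both $\Gamma$ and $\Gamma^{\sigma}$ — but this is immediate since both are surface groups of the same genus $g\geq 2$. One could alternatively phrase the proof directly in terms of part (2) of Theorem \ref{mainth}, using $\overline{\com(\Gamma)}/\widehat{\Gamma}\simeq \overline{\com(\Gamma^{\sigma})}/\widehat{\Gamma^{\sigma}}$ together with Proposition \ref{ComHat}(4), but going through part (3) is the most transparent route.
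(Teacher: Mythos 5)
Your proof is correct and is exactly the paper's argument: the paper also deduces the corollary from part (3) of Theorem \ref{mainth} (the bijection $\com(\Gamma)/\Gamma \simeq \com(\Gamma^{\sigma})/\Gamma^{\sigma}$) combined with Margulis' characterization of arithmeticity via $[\com(\Gamma):\Gamma]=\infty$. Your additional remarks on the finite-volume hypothesis are a harmless elaboration of the same one-line proof.
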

   \begin{proof}
  The result follows from the third part of Theorem \ref{mainth} together with  Margulis'  theorem.
 \end{proof}
 
 The general result that complex varieties uniformised by arithmetic groups are defined over number fields is atributed to Baily and Borel \cite{BB}
	(although in their paper only the fact that they are quasi-projective varieties seems to be explicitly stated and perhaps  reference to the work of Shimura is needed). 	
	In the case of dimension $1$  we are concerned with here, we can use Corollary \ref{kaz} to give
the following simple proof.

\begin{proposition} \label{DefinedOverNumberField}
	Let $C$ be  an algebraic curve uniformised by an arithmetic group $\Gamma$. Then $C$ is defined over a number field.
	\end{proposition}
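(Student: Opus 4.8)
The plan is to exploit the rigidity of arithmetic Fuchsian groups together with Corollary \ref{kaz} and a counting argument. First I would recall that the moduli space $\mathcal{M}_g$ of compact Riemann surfaces of genus $g = g(C)$ is an algebraic variety defined over $\Q$ (in fact over $\Z$), and that the Galois group $Gal(\C/\Q)$ acts on the set $\mathcal{M}_g(\C)$ of its complex points in a way compatible with the $\Q$-structure: a point $[C] \in \mathcal{M}_g(\C)$ is defined over $\overline{\Q}$ precisely when its $Gal(\C/\Q)$-orbit is finite. So the statement "$C$ is defined over a number field" is equivalent to "the orbit of $[C]$ under $Gal(\C/\Q)$ is finite".

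Next I would observe that, by Corollary \ref{kaz}, every Galois conjugate $C^{\sigma}$ of $C$ is again uniformised by an arithmetic Fuchsian group $\Gamma^{\sigma}$. The key finiteness input is then the classical fact (going back to Borel, and a consequence of the description of arithmetic groups recalled in \ref{arithmetic}) that \emph{for fixed genus $g$, there are only finitely many conjugacy classes of arithmetic Fuchsian surface groups of genus $g$} — equivalently, only finitely many isomorphism classes of arithmetic Riemann surfaces of genus $g$. This holds because an arithmetic surface group of genus $g$ has covolume $4\pi(g-1)$, and Borel's theorem bounds the covolume of a maximal arithmetic Fuchsian group from below in terms of the discriminant data of its defining quaternion algebra $(k,A)$; only finitely many pairs $(k,A)$ can arise with a maximal group of covolume $\le 4\pi(g-1)$, and each such maximal group contains only finitely many subgroups of the relevant finite index. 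Hence the set $\{[C^{\sigma}] : \sigma \in Gal(\C/\Q)\}$ is contained in this finite set, so the orbit of $[C]$ is finite, and therefore $C$ is defined over $\overline{\Q}$, i.e.\ over a number field.

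The steps in order are: (i) reduce "defined over a number field" to "finite $Gal(\C/\Q)$-orbit in $\mathcal{M}_g$", using the $\Q$-rationality of the moduli functor; (ii) apply Corollary \ref{kaz} to conclude each conjugate surface is arithmetic of the same genus; (iii) invoke the finiteness of arithmetic Riemann surfaces of bounded genus (via Borel's volume bounds on the quaternion data $k\Gamma$, $A\Gamma$) to conclude the orbit is finite; (iv) conclude. I expect step (iii) to be the main obstacle to present cleanly: one must either cite Borel's finiteness theorem on arithmetic groups of bounded covolume or, since the relevant machinery of invariant quaternion algebras is already set up in \ref{arithmetic}, give a short direct argument that $k\Gamma$ has bounded degree and discriminant and $A\Gamma$ bounded ramification once $\operatorname{covol}(\Gamma) = 4\pi(g-1)$ is fixed. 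Step (i) is standard but worth a sentence, since the compatibility of the Galois action on $\mathcal{H}_C$ with the usual Galois action on the moduli point must be acknowledged.
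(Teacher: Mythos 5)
Your proposal is correct and follows essentially the same route as the paper: apply Corollary \ref{kaz} to see every conjugate $C^{\sigma}$ is arithmetic, invoke the finiteness of arithmetic surface groups of a given genus (the paper cites Takeuchi rather than Borel's covolume bounds, but it is the same finiteness statement), and conclude via the standard criterion that a curve with finite $Gal(\C/\Q)$-orbit is defined over a number field (the paper cites \cite{Go} for this last step rather than spelling out the moduli-space argument).
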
 
		\begin{proof}
	Let $\sigma \in Gal(\mathbb{C})$. By   Corollary \ref{kaz} the group $\Gamma^\sigma$ uniformising the algebraic curve $C^\sigma$ is also  arithmetic. Now, by a theorem of Takeuchi (\cite{Ta2}, Theorem 2.1)
there are only finitely
many arithmetic surface  groups of any given genus. Therefore, as $\sigma$ varies in $Gal(\mathbb{C})$,   only finitely many isomorphism classes of curves $C^\sigma$ are obtained. This means that $C$ is defined over a number field (see \cite{Go}).
	\end{proof}

 Unfortunately,   the bijections 
	$\sigma:\mathcal{H}_{C}  \rightarrow   \mathcal{H}_{C^{\sigma}}$ will   not  preserve the base leaf in general and so there is no a priori reason why the isomorphism $\overline{\com(\Gamma)} \simeq \overline{\com(\Gamma^{\sigma})}$ above should preserve any properties of the commensurator. However we will  show that:
\begin{enumerate}
\item The periods of $\com(\Gamma)$ are Galois invariant, and 
\item  In the non-arithmetic case  even the isomorphism class of the group 
$\com(\Gamma)$  is also   Galois invariant.
\end{enumerate}
  The proof of 1) will be carried out in Section \ref{AritGroups} which is devoted to arithmetic groups. The proof of 2) 
	 is the content of the next subsection.

 \subsection{The non-arithmetic case}

We next show that    for non-arithmetic groups the isomorphism class of the group 
$\com(\Gamma)$  is also   Galois invariant.
 In order to do that we need to recall the notion of \textit{semiregular  (or uniform)} covers. These are coverings of compact Riemann surfaces $f:C\to C'$ such that all points of $C$ within any given fibre have the same multiplicity. This is equivalent to saying
that $f:C\to C'$ corresponds  to the projection
 $\mathbb{H}/\Gamma \to \mathbb{H}/\Gamma'$ induced by an inclusion of $\Gamma$ in another Fuchsian group $\Gamma'$ (see e.g. \cite{GG}). Note that $\Gamma'$ will be the uniformising group of $C'$ only when $f$ is unramified. Otherwise it will be a \emph{Fuchsian group of signature} $(g'; m_1,\cdots,m_r)$ where $g'$ is the genus of $C'$ and $m_1,\cdots,m_r$ the multiplicities of the branching values $x'_1,\cdots,x'_r\in C'$ of the morphism $f.$ 
We will also need the following   straightforward implication of Margulis'  theorem.
 
\begin{lemma} \label{Highest}
\begin{enumerate}
\item Let $\Gamma$ be a non-arithmetic  Fuchsian surface group.  
 Then \newline $\com(\Gamma)$ is the largest  Fuchsian group containing $\Gamma.$ 
\item Let $C$ be an algebraic curve uniformised by a non-arithmetic Fuchsian group $\Gamma$. Then the uniform covering corresponding to the obvious projection $\mathbb{H}/\Gamma \to \mathbb{H}/\com(\Gamma)$ can be recognised as the  one  of highest degree among all uniform coverings   $f:C\to C'$ with source  $C$   and arbitrary target $C'.$
 \end{enumerate}
\end{lemma}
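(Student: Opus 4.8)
The plan is to derive both statements directly from Margulis' theorem together with the elementary fact that the commensurator contains every Fuchsian group commensurable with $\Gamma$. For part (1), the key point is that a non-arithmetic surface group $\Gamma$ has $[\com(\Gamma):\Gamma]<\infty$ by Margulis' theorem, so $\com(\Gamma)$ is itself a finite-volume (indeed co-compact, since $\Gamma$ is co-compact) Fuchsian group. First I would show $\com(\Gamma)$ is a Fuchsian group: it is a subgroup of $PSL_2(\mathbb{R})$ containing $\Gamma$ with finite index, hence discrete and of finite co-volume. Next, if $\Gamma'$ is any Fuchsian group with $\Gamma<\Gamma'$, then $\Gamma'$ normalises — in the weak, commensurability sense — $\Gamma$: for any $g\in\Gamma'$, $g\Gamma g^{-1}$ and $\Gamma$ are both finite-index subgroups of $\Gamma'$ (the latter because $[\Gamma':\Gamma]<\infty$, which holds because $\Gamma'$ is a finite-volume Fuchsian group containing the co-compact lattice $\Gamma$), hence $g\Gamma g^{-1}\cap\Gamma$ has finite index in both, so $g\in\com(\Gamma)$. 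Therefore $\Gamma'<\com(\Gamma)$, which is exactly the assertion that $\com(\Gamma)$ is the largest Fuchsian group containing $\Gamma$.

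For part (2), I would translate the statement about uniform coverings into the language of Fuchsian groups using the dictionary recalled just before the lemma: a uniform covering $f:C\to C'$ with source $C=\mathbb{H}/\Gamma$ corresponds precisely to an inclusion $\Gamma<\Gamma'$ where $\Gamma'$ is a Fuchsian group (of some signature $(g';m_1,\dots,m_r)$), and the degree of the covering equals the index $[\Gamma':\Gamma]$. Thus the uniform covering of highest degree among all uniform coverings with source $C$ corresponds to the Fuchsian group $\Gamma'\supset\Gamma$ of largest index, equivalently (since $\mathrm{vol}(\mathbb{H}/\Gamma)=[\Gamma':\Gamma]\,\mathrm{vol}(\mathbb{H}/\Gamma')$) of smallest co-volume. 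By part (1), every such $\Gamma'$ is contained in $\com(\Gamma)$, so $[\Gamma':\Gamma]\le[\com(\Gamma):\Gamma]$, with equality realised by $\Gamma'=\com(\Gamma)$ itself. Hence the projection $\mathbb{H}/\Gamma\to\mathbb{H}/\com(\Gamma)$ is the uniform covering of highest degree, as claimed.

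The only point requiring a little care — and the one I expect to be the main (minor) obstacle — is the justification that any Fuchsian group $\Gamma'$ containing $\Gamma$ automatically contains it with finite index. Since $\Gamma$ is co-compact and $\Gamma'$ is discrete with $\Gamma<\Gamma'$, the quotient $\mathbb{H}/\Gamma'$ is a compact orbifold and the covering $\mathbb{H}/\Gamma\to\mathbb{H}/\Gamma'$ has degree $[\Gamma':\Gamma]=\mathrm{vol}(\mathbb{H}/\Gamma)/\mathrm{vol}(\mathbb{H}/\Gamma')<\infty$; this is where co-compactness of the surface group $\Gamma$ is used. Granting this, both parts are immediate from Margulis' theorem, and no further computation is needed.
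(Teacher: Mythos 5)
Your proposal is correct and follows essentially the same route as the paper: Margulis' theorem makes $\com(\Gamma)$ a Fuchsian group in the non-arithmetic case, any Fuchsian overgroup contains $\Gamma$ with finite index by a covolume argument and hence lies in the commensurator, and part (2) is just the translation of this maximality into the language of uniform coverings. Your version merely spells out the finite-index and commensuration steps that the paper leaves implicit.
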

\begin{proof}
(1) By Margulis' theorem $\com(\Gamma)$  is a Fuchsian group.  Now, if a group
 $\Gamma_1$ containing $\Gamma $ is Fuchsian   then $\Gamma$ must have finite index in  $\Gamma_1.$ Thus, we have 
$\Gamma <\Gamma_1<\com(\Gamma_1)=\com(\Gamma).$ 
\newline
(2) Follows directly from (1). 
\end{proof}

 \begin{theorem} \label{ThNonArit} 
	 Let $C$ be an algebraic curve uniformized by  a non-arithmetic  Fuchsian group $\Gamma.$ Then  
	  $$\com(\Gamma) \simeq \com(\Gamma^{\sigma}) \hspace{1cm} \text{ (isomorphic as abstract groups)}$$  
    \end{theorem}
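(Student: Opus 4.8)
The plan is to leverage the characterisation of $\com(\Gamma)$ for non-arithmetic $\Gamma$ provided by Lemma \ref{Highest}: it is the \emph{largest Fuchsian group} containing $\Gamma$, and the corresponding covering $\mathbb H/\Gamma \to \mathbb H/\com(\Gamma)$ is the unique uniform (semiregular) covering of highest degree with source $C$. The key point is that this maximal uniform covering admits a purely \emph{algebraic-geometric} description — it is recognisable from $C$ alone, without reference to the embedding of $\Gamma$ in $PSL_2(\mathbb R)$ — and algebraic-geometric data is manifestly respected by Galois conjugation. So first I would argue that if $f:C\to C'$ is the highest-degree uniform covering of $C$, then $f^{\sigma}:C^{\sigma}\to C'^{\sigma}$ is again a uniform covering (the multiplicity of a point in a fibre is a local algebraic invariant, preserved by $\sigma$), of the same degree. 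Conversely any uniform covering of $C^{\sigma}$ pulls back under $\sigma^{-1}$ to one of $C$ of equal degree. Hence $\sigma$ sets up a degree-preserving bijection between uniform coverings of $C$ and of $C^{\sigma}$, and in particular the highest-degree one of $C$ goes to the highest-degree one of $C^{\sigma}$.

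Next I would use the fact, recalled just before Lemma \ref{Highest}, that a uniform covering $f:C\to C'$ corresponds to an inclusion of Fuchsian groups $\Gamma<\Gamma'$, where $\Gamma'$ is a Fuchsian group of signature $(g';m_1,\dots,m_r)$ with $g'$ the genus of $C'$ and $m_1,\dots,m_r$ the branching multiplicities of $f$. For the highest-degree uniform covering of $C$ this group $\Gamma'$ is precisely $\com(\Gamma)$ by Lemma \ref{Highest}(2); applying the same to $C^{\sigma}$, the highest-degree uniform covering of $C^{\sigma}$ corresponds to $\com(\Gamma^{\sigma})$ (using Corollary \ref{kaz} to know $\Gamma^{\sigma}$ is again non-arithmetic, so that Lemma \ref{Highest} applies to it). Now the abstract isomorphism type of a cocompact Fuchsian group of signature $(g';m_1,\dots,m_r)$ is determined by the tuple $(g';m_1,\dots,m_r)$ via the standard presentation with generators $a_1,b_1,\dots,a_{g'},b_{g'},x_1,\dots,x_r$ and relations $x_i^{m_i}=1$, $\prod[a_i,b_i]\prod x_j=1$. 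Since $\sigma$ preserves the genus $g'$ of $C'$ (the Galois action is genus-preserving in dimension one) and, by the first paragraph, preserves the branching data — more precisely, the $C'$ of the top covering goes to the $C'^{\sigma}$ of the top covering and the branch multiplicities are permuted among themselves — the two groups $\com(\Gamma)$ and $\com(\Gamma^{\sigma})$ have the same signature and are therefore isomorphic as abstract groups.

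I expect the main obstacle to be the bookkeeping needed to make ``$\sigma$ preserves the branching data'' precise and to see that it really pins down the signature. Two things need care: (i) one must check that Galois conjugation sends the \emph{set} of branch values $\{x'_1,\dots,x'_r\}\subset C'$ to the set of branch values of $f^{\sigma}$ on $C'^{\sigma}$ with the \emph{same list of multiplicities} (possibly reordered) — this is where the locality of ramification indices and the compatibility of $\sigma$ with the structure sheaf is invoked; (ii) one must confirm that the ``highest degree'' uniform covering is genuinely unique (not just its degree), so that the bijection of the first paragraph matches \emph{the} top covering of $C$ with \emph{the} top covering of $C^{\sigma}$ — this uniqueness is exactly the content of Lemma \ref{Highest}(2), since $\com(\Gamma)$ is a single well-defined group. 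A subsidiary point to verify is the cocompact case versus the case with cusps: here $C$ is compact of genus $\ge 2$, and one should note that $\com(\Gamma)$, being Fuchsian and containing the cocompact $\Gamma$ with finite index, is itself cocompact, so its signature has no parabolic entries and the presentation above applies verbatim. Once these points are in place the isomorphism $\com(\Gamma)\simeq\com(\Gamma^{\sigma})$ follows formally.
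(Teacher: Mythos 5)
Your proposal is correct and follows essentially the same route as the paper: identify the inclusion $\Gamma<\com(\Gamma)$ with the highest-degree uniform covering of $C$ via Lemma \ref{Highest}, note that Galois conjugation preserves uniform coverings together with their degree and branching data (using Corollary \ref{kaz} to apply the lemma to $\Gamma^{\sigma}$), and conclude that $\com(\Gamma)$ and $\com(\Gamma^{\sigma})$ are Fuchsian groups of the same signature, hence abstractly isomorphic. The extra care you take over the uniqueness of the top covering and the preservation of branch multiplicities is exactly the bookkeeping the paper leaves implicit.
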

  \begin{proof}  
Let   $f:C\to C'$ be the uniform cover of highest degree given by Lemma \ref{Highest}.
	Then,  the Galois conjugate covering  $f^{\sigma}:C^{\sigma}\to C'^{\sigma}$ is also a uniform cover of highest degree. By Corollary \ref{kaz}  the group
	$\Gamma^{\sigma}$ is also non-arithmetic  and by  Lemma \ref{Highest} these two coverings correspond to the inclusions $\Gamma<\com(\Gamma)$ and 	$\Gamma^{\sigma}<\com(\Gamma^{\sigma})$ respectively. Moreover, 
	suppose that  $C'$  has genus $g'$ and $f$ has 
	$r$ branching values   with multiplicities  $m_1,\cdots,m_r$, then  the same holds for 
	$C'^{\sigma}$   and $f^{\sigma}$. This means that the groups $\com(\Gamma)$ and $\com(\Gamma^{\sigma})$ are both Fuchsian groups with the same signature  $(g'; m_1,\cdots,m_r)$ and therefore  isomorphic.
\end{proof}

\section{Arithmetic groups}  \label{AritGroups}
  In this final section we will study more closely the effect of 
Galois action 
on the invariant quaternion algebra $(k\Gamma,A\Gamma)$ of  an arithmetic Fuchsian surface  group $\Gamma$, that is,  an arithmetic Fuchsian   groups which uniformises an   algebraic curve $C\cong \HH/\Gamma$.
 In view of Proposition \ref{DefinedOverNumberField}
	we can --and we will--
	  replace the group $Gal(\mathbb{C}/\mathbb{Q})$ with the absolute Galois group
 $Gal(\overline{\mathbb{Q}}/\mathbb{Q}).$ 
We have  seen in Corollary\ref{kaz} that for any 
$\sigma\in \Gal(\overline{\mathbb{Q}}/\mathbb Q)$
the Galois conjugate curve $C^{\sigma}$ is isomorphic to a Riemann surface $\HH/\Gamma^{\sigma}$ where $\Gamma^{\sigma}$ is again arithmetic.
We would like to know how the quaternion algebras $(k\Gamma,A\Gamma)$ and $(k\Gamma^{\sigma}, A\Gamma^{\sigma})$ are related.

  This relation has been described  by Doi and Naganuma in \cite{doinaganuma} 
	(see also \cite{milnesuh}). In order to present their results we need to recall some facts from the theory of algebras over number fields. 

\subsection{Classification of quaternion algebras over number fields}\label{quatalgnf}

Let $k$ be a totally real number field. The embeddings $\tau\in \Hom(k,\mathbb C)(=\Hom(k,\mathbb R))$ are called \textit{infinite or archimedean places of $k$}. As already mentioned, the quaternion algebra $A=\left(\frac{a,b}{k}\right)$ is called \textit{unramified at the infinite place $\tau$} if
  $A^{\tau}\otimes_{k^{\tau}}\mathbb R$ 
is isomorphic to $M_2(\mathbb R)$, where 
$A^{\tau}=\left(\frac{\tau(a),\tau(b)}{k^{\tau}}\right)$.
 Otherwise (i.e. if   $A^{\tau}\otimes_{k^{\tau}}\mathbb R\cong \textbf{H}_{\mathbb R}$) $A$ is called \textit{ramified at $\tau$}. We denote by  $Ram_{\infty}A$ the set of all infinite places at which $A$ is ramified. Our condition on $A$ is that $A$ is unramifed exactly at $\tau=id$. Let $R_k$ denote the ring of integers in $k$ and let $\mathfrak p$ be a prime ideal in $R_k$. Then $\mathfrak p$ defines a non-archimedean absolute value on $k$, which is unique up to equivalence. The prime ideals in $R_k$ are also called \textit{finite or non-archimedean places of $k$}. Let $k_{\mathfrak p}$ denote the completion of $k$ with respect to this absolute value and $\tau_{\mathfrak p}: k\rightarrow k_{\mathfrak p}$ be the corresponding embedding. We can consider the quaternion algebra $$
A^{(\tau_{\mathfrak p})}=A\otimes_{k} k_{\mathfrak p}=\left( \frac{\tau_{\mathfrak p}(a),\tau_{\mathfrak p}(b)}{k_{\mathfrak p}}\right)
$$ 
over $k_{\mathfrak p}$. There are two possibilities for $A^{(\tau_{\mathfrak p})}$: either $A^{(\tau_{\mathfrak p})}\cong M_2(k_{\mathfrak p})$ or $A^{(\tau_{\mathfrak p})}$ is a division algebra which is then uniquely determined up to isomorphism. In the first case we say that \textit{$A$ unramified at the finite place $\mathfrak p$} whereas in the second case we call $A$ \textit{ramified at the finite place $\mathfrak p$}. 
By  $Ram_fA$ we will refer to the set of all finite places at which $A$ is ramified. We will further set $Ram(A)= Ram_{\infty}A\cup Ram_{f}A.$

Finally,  we note that given a number field $k$ and an automorphism  
$\sigma\in \Gal(\overline{\mathbb{Q}}/\mathbb Q)$, Galois conjugation  defines
an obvious  ring isomorphism between $R_k$ and $R_{k^{\sigma}}$ such that the rule 
$$\mathfrak p \to \mathfrak p^{\sigma}$$
provides a bijection between 
 prime ideals  of $R_k$ and  
prime ideals of  $ R_{k^{\sigma}}$ which induces a bijection between   
  $Ram_fA$ and  $(Ram_fA)^{\sigma}:=
	\{\mathfrak p^{\sigma}:\mathfrak p \in Ram_fA \}$. We observe in passing that $(Ram_fA)^{\sigma}=
	Ram_fA^{\sigma}.$

The following facts are well-known (see e.g. \cite{MR}, 
  Theorem 7.3.6, page 236):
\begin{enumerate}
\item $Ram(A)$ is a finite set of even cardinality
\item   Let $T$ be a finite set of  (archimedean and/or non-archimedean) places of $k$ with even cardinality, 
then there exists a quaternion algebra $A$ over $k$ such that $Ram(A)=T$. This quaternion algebra is uniquely determined up to isomorphism. 
\end{enumerate}

  \subsection{ Doi-Naganuma's theorem}

	Let $(k,A)=(k\Gamma,A\Gamma)$ be the quaternion algebra associated with an arithmetic surface group and let $\sigma\in \Gal(\overline{\mathbb{Q}}/\mathbb Q)$. 
	The structure of the pair 
	$(k\Gamma^{\sigma},A\Gamma^{\sigma})$ has been described by Doi and Naganuma in  \cite{doinaganuma} (see also \cite{milnesuh}).

	We first recall some definitions.  
	Let $\mathcal O$ be a maximal order in $A$. We define the group of totally positive units of $\mathcal O$ as $\mathcal O^+=A^+\cap \mathcal O^{\ast}$. Let $\mathfrak a$ be an ideal in $R_k.$
	The \textit{principal congruence subgroup of level $\mathfrak a$}
	is  the  group
	$$G_{\mathcal O}^+(\mathfrak a):=\{x\in \mathcal O^+\mid x-1\in \mathfrak a\mathcal O\}< GL^+_2(\mathbb R).$$
	 	
We will write $\Gamma_{\mathcal O}^+(\mathfrak a)=
 \widetilde{P}(	G_{\mathcal O}^+(\mathfrak a))$ where
$  \widetilde{P} : GL^+_2(\mathbb R)\rightarrow PSL_2(\mathbb R)$ stands for the obvious projection map. 
   
\begin{theorem}\label{mainII}(\cite{doinaganuma}) Let $\Gamma_{\mathcal O}^+(\mathfrak a)$ be a torsion-free principal congruence subgroup in $A$ and let  $C$ be an algebraic curve  analytically isomorphic to the Riemann surface  $\mathbb H/\Gamma_{\mathcal O}^+(\mathfrak a)$. Then, for any  $\sigma\in \Gal(\overline{\mathbb{Q}}/\mathbb Q)$, the conjugate curve $C^{\sigma}$ is analytically isomorphic to $ \mathbb H/\Gamma_{\mathcal O'}^+(\mathfrak a^{\sigma})$
where $\mathcal O'$ is a maximal order in the quaternion algebra $(k',A')$ determined, up to isomorphism, by the three following properties:
\begin{enumerate}
\item $k'=k^{\sigma}$.
\item  $Ram_f A'=(Ram_f A)^{\sigma}.$   
\item The only archimedean place at which $A'$ is unramified is the one corresponding to the identity on $k'$.
\end{enumerate}
	\end{theorem}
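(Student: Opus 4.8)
The plan is to reduce the statement to a tractable case by the commensurability invariance of the quaternion algebra, and then track through the Galois action how each of the three defining data of the quaternion algebra transforms.

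First I would observe that it suffices to prove the theorem for \emph{one} particular torsion-free principal congruence subgroup $\Gamma_{\mathcal O}^+(\mathfrak a)$ inside a given commensurability class, since by the discussion in \ref{arithmetic} the invariant quaternion algebra depends only on the commensurability class, and $C^\sigma$ is determined up to isomorphism by its uniformising group up to $PSL_2(\mathbb R)$-conjugacy. So I would fix a maximal order $\mathcal O$ in $A=A\Gamma$ and a (sufficiently small) ideal $\mathfrak a$ so that $\Gamma_{\mathcal O}^+(\mathfrak a)$ is torsion-free; such $\mathfrak a$ exists by Selberg's lemma. The geometric heart of the matter is then: the Shimura-variety description realises $\mathbb H/\Gamma_{\mathcal O}^+(\mathfrak a)$ as (a component of) a moduli space of abelian varieties with quaternionic multiplication and level structure; the curve $C$ carrying this structure is defined over $\overline{\mathbb Q}$, and applying $\sigma$ conjugates the whole moduli problem. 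The key input from Doi--Naganuma \cite{doinaganuma} is precisely the identification of the conjugated moduli problem: the abelian varieties parametrised by $C^\sigma$ carry multiplication by a quaternion algebra $(k',A')$ which one must identify.

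Next I would identify $k'$ and $Ram_f A'$. For $k'$: the field $k=k\Gamma$ is generated over $\mathbb Q$ by traces of elements of $\Gamma^{(2)}$, and these traces are algebraic numbers that get permuted by $\sigma$; Doi--Naganuma show that the reflex-type data forces $k'=k^\sigma=\sigma(k)$, which is condition (1). For condition (2), the finite ramification set: $Ram_f A$ is encoded in the local invariants of $A$ at finite primes $\mathfrak p$ of $R_k$, and these are arithmetic data whose transport under $\sigma$ is governed by the bijection $\mathfrak p\mapsto \mathfrak p^\sigma$ described in \ref{quatalgnf}; Doi--Naganuma's analysis of the conjugated moduli problem yields $Ram_f A'=(Ram_f A)^\sigma=Ram_f A^\sigma$. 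Finally condition (3) is forced: $\Gamma_{\mathcal O'}^+(\mathfrak a^\sigma)$ must be a \emph{Fuchsian} group uniformising a curve, so $(k',A')$ must satisfy the Borel--Harish-Chandra conditions, i.e. $k'$ is totally real (automatic, since $k'=k^\sigma$ and being totally real is preserved under conjugation, as $\sigma$ permutes the archimedean places $\mathrm{Hom}(k,\mathbb R)$ with $\mathrm{Hom}(k^\sigma,\mathbb R)$) and $A'$ is unramified at exactly one archimedean place; that this distinguished place is the identity on $k'$ is a normalisation matching the way $C^\sigma$ is presented as $\mathbb H/\Gamma^\sigma$. Once $k'$, $Ram_f A'$ and the archimedean ramification are all pinned down, uniqueness of the quaternion algebra with prescribed ramification set (fact (2) of \ref{quatalgnf}, using that $Ram(A')$ has even cardinality) finishes the identification, and the level $\mathfrak a^\sigma$ comes along for free from transporting the level structure.

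The main obstacle is the geometric step: making precise that conjugating the curve $C$ by $\sigma$ corresponds on the moduli-theoretic side to conjugating the \emph{entire} PEL datum (abelian variety, quaternionic endomorphism ring, polarisation, level structure) by $\sigma$, and that this operation sends the order $\mathcal O\subset A$ to a maximal order $\mathcal O'$ in the algebra $(k',A')$ with the stated local behaviour. This is exactly what Doi--Naganuma establish (and what \cite{milnesuh} reworks in the language of Shimura varieties), so in the write-up I would cite \cite{doinaganuma} for this core computation and devote the explicit argument only to the bookkeeping: the commensurability reduction at the start, the compatibility of $\sigma$ with the prime-ideal bijection $\mathfrak p\mapsto\mathfrak p^\sigma$, the preservation of total reality, and the appeal to uniqueness of the quaternion algebra with a given even ramification set.
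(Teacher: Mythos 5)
The first thing to say is that the paper does not actually prove Theorem \ref{mainII}: it is imported verbatim from Doi--Naganuma, with \cite{doinaganuma} cited in the theorem header and no argument supplied. Your proposal likewise rests, for everything substantive (the identification of the conjugated moduli problem, hence conditions (1) and (2)), on citing \cite{doinaganuma} (and \cite{milnesuh}), and the pieces you supply yourself --- condition (3) being forced by the requirement that $\Gamma_{\mathcal O'}^+(\mathfrak a^{\sigma})$ be a cocompact Fuchsian group, i.e.\ by the Borel--Harish-Chandra conditions, together with the uniqueness of a quaternion algebra with prescribed even ramification set from \ref{quatalgnf} --- are correct and consistent with the surrounding discussion in the paper. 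In that sense your treatment is the same as the paper's: a citation with bookkeeping around it.

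The one piece of your own scaffolding that does not work is the opening reduction. You claim it suffices to prove the theorem for a single torsion-free principal congruence subgroup in a given commensurability class, on the grounds that the invariant quaternion algebra only depends on the commensurability class. But the conclusion of Theorem \ref{mainII} is strictly finer than a statement about commensurability classes: it asserts that the uniformising group of $C^{\sigma}$ is \emph{exactly} a principal congruence subgroup of level $\mathfrak a^{\sigma}$ in a maximal order of $(k',A')$, for the given $\mathfrak a$. Knowing this for one level $\mathfrak a_0$ only tells you that the uniformising groups of the other conjugate curves lie in the same commensurability class, i.e.\ are arithmetic groups of $(k',A')$; it does not recover the level structure $\mathfrak a\mapsto\mathfrak a^{\sigma}$ for other $\mathfrak a$. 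Indeed, the commensurability-invariance argument you invoke is precisely how the paper deduces the coarser Theorem \ref{mainI} (which concerns only the invariant quaternion algebra of an arbitrary arithmetic surface group) \emph{from} Theorem \ref{mainII}, not a step one can use to reduce Theorem \ref{mainII} to a special case. Since your plan in any case runs the full moduli-theoretic argument of \cite{doinaganuma} for the given level, this slip is harmless to the overall structure, but the reduction should be deleted or rephrased.
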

This implies the following

\begin{theorem}\label{mainI}
	Let $\Gamma$ be an arithmetic Fuchsian surface group and let $\sigma\in \Gal(\overline{\mathbb{Q}}/\mathbb Q)$.
	The associated quaternion algebra  $(k\Gamma^{\sigma},A\Gamma^{\sigma})$ is
determined, up to isomorphism, by the  following three properties:
	 \begin{enumerate}
\item $k\Gamma^{\sigma}=(k\Gamma)^{\sigma}$.
\item   $Ram_fA\Gamma^{\sigma}=(Ram_f A\Gamma)^{\sigma}.$ 
\item The only archimedean place at which $A\Gamma^{\sigma}$ is unramified is the one corresponding to the identity on $k\Gamma^{\sigma}$.
\end{enumerate}
 \end{theorem}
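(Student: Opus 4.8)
The plan is to derive Theorem \ref{mainI} from Theorem \ref{mainII} by reducing the case of an arbitrary arithmetic surface group $\Gamma$ to the case of a torsion-free principal congruence subgroup, using the commensurability invariance of the pair $(k\Gamma, A\Gamma)$ together with the Galois-equivariant description of solenoids and their automorphism groups established in the previous sections. The key point is that $(k\Gamma, A\Gamma)$ depends only on the commensurability class of $\Gamma$, and that commensurability classes of arithmetic Fuchsian groups correspond bijectively to quaternion algebras $(k,A)$ satisfying the Borel--Harish-Chandra conditions; so it suffices to exhibit, inside the commensurability class of $\Gamma$, one group to which Theorem \ref{mainII} directly applies, and to control how Galois conjugation moves it.

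First I would fix a maximal order $\mathcal O$ in $A\Gamma$ and, by Selberg's lemma, choose an ideal $\mathfrak a \subset R_{k\Gamma}$ with $\Gamma_{\mathcal O}^+(\mathfrak a)$ torsion-free; this group is commensurable with $\Gamma$, hence has the same invariant quaternion algebra $(k\Gamma, A\Gamma)$, and it uniformises some algebraic curve $C_0 \cong \mathbb H/\Gamma_{\mathcal O}^+(\mathfrak a)$. Next I would argue that $C$ and $C_0$ are isogenous in a sense that is preserved by Galois conjugation: concretely, both $C$ and $C_0$ admit a common finite étale cover $C_1 \to C$, $C_1 \to C_0$ corresponding to a common finite-index subgroup $\Gamma \cap \Gamma_{\mathcal O}^+(\mathfrak a)$ (after passing to a torsion-free finite-index subgroup of this intersection). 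Since $\sigma$ acts on étale covers (Proposition \ref{autHSigma}, or simply the fact that finite covers and their defining equations are Galois-conjugated), $C^{\sigma}$ and $C_0^{\sigma}$ again share a common finite étale cover, so the uniformising groups $\Gamma^{\sigma}$ and $(\Gamma_{\mathcal O}^+(\mathfrak a))^{\sigma}$ are commensurable in $PSL_2(\mathbb R)$. Consequently $(k\Gamma^{\sigma}, A\Gamma^{\sigma}) = (k(\Gamma_{\mathcal O}^+(\mathfrak a))^{\sigma}, A(\Gamma_{\mathcal O}^+(\mathfrak a))^{\sigma})$.

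Now Theorem \ref{mainII} applies to $C_0$: it tells us that $C_0^{\sigma} \cong \mathbb H/\Gamma_{\mathcal O'}^+(\mathfrak a^{\sigma})$ with $\mathcal O'$ a maximal order in a quaternion algebra $(k', A')$ satisfying properties (1)--(3) of that theorem, namely $k' = (k\Gamma)^{\sigma}$, $Ram_f A' = (Ram_f A\Gamma)^{\sigma}$, and $A'$ unramified among archimedean places exactly at the identity embedding of $k'$. Since $\Gamma_{\mathcal O'}^+(\mathfrak a^{\sigma})$ is an arithmetic group associated with $(k', A')$, its invariant quaternion algebra is $(k', A')$; combined with the previous paragraph this gives $(k\Gamma^{\sigma}, A\Gamma^{\sigma}) = (k', A')$, which is precisely the three asserted properties (using $Ram_f A\Gamma^{\sigma} = (Ram_f A\Gamma)^{\sigma}$ and the remark in \ref{quatalgnf} that this set also equals $Ram_f A^{\sigma}$). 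Uniqueness up to isomorphism then follows from the classification recalled in \ref{quatalgnf}: a quaternion algebra over a number field is determined by its (finite, even-cardinality) ramification set, and here $k\Gamma^{\sigma}$ is totally real with exactly one archimedean unramified place, so the finite ramification set together with the archimedean data pins down $A\Gamma^{\sigma}$.

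The main obstacle I expect is the reduction step: one must be careful that passing from $\Gamma$ to a torsion-free principal congruence subgroup and back is genuinely compatible with Galois action, i.e. that ``commensurable uniformising groups'' is a Galois-stable relation on curves. The cleanest route is to phrase it through common finite étale covers as above and invoke that such covers, being defined by algebraic equations, have Galois conjugates which are again common finite étale covers of the conjugate curves (Proposition \ref{autHSigma}(3) and \cite{Go}); one then uses that commensurability of finite-covolume Fuchsian groups is equivalent to the curves having a common finite étale cover of each by a third curve. A secondary subtlety is bookkeeping with the archimedean places: Galois conjugation permutes the archimedean places of $k\Gamma$, and one must check that the normalisation ``unramified exactly at the identity embedding'' — which is a convention, not an intrinsic property (see the discussion around \eqref{tensor}) — is precisely what Theorem \ref{mainII} preserves, so that the statement of Theorem \ref{mainI} is the correct translation and not off by a choice of embedding.
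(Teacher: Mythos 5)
Your proposal is correct and follows essentially the same route as the paper: both pass to a torsion-free principal congruence subgroup $\Gamma_{\mathcal O}^+(\mathfrak a)$ for a maximal order, form the common finite unramified cover corresponding to the intersection with $\Gamma$, use that Galois conjugation preserves such covers to conclude $A\Gamma^{\sigma}=A(\Gamma_{\mathcal O}^+(\mathfrak a))^{\sigma}$ by commensurability invariance, and then apply Theorem \ref{mainII}. Your extra precautions (passing to a further torsion-free subgroup of the intersection, the archimedean bookkeeping) are harmless but not needed, since the intersection of two torsion-free Fuchsian groups is already torsion-free.
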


\begin{proof}
  Let 	  $\mathcal O$ be an order in $A\Gamma$ such that 
$\Gamma$ is commensurable to $P( \rho(\mathcal{O}^1)).$
We may assume that  $\mathcal O$ is a maximal order.

Let    $\Gamma_2=\Gamma_{\mathcal O}^+(\mathfrak a)$ be a torsion-free principal congruence subgroup as in Theorem \ref{mainII} and set $\Gamma_{12}=\Gamma\cap \Gamma_{2}$. 
Then the algebraic curve 
$C_{12}=\HH/\Gamma_{12}$ is simultaneously an unramified  cover of $C=\HH/\Gamma$ and $C_{2}=\HH/ \Gamma_2$, and therefore 
$C_{12}^{\sigma} $ is simultaneously an unramified cover of $C^{\sigma} $ and $C_{2}^{\sigma}$. Since quaternion algebras associated with arithmetic groups are only defined up to commensurability we  conclude that 
$A\Gamma=A\Gamma_{12}=A\Gamma_2$ 
and $A\Gamma^{\sigma}=A\Gamma_{12}^{\sigma}=A\Gamma_2^{\sigma}.$ Now apply Theorem \ref{mainII}.

\end{proof}


\begin{corollary} \label{sameA}The notation being as above, let $k$ be a normal extension of $\mathbb Q$.
Then $A\Gamma^{\sigma}=A\Gamma$ 
if and only if   $(Ram_f A\Gamma)^{\sigma}=Ram_f A\Gamma$. 
\end{corollary}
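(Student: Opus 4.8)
The plan is to reduce the statement to the classification of quaternion algebras over the number field $k=k\Gamma$ (the second of the two well-known facts recalled in \ref{quatalgnf}: such an algebra is determined up to isomorphism by its ramification set $Ram(A)=Ram_\infty A\cup Ram_f A$), combined with the explicit description of the pair $(k\Gamma^\sigma,A\Gamma^\sigma)$ supplied by Theorem \ref{mainI}.

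First I would record the consequences of normality. Since $k$ is normal over $\mathbb Q$, one has $k^\sigma=k$ for every $\sigma\in\Gal(\overline{\mathbb Q}/\mathbb Q)$; hence, by Theorem \ref{mainI}(1), the invariant trace field of $\Gamma^\sigma$ is $k\Gamma^\sigma=(k\Gamma)^\sigma=k$. Thus $A\Gamma$ and $A\Gamma^\sigma$ are quaternion algebras over one and the same field $k$, and the equality ``$A\Gamma^\sigma=A\Gamma$'' in the sense of the paper is just isomorphism of quaternion algebras over $k$. Next I would compare their archimedean ramification. By our standing hypothesis on arithmetic surface groups, $A\Gamma$ is unramified exactly at the identity embedding of $k$, while by Theorem \ref{mainI}(3) the algebra $A\Gamma^\sigma$ is likewise unramified exactly at the identity embedding of $k\Gamma^\sigma=k$. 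Therefore $Ram_\infty A\Gamma=Ram_\infty A\Gamma^\sigma$, both being the set of all archimedean places of $k$ other than the identity one. This is the step where normality of $k$ is essential: for non-normal $k$ the unramified place of $A\Gamma^\sigma$ (the identity embedding of $k$) need not coincide with the place one would obtain by transporting the unramified place of $A\Gamma$ along $\sigma$, and the archimedean parts can genuinely differ.

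Finally, by the cited classification, $A\Gamma^\sigma\cong A\Gamma$ over $k$ if and only if $Ram(A\Gamma^\sigma)=Ram(A\Gamma)$; since the infinite parts already agree, this is equivalent to $Ram_f A\Gamma^\sigma=Ram_f A\Gamma$, and Theorem \ref{mainI}(2) gives $Ram_f A\Gamma^\sigma=(Ram_f A\Gamma)^\sigma$. Combining these equivalences yields exactly $A\Gamma^\sigma=A\Gamma\iff (Ram_f A\Gamma)^\sigma=Ram_f A\Gamma$, as claimed.

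I do not expect a genuine obstacle: the content is a direct application of Theorem \ref{mainI} and the classification of quaternion algebras over $k$. The only point requiring care is the bookkeeping of which archimedean place is the distinguished unramified one on each side, and checking that comparing $A\Gamma$ with $A\Gamma^\sigma$ as algebras over a common field is legitimate — which it is, precisely because $k$ is normal, by Theorem \ref{mainI}(1).
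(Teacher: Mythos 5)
Your argument is correct and is exactly the intended one: the paper states this corollary without proof as an immediate consequence of Theorem \ref{mainI} together with the classification of quaternion algebras over a number field by their ramification sets, and your write-up supplies precisely those details (normality giving $k^{\sigma}=k$, the archimedean ramification sets agreeing because both algebras are unramified exactly at the identity place, and the finite parts being compared via $Ram_f A\Gamma^{\sigma}=(Ram_f A\Gamma)^{\sigma}$). No gaps.
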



\begin{corollary}
Suppose that  $k\Gamma=\mathbb Q$. Then $A\Gamma^{\sigma}=A\Gamma.$ In particular 
$ \com(\Gamma^{\sigma})= \com(\Gamma).$
\end{corollary}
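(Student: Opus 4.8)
The plan is to deduce this directly from Theorem~\ref{mainI}. The statement has two parts: first that $A\Gamma^{\sigma}=A\Gamma$ when $k\Gamma=\mathbb Q$, and then, as a consequence, that $\com(\Gamma^{\sigma})=\com(\Gamma)$.

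For the first part I would invoke the three characterising properties of $(k\Gamma^{\sigma},A\Gamma^{\sigma})$ given by Theorem~\ref{mainI}. Since $k\Gamma=\mathbb Q$, property (1) gives $k\Gamma^{\sigma}=(k\Gamma)^{\sigma}=\mathbb Q^{\sigma}=\mathbb Q=k\Gamma$; here the point is simply that every field automorphism of $\C$ fixes $\mathbb Q$ pointwise, so $\sigma$ acts trivially on $\mathbb Q$ and on its ring of integers $\mathbb Z$. Consequently the bijection $\mathfrak p\mapsto\mathfrak p^{\sigma}$ on prime ideals of $R_{k\Gamma}=\mathbb Z$ is the identity, so property (2) gives $Ram_fA\Gamma^{\sigma}=(Ram_fA\Gamma)^{\sigma}=Ram_fA\Gamma$. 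Finally property (3) says the unique archimedean place at which $A\Gamma^{\sigma}$ is unramified is the identity embedding of $k\Gamma^{\sigma}=\mathbb Q$, which is exactly the condition satisfied by $A\Gamma$ as well (over $\mathbb Q$ there is only one archimedean place). Thus $A\Gamma^{\sigma}$ and $A\Gamma$ are two quaternion algebras over $\mathbb Q$ with the same finite ramification set and the same (necessarily empty, by the parity of $Ram$) archimedean ramification; by the classification recalled in \ref{quatalgnf} (finite even ramification set determines the algebra up to isomorphism) they are isomorphic, i.e. $A\Gamma^{\sigma}=A\Gamma$. Alternatively this is just the special case $k=\mathbb Q$ of Corollary~\ref{sameA}, since $\mathbb Q$ is trivially a normal extension of itself and $(Ram_fA\Gamma)^{\sigma}=Ram_fA\Gamma$ holds automatically.

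For the second part I would use the description of the commensurator of an arithmetic group recalled in \ref{arithmetic}, namely $\com(\Gamma)=\widetilde P(A\Gamma^{+})$ where $A\Gamma^{+}$ is the group of elements of totally positive norm and $\widetilde P$ is the explicit projection into $PSL_2(\mathbb R)$ built from the identity embedding. Since both the formation of $A^{+}$ and the map $\widetilde P$ depend only on the pair $(k\Gamma,A\Gamma)$ together with its distinguished (identity) embedding, the equality $(k\Gamma^{\sigma},A\Gamma^{\sigma})=(k\Gamma,A\Gamma)$ of pairs equipped with their identity embeddings forces $\com(\Gamma^{\sigma})=\widetilde P(A\Gamma^{\sigma,+})=\widetilde P(A\Gamma^{+})=\com(\Gamma)$ as subgroups of $PSL_2(\mathbb R)$.

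I do not expect a serious obstacle here; the only point requiring a little care is making sure "$A\Gamma^{\sigma}=A\Gamma$" is being read as an isomorphism of quaternion algebras over the \emph{same} field $\mathbb Q$ respecting the identity embedding (so that the associated commensurators literally coincide inside $PSL_2(\mathbb R)$, not merely up to conjugation), and for that one should note that when $k\Gamma=\mathbb Q$ there is nothing to track on the archimedean side. This is the mildly delicate step, but it is handled entirely by the uniqueness clause in the classification of quaternion algebras over $\mathbb Q$ and by the canonicity of the recipe $\Gamma\mapsto\widetilde P(A\Gamma^{+})$; everything else is a direct substitution into Theorem~\ref{mainI}.
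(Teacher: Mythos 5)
Your proposal is correct and follows exactly the route the paper intends: the corollary is an immediate specialization of Theorem~\ref{mainI} (equivalently of Corollary~\ref{sameA} with $k=\mathbb Q$), using that $\sigma$ fixes $\mathbb Q$ and hence every prime of $\mathbb Z$, so the ramification data and therefore the isomorphism class of the quaternion algebra are unchanged, and then $\com(\Gamma)=\widetilde P(A\Gamma^{+})$ transfers this to the commensurator. The paper omits the proof entirely, and your write-up supplies precisely the expected details, including the correct caveat about reading the equality of commensurators via the identity embedding.
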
  
 
\begin{example}
Let   $\Gamma_p$, $p$ odd prime, the family of groups 
constructed in  \ref{example}. As for two different prime numbers $p$ and $q$ the fields $\mathbb Q(\cos(2\pi/p))$ and $\mathbb Q(\cos(2\pi/q))$ are not Galois conjugate, Theorem \ref{mainI} implies that   no pair of surface subgroups $\Gamma_p'<\Gamma_p$ and $\Gamma_q'<\Gamma_q$ can be Galois conjugate.  
 \end{example}
  
  \subsection{Congruence property as a Galois invariant}\label{congruence}

		Let $ \widetilde{P} : GL^+_2(\mathbb R)\rightarrow PSL_2(\mathbb R)$ be the obvious projection. 
	A subgroup of  $G<GL^+_2(\mathbb R)$ 
	(resp. of  $\Gamma < PSL_2(\mathbb R)$) is called a 
	\textit{congruence subgroup} in $A$ if $G$ (resp.   
	$\widetilde{P}^{-1}(\Gamma)$)  contains some principal congruence 
	subgroup $G_{\mathcal O}^+(\mathfrak a)$ for some maximal order $\mathcal O$ and  some ideal $\mathfrak a$ of $\mathcal O$.

	\begin{proposition}\label{CongSubInvar}
Being uniformized by a congruent subgroup is a Galois invariant property.
\end{proposition}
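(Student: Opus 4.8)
The plan is to reduce the statement to Theorem \ref{mainI} (equivalently Theorem \ref{mainII}) together with the observation that the congruence property of a Fuchsian surface group depends only on its commensurability class. First I would make the definition precise in the form most convenient here: an arithmetic Fuchsian surface group $\Gamma$ associated with $(k\Gamma, A\Gamma)$ is a congruence subgroup if and only if, for a (equivalently any) maximal order $\mathcal{O}$ in $A\Gamma$, the group $\widetilde{P}^{-1}(\Gamma)$ contains some principal congruence subgroup $G_{\mathcal{O}}^{+}(\mathfrak{a})$. The key elementary remark is that this is a commensurability invariant: if $\Gamma$ and $\Gamma'$ are commensurable arithmetic Fuchsian surface groups then $\Gamma$ is a congruence subgroup if and only if $\Gamma'$ is. Indeed, given $\Gamma \supset G_{\mathcal{O}}^{+}(\mathfrak{a})$ one may pass to a smaller principal congruence subgroup $G_{\mathcal{O}}^{+}(\mathfrak{b})$ (with $\mathfrak{b}\subset \mathfrak{a}$ a suitable ideal) contained in $\Gamma \cap \Gamma'$, using that principal congruence subgroups form a cofinal family among themselves under intersection and that $[\Gamma:\Gamma\cap\Gamma']<\infty$; conversely a congruence subgroup inside $\Gamma$ is a fortiori a congruence subgroup inside any finite-index overgroup. (One also needs independence of the choice of maximal order, which follows because any two maximal orders in $A\Gamma$ contain a common suborder of finite index, hence their norm-1 groups are commensurable and their principal congruence subgroups cofinal in one another.)

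Granting this, the proof runs as follows. Let $C$ be uniformised by an arithmetic Fuchsian surface group $\Gamma$ which is a congruence subgroup, and let $\sigma\in \Gal(\overline{\Q}/\Q)$; recall $C$ is defined over a number field by Proposition \ref{DefinedOverNumberField}, so this makes sense. Choose a maximal order $\mathcal{O}$ in $A\Gamma$ and, by the previous paragraph, an ideal $\mathfrak{a}\subset R_{k\Gamma}$ with $\Gamma_{\mathcal{O}}^{+}(\mathfrak{a})$ torsion-free and $\Gamma_{\mathcal{O}}^{+}(\mathfrak{a})\subset\Gamma$; set $\Gamma_{12}=\Gamma\cap\Gamma_{\mathcal{O}}^{+}(\mathfrak{a})=\Gamma_{\mathcal{O}}^{+}(\mathfrak{a})$ since the latter is already contained in $\Gamma$. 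The curve $C':=\HH/\Gamma_{\mathcal{O}}^{+}(\mathfrak{a})$ is an unramified cover of $C$, so $C'^{\sigma}$ is an unramified cover of $C^{\sigma}$, whence $\Gamma^{\sigma}$ and the uniformising group of $C'^{\sigma}$ are commensurable. By Theorem \ref{mainII}, $C'^{\sigma}$ is analytically isomorphic to $\HH/\Gamma_{\mathcal{O}'}^{+}(\mathfrak{a}^{\sigma})$ for a maximal order $\mathcal{O}'$ in the quaternion algebra $(k\Gamma^{\sigma}, A\Gamma^{\sigma})=((k\Gamma)^{\sigma}, A')$ described there; in particular the uniformising group of $C'^{\sigma}$ is the congruence subgroup $\Gamma_{\mathcal{O}'}^{+}(\mathfrak{a}^{\sigma})$. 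Hence $\Gamma^{\sigma}$, being commensurable with a congruence subgroup, is itself a congruence subgroup by the commensurability-invariance established above. Applying the same argument to $\sigma^{-1}$ gives the converse, so the congruence property of the uniformising group is preserved by $\sigma$.

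I expect the main obstacle to be the commensurability-invariance step, i.e. checking carefully that "being a congruence subgroup" does not secretly depend on the chosen maximal order or on passing between a group and its finite-index sub/overgroups. This requires knowing that in a quaternion algebra over a number field any two maximal orders $\mathcal{O}_1,\mathcal{O}_2$ have $\mathcal{O}_1\cap\mathcal{O}_2$ of finite index in each, that $G_{\mathcal{O}}^{+}(\mathfrak{a}\mathfrak{b})\subset G_{\mathcal{O}}^{+}(\mathfrak{a})\cap G_{\mathcal{O}}^{+}(\mathfrak{b})$, and that $\bigcap_{\mathfrak{a}} G_{\mathcal{O}}^{+}(\mathfrak{a})=\{1\}$; all of these are standard in the theory of orders in quaternion algebras (cf. \cite{MR}), and with them in hand the rest is a routine diagram chase through Theorem \ref{mainII} exactly as in the proof of Theorem \ref{mainI}.
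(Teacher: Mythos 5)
Your overall strategy---pass to a torsion-free principal congruence subgroup $\Gamma_{\mathcal O}^{+}(\mathfrak a)\subset\Gamma$, apply Theorem \ref{mainII} to the corresponding covering curve, and read off that $\Gamma^{\sigma}$ sits above a principal congruence subgroup---is exactly the paper's. But your ``key elementary remark'' that being a congruence subgroup is a commensurability invariant is false, and you invoke it at the decisive step (``$\Gamma^{\sigma}$, being commensurable with a congruence subgroup, is itself a congruence subgroup''). The justification you offer---that since $[\Gamma:\Gamma\cap\Gamma']<\infty$ one can find $G_{\mathcal O}^{+}(\mathfrak b)\subset\Gamma\cap\Gamma'$---does not work: $\Gamma\cap\Gamma'$ is merely a finite-index subgroup of the congruence group $\Gamma$, and a finite-index subgroup of a principal congruence subgroup need not contain any principal congruence subgroup. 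This is precisely the (negative) congruence subgroup problem for Fuchsian groups: every arithmetic Fuchsian group has finite-index subgroups that are not congruence, and any such subgroup is commensurable with a congruence group. So the equivalence ``$\Gamma$ congruence $\Leftrightarrow$ $\Gamma'$ congruence for commensurable $\Gamma,\Gamma'$'' genuinely fails; only the direction ``a finite-index \emph{overgroup} of a congruence subgroup is congruence'' is true.

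Fortunately that true direction is all you need, and your own setup already provides it: since $C'^{\sigma}\to C^{\sigma}$ is an unramified covering of compact Riemann surfaces, the uniformizing group of $C'^{\sigma}$ is (up to conjugation) \emph{contained} in $\Gamma^{\sigma}$, not merely commensurable with it. By Theorem \ref{mainII} that uniformizing group is the principal congruence subgroup $\Gamma_{\mathcal O'}^{+}(\mathfrak a^{\sigma})$, so $\Gamma^{\sigma}\supset\Gamma_{\mathcal O'}^{+}(\mathfrak a^{\sigma})$ and $\Gamma^{\sigma}$ is congruence directly from the definition. This is the paper's argument; replacing ``commensurable'' by ``contains'' in your penultimate sentence, and deleting the commensurability lemma entirely, repairs the proof. (Your auxiliary claim that principal congruence subgroups with respect to two maximal orders are cofinal in one another is correct, via the conductor, but it is not needed here since the definition only asks for \emph{some} maximal order.)
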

	\begin{proof}
Let 	  $C$ be an algebraic curve isomorphic to 
$\HH/ \Gamma$ where $\widetilde{P}^{-1}(\Gamma)$ contains some principal congruence 
	subgroup $G_{\mathcal O}^+(\mathfrak a)$. 
	
	Let $\sigma\in Gal(\overline{\mathbb Q}/\mathbb Q)$.
	Then $C$ is covered by the curve $C_1:=\HH/ \Gamma_{\mathcal O}^+(\mathfrak a)$ and so $C^{\sigma}$ is covered by $C_1^{\sigma}.$ This implies that the uniformizing group of $C^{\sigma}$, which we have denoted   throughout $\Gamma^{\sigma}$, contains the uniformizing group of $C_1^{\sigma}$,
	 which by Theoerem \ref{mainII} is  a principal congruence subgroup of the form
	$\Gamma_{\mathcal O'}^+(\mathfrak a^{\sigma})$.
	This means that $\Gamma^{\sigma}$ is a congruence subgroup, as was to be seen.
	\end{proof}

	We mention another direct consequence of Theorem \ref{mainII}:
	
	\begin{corollary}\label{cormainII} Let $k$ be a normal extension of $\mathbb Q$ and $A$ a quaternion algebra over $k$ with following properties:
\begin{enumerate}
\item For any two maximal orders $\mathcal O$ and $\mathcal O'$ in $A$ there exists $x\in A$ such that $\mathcal O'=x^{-1}\mathcal O x$,
\item $Ram_f(A)$ is invariant under $Gal(k/\mathbb Q)$. 
\end{enumerate}
Let $\sigma\in Gal(\overline{\mathbb Q}/\mathbb Q)$ and $\mathfrak a$ be an ideal in $R_k$ such that $\mathfrak a^{\sigma}=\mathfrak a$. Then, the curve $C(\mathfrak a)=\mathbb H/\Gamma_{\mathcal O}^+(\mathfrak a)$ is isomorphic to its Galois conjugate curve $C(\mathfrak a)^{\sigma}$.
\end{corollary}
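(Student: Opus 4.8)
The plan is to deduce this directly from Theorem~\ref{mainII} together with the classification of quaternion algebras over number fields recalled in~\ref{quatalgnf}; in essence the corollary is exactly the case of Theorem~\ref{mainII} in which all the data that $\sigma$ \emph{a priori} moves around turns out to be fixed. First I would observe that, $k$ being normal over $\mathbb Q$, the restriction $\sigma|_{k}$ lies in $\Gal(k/\mathbb Q)$; in particular $k^{\sigma}=k$ and, by hypothesis~(2), $(Ram_{f}A)^{\sigma}=Ram_{f}A$. Assuming (as is implicit in the statement, and automatic once $\mathfrak a$ is deep enough) that $\Gamma_{\mathcal O}^{+}(\mathfrak a)$ is torsion-free, Theorem~\ref{mainII} applies and gives $C(\mathfrak a)^{\sigma}\cong\HH/\Gamma_{\mathcal O'}^{+}(\mathfrak a^{\sigma})$, where $\mathcal O'$ is a maximal order in a quaternion algebra $(k',A')$ with $k'=k^{\sigma}=k$, with $Ram_{f}A'=(Ram_{f}A)^{\sigma}=Ram_{f}A$, and unramified at exactly the archimedean place $id$. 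Since, by the standing normalization on $A$ recalled in~\ref{quatalgnf}, $A$ is also unramified at exactly $id$, the archimedean ramification sets of $A$ and $A'$ agree as well, so $Ram(A')=Ram(A)$; by the classification (a quaternion algebra over a number field is determined up to isomorphism by its ramification set) we conclude $A'\cong A$.

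Next I would transport $\mathcal O'$ through this isomorphism so as to regard it as a maximal order of $A$ itself. Hypothesis~(1) then furnishes an invertible $x\in A$ with $\mathcal O'=x^{-1}\mathcal O x$. A short computation, using that the reduced norm is invariant under conjugation and that $\mathfrak a\mathcal O'=x^{-1}(\mathfrak a\mathcal O)x$, shows $G_{\mathcal O'}^{+}(\mathfrak a)=x^{-1}G_{\mathcal O}^{+}(\mathfrak a)x$, and hence, after applying $\widetilde P\circ\rho$, that $\Gamma_{\mathcal O'}^{+}(\mathfrak a)$ is the conjugate of $\Gamma_{\mathcal O}^{+}(\mathfrak a)$ by the image of $x$ in $PSL_{2}(\mathbb R)$ (one may take this image inside $PSL_{2}(\mathbb R)$ after multiplying $x$ on the left by a suitable element of the normalizer of $\mathcal O$, so that its reduced norm becomes totally positive). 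Conjugate Fuchsian groups have conformally equivalent quotient Riemann surfaces, so $\HH/\Gamma_{\mathcal O'}^{+}(\mathfrak a)\cong\HH/\Gamma_{\mathcal O}^{+}(\mathfrak a)=C(\mathfrak a)$. Finally, since $\mathfrak a^{\sigma}=\mathfrak a$ by assumption we have $\Gamma_{\mathcal O'}^{+}(\mathfrak a^{\sigma})=\Gamma_{\mathcal O'}^{+}(\mathfrak a)$, and combining this with the previous chain of isomorphisms yields $C(\mathfrak a)^{\sigma}\cong C(\mathfrak a)$, as desired.

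The argument is essentially bookkeeping layered on top of Theorem~\ref{mainII}; the two points that genuinely require care, and together constitute the only real obstacle, are: (i) recognizing that $\sigma|_{k}\in\Gal(k/\mathbb Q)$, which is precisely what makes hypothesis~(2) applicable and pins down $Ram_{f}A'=Ram_{f}A$; and (ii) upgrading the abstract isomorphism $A'\cong A$ to the statement that $\mathcal O'$ is conjugate to $\mathcal O$ \emph{inside $A$}, which is where hypothesis~(1) enters and where the standing normalization on the archimedean ramification of $A$ is used (to force $Ram_{\infty}A'=Ram_{\infty}A$, and thereby $Ram(A')=Ram(A)$). Everything else — the norm/ideal computation, the mild orientation adjustment of $x$, and the passage from a conjugacy of Fuchsian groups to an isomorphism of the quotient curves — is routine.
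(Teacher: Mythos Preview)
Your argument is correct and is exactly the intended one: the paper states this corollary without proof as ``another direct consequence of Theorem~\ref{mainII}'', and what you have written is precisely the natural unpacking of that phrase (normality of $k$ forces $k^{\sigma}=k$; hypothesis~(2) together with the classification in~\ref{quatalgnf} forces $A'\cong A$; hypothesis~(1) then makes $\mathcal O'$ conjugate to $\mathcal O$; and $\mathfrak a^{\sigma}=\mathfrak a$ finishes it).

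One small caveat worth flagging: your parenthetical fix for the possibility that $n(x)$ is not totally positive --- adjusting $x$ by an element of the normalizer of $\mathcal O$ --- is not obviously available in general, since central elements contribute only squares to the norm and a non-central normalizer element with the required sign need not exist. The paper itself is silent on this point, but in its worked example it invokes the \emph{narrow} class (type) number $h_{\infty}=1$, i.e.\ conjugacy of maximal orders by elements of $A^{+}$, which is the cleaner hypothesis and makes the orientation issue disappear. Reading hypothesis~(1) in that slightly stronger sense is almost certainly what is intended.
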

	%
	\begin{example} \label{example_cong}
 
 All the conditions of Corollary \ref{cormainII} are satisfied in the case where $k=\mathbb Q$ and $\Gamma_{\mathcal O}^+(\mathfrak a)$ is a principal congruence subgroup in a division quaternion algebra $A$ over $\mathbb Q$. This only reflects the fact that in this case the curves 
 $C(\mathfrak a)$ are defined over  $\mathbb Q$ 
(see e.g.  \cite{El1}, 2.3).
\end{example}

\begin{example}  
Consider the number field $k=\mathbb Q(\zeta_7+\zeta_7^{-1})$, where $\zeta_7=\exp(2\pi i/7)$, that is 
 $k=\mathbb Q(\cos \frac{2\pi}{7})$ is the totally real subfield of the cyclotomic field of $7$th roots of unity.
Let $A$ be the quaternion algebra over $k$ which is 
  unramified at the infinite place corresponding to the identity embedding and ramified at the two other infinite places and such that $Ram_f(A)=\emptyset$. This uniquely determines $A$ up to isomorphism (see \ref{quatalgnf}). The detailed study of this algebra carried out by Elkies in  the last section of \cite{El2} (see also \cite{El1}, 5.3)   allows us to illustrate the previous results in this case.

\
	
	Set 	$c=\zeta_7+\zeta_7^{-1}=2\cos \frac{2\pi}{7}$ so that $k=\mathbb Q(c)$ and $R_k=\mathbb Z [c]$. Then (\cite{El2}, 4.4):
	\begin{enumerate}[(1)]
	\item $A=\big(\dfrac{c,c}{k}\big)$ and there is an embedding $\rho:A\otimes_{k}\mathbb R \simeq 
 M_2(\mathbb R)$ given by the basis
	 $\vec{1}={\tiny \left( \begin{array}{cc}
1  & 0 \\
0  & 1\end{array} \right)}$,
$\vec{i}={\tiny \left( \begin{array}{cc}
\sqrt{c}  & 0 \\
0  & -\sqrt{c} \end{array} \right)}$,
$ \vec{j}={\tiny \left( \begin{array}{cc}
0  & \sqrt{c}\\
\sqrt{c} & 0 \end{array} \right)}$
and
$\vec{i}\vec{j}={\tiny \left( \begin{array}{cc}
0  & c\\
-c  &0 \end{array} \right)}$.
\item The ring 
$$\mathcal O=\mathbb Z [c][\vec{i},\vec{j'}]
=\mathbb Z [c]\vec{1}+\mathbb Z [c]\vec{i}+\mathbb Z [c]\vec{j'}+\mathbb Z [c]\vec{i}\vec{j'}, $$ 
where $\vec{j'}=\frac{1}{2}(1+ c\vec{i} +(c^2 + c +1)\vec{j} \ ),
$ is a maximal order in $A$ and the group $P(\mathcal O^1)$, which in this case agrees with $\tilde{P}(\mathcal O^+)  $, is isomorphic to the triangle   
   Fuchsian group $\Delta =\Delta(2,3,7)$  of signature $(2,3,7)$.
	\item There exists only one prime ideal $\mathfrak p_7$ above $p=7$. The Galois group $Gal(k/\mathbb Q)$ fixes $\mathfrak p_7$. The same is true for every prime $p\equiv \pm 2,\pm 3\bmod 7$: There exists only one prime ideal $\mathfrak p_p$ above $p$. Every automorphism of $k$ fixes $\mathfrak p_p$.
\item Above each prime   $p\equiv \pm 1\bmod 7$ there are three different prime ideals $\mathfrak p_{1,p},\mathfrak p_{2,p}$ and $\mathfrak p_{3,p}$. 
These three prime ideals form a Galois orbit; that is, after a possible renumeration $\mathfrak p_{2,p}=\mathfrak p_{1,p}^{\sigma}$ and $\mathfrak p_{3,p}=\mathfrak p_{1,p}^{\sigma^2}$, where $\sigma$ is the generator of the Galois group $Gal(k/\mathbb Q)$ (of order three). 
\end{enumerate}

 Now, let $\mathfrak q$ be a prime ideal in $R_k$ and denote by $\Gamma = \Gamma_{\mathcal O}^+(\mathfrak q)$ 
the corresponding principal congruence subgroup 
 so that $A=A\Gamma$.
Let $C(\mathfrak q) \cong \HH/\Gamma$ be the corresponding algebraic curve.  We claim that for every $\sigma\in Gal(\overline{\mathbb Q}/\mathbb Q)$ the curve  $C(\mathfrak q)^{\sigma}\cong \HH/\Gamma^{\sigma}$ agrees with the curve  
$C(\mathfrak q^{\sigma})$ uniformized by the principal congruence subgroup  $ \Gamma_{\mathcal O}^+(\mathfrak q^{\sigma})$ of $A$.  
This is because   Corollary \ref{sameA}
implies that  $A\Gamma^{\sigma}=A\Gamma=A$ 
and Theorem \ref{mainII} tells us that $\Gamma^{\sigma} = \Gamma_{\mathcal O'}^+(\mathfrak q^{\sigma})$
 where  $\mathcal O'$ is a maximal order in $A$. A computation of the narrow class  number $h_{\infty}$  associated with $A$ (see \cite{MR},p.221) gives  
$h_{\infty}=1$
 which means that 
  there is only one conjugacy class of  maximal orders, hence   up to conjugation by an element  $x\in A$ the order  $\mathcal O'$
  agrees with $\mathcal O.$
This allows us to draw the following conclusions:
\begin{enumerate}[(a)]
 \item If $\mathfrak q$ lies above a rational prime $p=7$ or $p\equiv \pm 2,\pm 3\bmod 7$ for every $\sigma\in Gal(\overline{\mathbb Q}/\mathbb Q)$ the curve $C(\mathfrak p)^{\sigma}$ is isomorphic to $C(\mathfrak p)$. This only reflects the fact that these curves are defined over $\mathbb Q$ (although see  Remark \ref{ModuliField} below).
 \item If $\mathfrak q=\mathfrak p_p$ is a prime ideal above a rational prime $p\equiv \pm 1\bmod 7$, then the $Gal(\overline{\mathbb Q}/\mathbb Q)$-orbit of $C(\mathfrak p_p)$ consists of three non-isomorphc curves $C(\mathfrak p_p)=C(\mathfrak p_{1,p})$, $C(\mathfrak p_{2,p})$,$C(\mathfrak p_{3,p}).$ Each of them remains invariant under the action of $Gal(\overline{\mathbb Q}/k)$ and, as above, this implies that they can be defined over the number field $k$.  
\end{enumerate}

One way to see that these three curves  are pairwise non-isomorphic or, equivalently, that the uniformizing groups
$\Gamma(\mathfrak p_{i,p})$ are not conjugate in $PSL_2(\mathbb R)$  is as follows: The triangle group $\Delta$ is known to be a maximal Fuchsian group. This implies that the
groups $\Gamma_i=\Gamma(\mathfrak p_{i,p})$ are not only 
normal in $\Delta$ but that, in fact, their normalizers 
$N(\Gamma_i)$ agree with $\Delta$. Now, if we had 
  $\Gamma_{i}=x\Gamma_{j}x^{-1} $ for some 
	$x\in PSL_2(\mathbb R)$, then    $\Gamma_{i}$ would be 
 contained in the maximal triangle groups  $\Delta$  and  $x \Delta x^{-1}$, hence we would have 
 $N(\Gamma_{i})=\Delta= x \Delta x^{-1}$. This would imply that the element $x$ lies in the normalizer of $\Delta$ which, by maximality, agrees with $\Delta$. This, in turn, would yield that $\Gamma_i=\Gamma_j$, a contradiction.

   These examples are geometrically interesting as they provide examples of Riemann surfaces of genus $>1$ with maximal number of automorphisms (see for instance \cite{JW}).
\end{example}

\begin{remark}\label{ModuliField}
   If $C$ is an arbitrary curve defined over a number field, the invariance of the isomorphism class of   $C$ under an absolute Galois group 
$ Gal(\overline{\mathbb Q}/k)$ 
only means that the so-called  \emph{field of moduli} of the curve is contained in  $k$.   Fortunately when, as in this case, the uniformizing group is normally  contained in a triangle group the field of moduli is also a field of definition \cite{Wo}.
\end{remark}


\subsection{The torsion of $\com(\Gamma)$ is Galois invariant}\label{torsiongaloisinvariant}

 We can use Theorem \ref{mainI} also to prove the equality $\mathcal P(\Gamma)=\mathcal P(\Gamma^{\sigma})$ between the sets of periods of the commensurator of $\Gamma$ and the commensurator of $\Gamma^{\sigma}$. In order to do so, we first recall (see \cite[Theorem 8.4.4]{MR}) that   the commensurator of $\Gamma$ in $\text{PSL}_2(\mathbb R)\cong \text{PGL}_2^+(\mathbb R)$ is $\tilde P(A^+)$.

 \subsubsection{Maclachlan's characterization of   torsion in 
 $\com(\Gamma).$ } \label{arithmeticgroups}

Following the work of Chinburg and Friedman \cite {CF}, C. Maclachlan (\cite{Mac1}, see also \cite {MR}, Lemma 12.5.6) showed the following result:  
 
\begin{proposition} \label{LemmaMR}
$\com(\Gamma)$ contains
 an element of order $m\geq 3$ if and only if the following properties hold: 
 \begin{enumerate}[i)]
\item $ \cos \frac{2\pi}{m}$ lies in the invariant trace field  
$k\Gamma.$
\item There is an embedding of $k\Gamma-$algebras $\varphi: k\Gamma(e^{2\pi i/m}) \hookrightarrow  A\Gamma.$
  \end{enumerate}
  In that case   $z=1+e^{2\pi i/m} \in A\Gamma$  provides such a finite order  element.
\end{proposition}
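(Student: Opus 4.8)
The plan is to establish both implications by translating finite-order elements of $\com(\Gamma)$ into algebraic data about the invariant quaternion algebra, using the identification $\com(\Gamma)=\widetilde{P}(A^+)$ recalled above. For the forward direction, suppose $\alpha\in\com(\Gamma)$ has order $m\geq 3$; lift $\alpha$ to an element $X\in A^+\subset A\Gamma$ whose image in $PSL_2(\mathbb R)$ is $\alpha$. Since $\alpha$ has finite order $m$, the element $X$, regarded inside $A\Gamma\otimes_{k\Gamma}\mathbb R\cong M_2(\mathbb R)$, is elliptic with rotation angle a multiple of $\pi/m$; after scaling we may arrange that $X$ is conjugate in $M_2(\mathbb R)$ to a rotation by $2\pi/m$, so its characteristic polynomial is $t^2-(2\cos\frac{2\pi}{m})t+1$. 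The trace $\mathrm{tr}(X)=2\cos\frac{2\pi}{m}$ is a reduced trace of an element of $A\Gamma$, hence lies in $k\Gamma$, giving (i). For (ii), the subalgebra $k\Gamma[X]\subseteq A\Gamma$ generated by $X$ is a commutative $k\Gamma$-subalgebra; because $X$ satisfies an irreducible quadratic over $k\Gamma$ (its minimal polynomial, once (i) holds) whose roots are $e^{\pm 2\pi i/m}$, this subalgebra is isomorphic to the field $k\Gamma(e^{2\pi i/m})$, which therefore embeds into $A\Gamma$ as a $k\Gamma$-algebra, giving (ii).

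For the converse, assume (i) and (ii). Set $\zeta=e^{2\pi i/m}$ and let $\varphi:k\Gamma(\zeta)\hookrightarrow A\Gamma$ be the given embedding of $k\Gamma$-algebras. Put $z=\varphi(1+\zeta)\in A\Gamma$. I would first check that $z$ has totally positive norm: its reduced norm is $n_{A\Gamma}(z)=N_{k\Gamma(\zeta)/k\Gamma}(1+\zeta)=(1+\zeta)(1+\bar\zeta)=2+2\cos\frac{2\pi}{m}>0$, and applying any real embedding $\tau$ of $k\Gamma$ one gets $2+2\cos\frac{2\pi l}{m}$ for some integer $l$ coprime to $m$ (since $\tau$ permutes the conjugates of $\zeta$), which is positive because $m\geq 3$ forces $\cos\frac{2\pi l}{m}\neq -1$; hence $z\in A^+$ and $\widetilde P(z)\in\com(\Gamma)$. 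Next I would compute the order of $\widetilde P(z)$ in $PSL_2(\mathbb R)$: embedding $A\Gamma$ into $M_2(\mathbb R)$ via the identity place, $z$ maps to a matrix with eigenvalues $1+\zeta$ and $1+\bar\zeta$, so $z/\sqrt{n(z)}$ has eigenvalues $e^{\pm i\pi/m}$ (using $1+e^{i\theta}=2\cos(\theta/2)e^{i\theta/2}$ with $\theta=2\pi/m$), i.e. it is a rotation by $2\pi/m$ in $SL_2(\mathbb R)$; passing to $PSL_2(\mathbb R)$ its image has order exactly $m$ when $m$ is odd and order $m$ in general after checking that $2\pi/m$ is a primitive angle mod $\pi\mathbb Z$ — one should note that for $m\geq 3$ the element $\widetilde P(z)$ has order $m$, which is what is claimed.

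The main obstacle I anticipate is the careful bookkeeping at the identity real place to pin down the order of $\widetilde P(z)$ \emph{exactly} rather than merely as a divisor of $m$: one must verify that the rotation angle is a primitive $m$-th root situation in $PSL_2(\mathbb R)$, i.e. that $e^{i\pi/m}$ generates the relevant cyclic subgroup of $PSO(2)/\{\pm I\}$, handling the parity of $m$ correctly (for even $m$, $-I$ identifies the rotation by $\pi$ with the identity, so the order is $m/\gcd(m,2)\cdot(\dots)$ — this needs to be done honestly). A secondary point is ensuring the embedding $\varphi$ in (ii) is as $k\Gamma$-algebras so that traces and norms computed in $k\Gamma(\zeta)$ agree with reduced traces and norms in $A\Gamma$; this is exactly the hypothesis, and it is what makes the norm-positivity computation and the eigenvalue computation legitimate. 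Everything else — that $z\in A^+$, that $\widetilde P$ lands in $\com(\Gamma)$, that the two conditions are equivalent — then follows from the structure theory of quaternion algebras and the description $\com(\Gamma)=\widetilde P(A^+)$ already recorded in Section~\ref{arithmetic}.
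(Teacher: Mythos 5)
The paper does not actually prove this proposition: it imports it from Maclachlan \cite{Mac1} (see also \cite{MR}, Lemma 12.5.6, which rests on the Chinburg--Friedman embedding theorem), and the only thing the authors verify themselves is the passage from $P(A^{\star})$ (the form in which the result is stated in \cite{MR}) to $P(A^{+})=\com(\Gamma)$, namely that $z=1+e^{2\pi i/m}$ has totally positive norm. Your converse direction covers exactly that adjustment, and your norm computation $n(z)=2+2\cos\frac{2\pi}{m}$ together with the analysis of the conjugates $2+2\cos\frac{2\pi l}{m}$, $\gcd(l,m)=1$, is correct (the paper gets the positivity at the ramified places more cheaply by observing that the Hamiltonian norm form is positive definite). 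Your eigenvalue computation $1+e^{i\theta}=2\cos(\theta/2)e^{i\theta/2}$ showing that $\widetilde P(z)$ is rotation by $2\pi/m$ about its fixed point, hence of order exactly $m$ in $PSL_2(\mathbb R)$, is also fine; the parity worry you raise does not materialize, since rotation by angle $\pi/m$ in $SO(2)$ has order exactly $m$ modulo $\{\pm I\}$ for every $m\geq 3$.

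The forward direction, however, contains a genuine imprecision. You write that ``after scaling we may arrange that $X$ is conjugate in $M_2(\mathbb R)$ to a rotation by $2\pi/m$, so its characteristic polynomial is $t^2-(2\cos\frac{2\pi}{m})t+1$'' and then read off $\mathrm{tr}(X)=2\cos\frac{2\pi}{m}\in k\Gamma$. Two things go wrong here. First, neither scaling by a scalar nor conjugation changes the rotation angle: an element of order $m$ acts on $\mathbb H$ as rotation by $2k\pi/m$ for some $k$ coprime to $m$, and you cannot force $k=1$; you must either replace $\alpha$ by a suitable power $\alpha^{j}$ (which is legitimate, since $\alpha^{j}$ still lies in $\com(\Gamma)$ and lifts to $X^{j}\in A^{+}$) or work with $\cos\frac{2k\pi}{m}$ and invoke $\mathbb Q(\cos\frac{2k\pi}{m})=\mathbb Q(\cos\frac{2\pi}{m})$. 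Second, the normalized matrix $X/\sqrt{n(X)}$ need not lie in $A\Gamma$, because $\sqrt{n(X)}$ need not lie in $k\Gamma$; what lies in $k\Gamma$ a priori is $\mathrm{tr}(X)$ and $n(X)$, hence $\mathrm{tr}(X)^2/n(X)=4\cos^2\frac{k\pi}{m}$, which gives $\cos\frac{2k\pi}{m}\in k\Gamma$ and then (i) by the field equality just mentioned. With these repairs the identification $k\Gamma[X]\cong k\Gamma(e^{2\pi i/m})$ for (ii) goes through, since $e^{2ik\pi i/m}=X^2/n(X)\cdot(\text{unit})$ generates the same quadratic extension. So your argument is salvageable and essentially self-contained, which is more than the paper attempts, but as written the key trace step of the forward implication is not correct.
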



\

Actually,  in  Lemma 12.5.6  of \cite{MR} this result  is stated   for $P(A^{\star})$ instead of 
$\com(\Gamma)=P(A^{+})$ but the result holds just as well for $P(A^{+})$ because the element $z=1+e^{2\pi i/m}$ lies in  $P(A^{+})$, since its image in 
$A\Gamma$, 
$\varphi(z)={\tiny \left( \begin{array}{cc}
1+ \cos \frac{2\pi}{q}  & \sin \frac{2\pi}{q} \\
    &  \\
-\sin \frac{2\pi}{q}  & 1+ \cos \frac{2\pi}{q} \end{array} \right)}$,  has positive determinant. Note that with respect to other embeddings the positivity of the norm is automatically satisfied, as the norm form in Hamiltonian quaternions is  a positive definite quadratic form.  

\begin{theorem}  \label{periods}
 Let $\mathcal{P}(\Gamma)\subset \mathbb{N}$ denote the set of orders
		(or periods)  of finite order  elements of   $\com(\Gamma)$. Then  $\mathcal{P}(\Gamma) = 
	\mathcal{P}(\Gamma^{\sigma})$, for any $\sigma \in Gal( \C/\Q)$.  
\end{theorem}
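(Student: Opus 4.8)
The plan is to use Maclachlan's characterisation of torsion in $\com(\Gamma)$ recorded in Proposition \ref{LemmaMR} together with the Doi--Naganuma description of the Galois conjugate quaternion algebra from Theorem \ref{mainI}. According to Proposition \ref{LemmaMR}, for $m \geq 3$ one has $m \in \mathcal{P}(\Gamma)$ if and only if $\cos\frac{2\pi}{m} \in k\Gamma$ and there is an embedding of $k\Gamma$-algebras $k\Gamma(e^{2\pi i/m}) \hookrightarrow A\Gamma$; the small periods $m = 1, 2$ are trivially in every $\mathcal{P}(\Gamma)$, so it suffices to treat $m \geq 3$. Fix $\sigma \in Gal(\C/\Q)$. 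By Corollary \ref{kaz} the group $\Gamma^{\sigma}$ is again arithmetic, so the same characterisation applies to $\mathcal{P}(\Gamma^{\sigma})$ with $k\Gamma^{\sigma}$ and $A\Gamma^{\sigma}$ in place of $k\Gamma$ and $A\Gamma$. Thus the theorem reduces to showing that the two conditions in Proposition \ref{LemmaMR} for a given $m$ hold for $(k\Gamma, A\Gamma)$ if and only if they hold for $(k\Gamma^{\sigma}, A\Gamma^{\sigma})$.

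First I would dispose of condition i). By Theorem \ref{mainI}(1), $k\Gamma^{\sigma} = (k\Gamma)^{\sigma}$. The number $\cos\frac{2\pi}{m}$ is fixed by every element of $Gal(\C/\Q)$ because it is a rational combination of roots of unity lying in $\R$ — more precisely $2\cos\frac{2\pi}{m} = \zeta_m + \zeta_m^{-1}$ is an algebraic integer whose image under $\sigma$ is $\zeta_m^{j} + \zeta_m^{-j}$ for the appropriate $j$, but one checks directly that in fact the relevant point is simply that $\cos\frac{2\pi}{m}\in k\Gamma$ is equivalent to $\sigma(\cos\frac{2\pi}{m}) = \cos\frac{2\pi}{m} \in \sigma(k\Gamma) = k\Gamma^{\sigma}$; applying $\sigma$ is a field isomorphism $k\Gamma \to k\Gamma^{\sigma}$, so it carries the subfield $\Q(\cos\frac{2\pi}{m})$ — which is contained in $k\Gamma$ precisely when $\cos\frac{2\pi}{m}\in k\Gamma$ — isomorphically onto $\Q(\sigma(\cos\frac{2\pi}{m}))$, and since this last element is a real algebraic number of the same minimal polynomial degree, the containment transfers. (I will phrase this cleanly using that $\sigma: k\Gamma \xrightarrow{\sim} k\Gamma^{\sigma}$ is a ring isomorphism, so $x \in k\Gamma \iff \sigma(x) \in k\Gamma^{\sigma}$ for any $x$ that is Galois-stable, and note $\cos\frac{2\pi}{m}$ need not itself be Galois-stable, so the honest statement is: $\cos\frac{2\pi}{m} \in k\Gamma$ iff $\sigma$ applied to it lies in $k\Gamma^{\sigma}$, and one then reconciles this with the statement about $\mathcal{P}(\Gamma^{\sigma})$ using that the characterising condition for $\mathcal{P}(\Gamma^{\sigma})$ involves $\cos\frac{2\pi}{m}$ itself.) I expect this is the point that needs the most care.

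For condition ii), the key tool is the classification of quaternion algebras over a number field by their ramification set, recalled in \ref{quatalgnf}: a quadratic field extension $L/k\Gamma$ embeds in $A\Gamma$ if and only if no place of $k\Gamma$ in $Ram(A\Gamma)$ splits in $L$. I would apply $\sigma$ to the whole picture: $\sigma$ induces an isomorphism $k\Gamma(e^{2\pi i/m}) \to k\Gamma^{\sigma}(\sigma(e^{2\pi i/m}))$, and by Theorem \ref{mainI}(2),(3) the ramification set of $A\Gamma^{\sigma}$ is the $\sigma$-image of that of $A\Gamma$ at the finite places, while at the archimedean places both $A\Gamma$ and $A\Gamma^{\sigma}$ are ramified at exactly all places except the identity. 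Since the splitting behaviour of a place $\mathfrak{p}$ in $L$ is preserved under transporting everything by the field isomorphism $\sigma$ (i.e. $\mathfrak{p}$ splits in $L$ iff $\mathfrak{p}^{\sigma}$ splits in $L^{\sigma}$), the embedding criterion is preserved — with the proviso that I must check the archimedean places do not obstruct: the unique unramified archimedean place of $A\Gamma$ is the identity, $\cos\frac{2\pi}{m}$ is real so $k\Gamma(e^{2\pi i/m})$ is a CM field and every archimedean place of $k\Gamma$ is non-split in it, hence the ramified archimedean places impose no obstruction, and the same holds for $A\Gamma^{\sigma}$. Combining the finite-place transfer via Theorem \ref{mainI} with this archimedean remark gives that ii) holds for $A\Gamma$ iff it holds for $A\Gamma^{\sigma}$, completing the proof. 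The main obstacle, as indicated, is bookkeeping the action of $\sigma$ on the field $k\Gamma(e^{2\pi i/m})$ carefully enough that the statement "$\cos\frac{2\pi}{m}\in k\Gamma$ and $k\Gamma(e^{2\pi i/m})\hookrightarrow A\Gamma$" really does correspond under $\sigma$ to the analogous statement with the literal element $\cos\frac{2\pi}{m}$ for $\Gamma^{\sigma}$; this is where one uses that $\sigma$ permutes primitive $m$-th roots of unity and that Maclachlan's condition is symmetric in the Galois conjugates $\cos\frac{2\pi j}{m}$ over $\Q$.
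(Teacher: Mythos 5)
Your proposal is correct and follows essentially the same route as the paper: reduce to $m\geq 3$ via Maclachlan's characterisation (Proposition \ref{LemmaMR}), transfer condition i) using $k\Gamma^{\sigma}=(k\Gamma)^{\sigma}$ from Theorem \ref{mainI}, and transfer condition ii) by applying $\sigma$ to the Brauer--Hasse--Noether embedding criterion, with the finite places handled via $Ram_f(A\Gamma^{\sigma})=(Ram_f A\Gamma)^{\sigma}$ and the archimedean places posing no obstruction because $k\Gamma(e^{2\pi i/m})$ is totally imaginary over a totally real field. Your extra care with the fact that $\sigma(e^{2\pi i/m})$ is merely another primitive $m$-th root of unity (so that $\sigma(K)=k\Gamma^{\sigma}(e^{2\pi i/m})$ and $\Q(\sigma(\cos\tfrac{2\pi}{m}))=\Q(\cos\tfrac{2\pi}{m})$) is a point the paper glosses over, but it is bookkeeping rather than a different argument.
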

 \begin{proof}
 $P(A\Gamma^{+})$ always contains an element of order $2$.  So, we need to prove that if
($k\Gamma, A\Gamma)$ satisfies the  conditions i) and ii) of the above Proposition \ref{LemmaMR} then so does
$(k\Gamma^{\sigma}, A\Gamma^{\sigma})$.

This clearly holds for i) since by the 
Doi-Naganuma's theorem 
$k\Gamma^{\sigma}=(k\Gamma)^{\sigma}$. 

   In order to prove that this is also the case for ii) we first recall a criterion due to Brauer, Hasse and Noether for embedding quadratic field extensions  into   quaternion algebras  (see\cite[Theorem 7.3.3]{MR} and \cite{CF}): 
\begin{itemize}
\item A quadratic field extension $K$ of a number field $k$ can be embedded into a quaternion algebra $A$ over $k$ if and only if every archimedean or non-archimedean place of $k$ ramified in $A$ either ramifies or remains prime in $K$. 
\end{itemize} 

This applied to our situation tells us that what we need to prove is that if this condition is satisfied for the the quaternion algebra $(k,A)=(k\Gamma,A\Gamma)$ and 
the quadratic extension $K=k\Gamma(e^{2\pi i/m})$ of  $k\Gamma$ then so is for 
 the quaternion algebra $(k\Gamma^{\sigma} ,A\Gamma^{\sigma})$ and 
the quadratic extension $K^{\sigma}=k\Gamma^{\sigma}(e^{2\pi i/m})$ of  $k\Gamma^{\sigma}.$

Let us deal first with the non-archimedean places. If  $\mathfrak p \in Ram_f(A\Gamma)$ ramifies (resp. remains prime)  in $K=k\Gamma(e^{2\pi i/m})$ then    so does $\mathfrak p^{\sigma} \in Ram_f(A\Gamma)^{\sigma} = Ram_f(A\Gamma^{\sigma})$ in $K^{\sigma}.$   
This can be seen as follows.
Let $\mathfrak P$ be a prime ideal in $R_K$ lying over $\mathfrak p$ and let $\tau: K\to K$ be the non-trivial $k$-automorphism of $K$.
Then $\mathfrak p$ is ramified or remains prime in $K$ if and only if $\mathfrak P^{\tau}=\mathfrak P$. 
The non-trivial $ k^{\sigma}$-automorphism of $K^{\sigma}$ is then $\sigma^{-1}\tau\sigma$ and, clearly, $(\mathfrak P^{\sigma})^{\sigma^{-1}\tau\sigma}=\mathfrak P^{\sigma}$. Hence $\mathfrak p^{\sigma}$ also remains prime or is ramified
in $K^{\sigma}.$

As for the archimedean places we recall that
an archimedean place $v$ of a totally real number field $k$
is ramified in a Galois field extension $K/k$ if the embedding $v:k \to \mathbb{C}$ extends to an embedding $w:K\to \mathbb C$ whose image is not a subfield of the reals. In our case the fields $K=k\Gamma(e^{2\pi i/m})$  and  $K^{\sigma}=(k\Gamma)^{\sigma}(e^{2\pi i/m})$ 
are totally imaginary extensions of the totally real fields
$k\Gamma$ and  $k\Gamma^{\sigma}$
 so the ramification condition obviously holds.
  The proof is done. 
\end{proof}

When $m$ is odd these two conditions in Proposition  \ref{LemmaMR} can be formulated in the following simpler manner

\begin{proposition} \label{InvarAlgebr}
Let   $m$ be an odd number. Then $\com(\Gamma)$ contains an element of order $m$ if and only if $A\Gamma$ contains
a square root of $-\sin^2 \frac{2\pi}{m}.$
In particular $A\Gamma$ contains
a square root of $-\sin^2  \frac{2\pi}{m}$ if and only if $A\Gamma^{\sigma}$ does.
 \end{proposition}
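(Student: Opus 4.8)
The plan is to deduce everything from Maclachlan's criterion (Proposition \ref{LemmaMR}), by showing that for odd $m$ its two conditions --- (i) $\cos\frac{2\pi}{m}\in k\Gamma$ and (ii) there is a $k\Gamma$-algebra embedding $k\Gamma(e^{2\pi i/m})\hookrightarrow A\Gamma$ --- are \emph{jointly} equivalent to the single requirement that $A\Gamma$ contain an element $x$ with $x^2=-\sin^2\frac{2\pi}{m}$. The case $m=1$ is trivial (both sides hold, with $x=0$), so I would assume $m\ge 3$ and write $d=-\sin^2\frac{2\pi}{m}$ throughout.

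First I would treat the easy implication. Assume (i) and (ii) and let $\varphi\colon k\Gamma(e^{2\pi i/m})\hookrightarrow A\Gamma$ be the embedding. Since $\cos\frac{2\pi}{m}\in k\Gamma$ we have $e^{2\pi i/m}-\cos\frac{2\pi}{m}=i\sin\frac{2\pi}{m}$ inside $k\Gamma(e^{2\pi i/m})\subset\mathbb C$, an element whose square is $-\sin^2\frac{2\pi}{m}=d\in k\Gamma$ (note $\sin^2\frac{2\pi}{m}=1-\cos^2\frac{2\pi}{m}\in k\Gamma$ by (i)). Applying the $k\Gamma$-linear map $\varphi$, the element $\varphi\!\left(e^{2\pi i/m}-\cos\frac{2\pi}{m}\right)\in A\Gamma$ squares to $d$, so $A\Gamma$ contains a square root of $-\sin^2\frac{2\pi}{m}$.

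For the converse, which I expect to be the more delicate part, suppose $x\in A\Gamma$ satisfies $x^2=d$; here the statement is read inside the fixed embedding $A\Gamma\hookrightarrow M_2(\mathbb R)$ attached to the identity place of $k\Gamma$, so that $x^2=d\cdot\mathrm{Id}$ is a scalar matrix. A scalar matrix lies in the centre of $A\Gamma$, which is $k\Gamma$; hence $\sin^2\frac{2\pi}{m}=-d\in k\Gamma$, and therefore $\cos\frac{4\pi}{m}=1-2\sin^2\frac{2\pi}{m}\in k\Gamma$. At this point I would invoke the elementary number-theoretic fact that, since $m$ is odd, $2$ is a unit modulo $m$, so $\mathbb Q(\cos\frac{4\pi}{m})=\mathbb Q(\cos\frac{2\pi}{m})$ (both equal the maximal real subfield of $\mathbb Q(e^{2\pi i/m})$); consequently $\cos\frac{2\pi}{m}\in k\Gamma$, i.e. (i) holds. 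Using (i), $k\Gamma(e^{2\pi i/m})=k\Gamma\!\left(i\sin\frac{2\pi}{m}\right)$, and since $d<0$ it is not a square in the totally real field $k\Gamma$; thus both $k\Gamma(i\sin\frac{2\pi}{m})$ and $k\Gamma(x)$ are isomorphic as $k\Gamma$-algebras to $k\Gamma[t]/(t^2-d)$, so composing $k\Gamma(e^{2\pi i/m})\xrightarrow{\ \sim\ }k\Gamma(x)$ with the inclusion $k\Gamma(x)\subseteq A\Gamma$ gives the embedding required in (ii). By Proposition \ref{LemmaMR}, $\com(\Gamma)$ contains an element of order $m$, completing the equivalence.

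Finally, the last assertion is immediate: for odd $m$ we have shown that ``$m\in\mathcal P(\Gamma)$'' is equivalent to ``$A\Gamma$ contains a square root of $-\sin^2\frac{2\pi}{m}$'', so the Galois invariance $\mathcal P(\Gamma)=\mathcal P(\Gamma^{\sigma})$ proved in Theorem \ref{periods} shows that $A\Gamma$ contains such a square root if and only if $A\Gamma^{\sigma}$ does. The only genuinely non-formal inputs beyond Maclachlan's criterion are the centrality observation (that a scalar matrix in $A\Gamma$ lies in $k\Gamma$) and the cyclotomic identity $\mathbb Q(\cos\frac{4\pi}{m})=\mathbb Q(\cos\frac{2\pi}{m})$ for odd $m$, which is precisely the place where oddness of $m$ enters.
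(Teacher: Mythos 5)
Your proposal is correct and follows essentially the same route as the paper's own proof: both directions are reduced to Maclachlan's criterion via the identity $k\Gamma(e^{2\pi i/m})=k\Gamma(i\sin\frac{2\pi}{m})$, with the oddness of $m$ entering exactly through $\mathbb{Q}(\cos\frac{4\pi}{m})=\mathbb{Q}(\cos\frac{2\pi}{m})$. The only additions are small justifications the paper leaves implicit (the centrality argument placing $x^2$ in $k\Gamma$, and the observation that $d<0$ is a non-square in the totally real field $k\Gamma$), which are welcome but do not change the argument.
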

\begin{proof}
 We must show that, for $m$ odd, the above conditions i) and ii) are equivalent to the existence of an element  $X\in A\Gamma$ such that $X^2=-\sin^2 \frac{2\pi}{m}
\in k\Gamma\subset A\Gamma$.

In one direction this is easy. If the first condition is satisfied then  first of all $ \sin^2 \frac{2\pi}{m}=1-\cos^2 \frac{2\pi}{m}\in k\Gamma$  and, moreover, $k\Gamma(e^{2\pi i/m})=k\Gamma(i \sin \frac{2\pi}{m})$. Now, if in addition, there is an embedding 
$\varphi: k\Gamma(i \sin \frac{2\pi}{m}) \to  A\Gamma$ 
then $X=\varphi(i \sin \frac{2\pi}{m})$ will provide the required square root.

Conversely, if such $X$ exists then $ \cos\frac{4\pi}{m}= 1-2\sin^2 \frac{2\pi}{m} \in k\Gamma.$ 
This means that $k\Gamma$ contains the field 
$\mathbb{Q}(\cos\frac{4\pi}{m})$. But this   is precisely 
the Galois subextension of the field $\mathbb{Q}(e^{4\pi i/m})$ fixed by complex conjugation. Now, $m$ being odd, 
$\mathbb{Q}(e^{4\pi i/m})=\mathbb{Q}(e^{2\pi i/m})$, hence 
$\mathbb{Q}(\cos\frac{4\pi}{m})=\mathbb{Q}(\cos\frac{2\pi}{m})$ and so the first condition  is satisfied. This in turn implies that $k\Gamma(e^{2\pi i/m})=k\Gamma(i \sin \frac{2\pi}{m})$, as before, and now simply  sending $i \sin \frac{2\pi}{m}$ to $X$ gives an embedding of $k\Gamma(e^{2\pi i/m})$ in $A\Gamma$, which is the second condition.
\end{proof}

\begin{example} \label{Noncommens}
Let   $\Gamma_p$, $p$ odd prime, the family of groups 
constructed in  \ref{example}. Then
 $\com(\Gamma_p)$ 
contains  an element of odd prime order $q$ if and only if $q=p.$ 
%
%
%
In particular, the groups  $\com(\Gamma_p)$ and $\com(\Gamma_p)$
are not isomorphic,   if $p\neq q.$
  \end{example} 
  These claims can be settled as follows: 

 By construction (see  \ref{example}) $k\Gamma_p=\mathbb{Q}(\sin\frac{2\pi}{p}).$ Thus, by Proposition  \ref{InvarAlgebr}, in order to prove that  $\com(\Gamma_p)$ 
has an element of order $p$
it is enough to observe that $A\Gamma^p$ contains  some root of $-1$, namely $ X= {\tiny
\left( \begin{array}{cc}
0 & 1 \\
-1 & 0 \end{array} \right)}.$  

 \vspace{0.2cm}
Now suppose that $q\neq p$. Then $\sin^2 \frac{2\pi}{q}  =1-\cos^2 \frac{2\pi}{q}$ does not lie in  $\mathbb{Q}(\sin \frac{2\pi}{p} )= \mathbb{Q}(\cos \frac{2\pi}{4p})$ (see Lemma \ref{lemma1}). This is because
otherwise  $ \cos^2 \frac{2\pi}{q} $ would lie in the intersection field 
$ \mathbb{Q}(e^{2\pi i/q}) \cap \mathbb{Q}(e^{2\pi i/4p})$ which is  
equal to $\mathbb{Q}$ since $q$ and $4p$ are co-prime
(see e.g. \cite{Wa}, Proposition 2.4). Now from Proposition \ref{InvarAlgebr} we   deduce that  $\com(\Gamma_p)$ cannot contain elements of order $q.$

\subsection{Final result}\label{Finalresult}

		Although, there are only 
		finitely many   arithmetic surface groups of given  genus    (\cite{Ta2}, Theorem 2.1) 	the  group $Gal(\overline{\mathbb{Q}}/\mathbb{Q})$ is going to act faithfully on them. Our final theorem records this fact and
collects the main invariants we have found for this action.
 
	\begin{theorem} \label{AbsGalGroup}
	$Gal(\overline{\mathbb{Q}}/\mathbb{Q})$ acts faithfully on the set of  isomorphy classes of arithmetic surface groups 
	  $\Gamma$
	and this action has the following invariants:
	 \begin{enumerate}
	\item The isomorphism class of the group $\overline{\com(\Gamma)}.$ 
	\item The Galois conjugacy class of $k\Gamma$. (In fact $k\Gamma^{\sigma}=(k\Gamma)^{\sigma}$ for any $\sigma \in Gal(\overline{\mathbb{Q}}/\mathbb{Q})$).
	\item The set $\mathcal{P}(\Gamma)$ of periods of 
	$\com(\Gamma).$ 
	\item The solvability of the quadratic equations 
	\  \  $X^2+\sin^2 \frac{2\pi}{2k+1},  \ k \in \mathbb{N},$
	\newline
	in the invariant quaternion algebra $A\Gamma$. 
	\item The property of being a conguence subgroup.
  \end{enumerate}
	\end{theorem}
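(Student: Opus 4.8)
The plan is to prove Theorem \ref{AbsGalGroup} by collecting together results that have been established earlier in the paper, so that the proof becomes essentially a bookkeeping exercise; the only genuinely new ingredient is the faithfulness assertion. I would organize it as follows.

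\textbf{Faithfulness.} First I would establish that $Gal(\overline{\mathbb Q}/\mathbb Q)$ acts faithfully on the set of isomorphism classes of arithmetic surface groups. The point is that, by Proposition \ref{DefinedOverNumberField}, every algebraic curve $C$ uniformized by an arithmetic Fuchsian surface group is defined over a number field, and conversely every number field arises this way -- for instance congruence curves in the family of Example \ref{example} or in the $(2,3,7)$-example realize curves with many automorphisms and varying fields of moduli. Suppose $\sigma \in Gal(\overline{\mathbb Q}/\mathbb Q)$ acts trivially on all isomorphism classes of arithmetic surface groups. Then $C^\sigma \cong C$ for every such curve $C$; in particular the field of moduli of every such $C$ is fixed by $\sigma$. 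The hard part will be turning "$\sigma$ fixes the field of moduli of every arithmetic curve" into "$\sigma = \mathrm{id}$". The cleanest route is to observe that among arithmetic curves one finds curves whose field of moduli (indeed field of definition) is an arbitrarily large number field -- e.g. the congruence curves $C(\mathfrak q)$ of the $\Delta(2,3,7)$ example can be defined over $k=\mathbb Q(\cos\frac{2\pi}{7})$ and their Galois orbits detect the action of $Gal(k/\mathbb Q)$, and by choosing other triangle groups or the groups $\Gamma_p$ one realizes cyclotomic fields of unbounded degree. Since $\bigcup$ of all such fields of definition is dense enough in $\overline{\mathbb Q}$ (it contains, say, $\mathbb Q(\cos\frac{2\pi}{n})$ for all $n$, whose compositum is the maximal real cyclotomic field, and by using non-real quaternion invariants one picks up genuinely complex extensions as well), a $\sigma$ fixing all of them pointwise must be the identity. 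I expect this density/realization step to be the main obstacle, since it requires exhibiting enough arithmetic curves with prescribed fields of definition; an alternative is to cite Belolipetsky--type results on the richness of arithmetic surface groups, or simply to note that the faithfulness of the $Gal(\overline{\mathbb Q}/\mathbb Q)$--action on curves of a fixed genus $\geq 2$ already follows from the classical fact that it acts faithfully on $\overline{\mathbb Q}$-points of $\mathcal M_g$ and that the arithmetic locus, while finite in each genus, becomes infinite and field-of-definition-unbounded as $g\to\infty$.

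\textbf{The invariants.} Each of the five listed invariants has already been proved, so this part is just assembly. For (1), the isomorphism $\overline{\com(\Gamma)}\cong \overline{\com(\Gamma^\sigma)}$ of topological groups is exactly Theorem \ref{mainth}(1) (which in turn rests on Theorem \ref{Isom} identifying $\overline{\com(\Gamma)}$ with $\aut(\mathcal H_C)$, together with Proposition \ref{autHSigma}). For (2), the equality $k\Gamma^\sigma=(k\Gamma)^\sigma$, hence the Galois conjugacy class of $k\Gamma$ being an invariant, is part of Theorem \ref{mainI} (the Doi--Naganuma theorem). For (3), $\mathcal P(\Gamma)=\mathcal P(\Gamma^\sigma)$ is Theorem \ref{periods}. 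For (4), the statement that $A\Gamma$ contains a square root of $-\sin^2\frac{2\pi}{2k+1}$ if and only if $A\Gamma^\sigma$ does is the last sentence of Proposition \ref{InvarAlgebr}, combined again with Theorem \ref{mainI}. For (5), that being (uniformized by) a congruence subgroup is Galois invariant is Proposition \ref{CongSubInvar}. So I would simply write, for each item, one sentence pointing to the relevant earlier result, after disposing of the faithfulness claim. The only care needed is to phrase everything at the level of isomorphism classes of arithmetic surface groups rather than of curves, but since by Proposition \ref{DefinedOverNumberField} these curves are defined over $\overline{\mathbb Q}$ and the correspondence $C \leftrightarrow \Gamma$ is bijective on the relevant classes, this is harmless. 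I would close the proof immediately after listing the five citations.
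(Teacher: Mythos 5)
Your treatment of the five invariants is exactly the paper's: each item is disposed of by a one-line citation to Theorem \ref{mainth}, Theorem \ref{mainI}, Theorem \ref{periods}, Proposition \ref{InvarAlgebr} and Proposition \ref{CongSubInvar} respectively, so that part of the proposal is correct and needs no further comment.

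The genuine gap is in the faithfulness claim, and you have located it yourself without closing it. Your strategy reduces faithfulness to the assertion that the compositum of the fields of moduli of arithmetic curves is all of $\overline{\Q}$ (equivalently, that the intersection of the stabilizers $\{\sigma: C^{\sigma}\cong C\}$ over all arithmetic $C$ is trivial). The evidence you offer cannot deliver this: the fields $\Q(\cos\frac{2\pi}{n})$ are abelian and totally real, so their compositum sits inside the maximal real abelian extension of $\Q$, and a $\sigma$ fixing all of them pointwise need not be the identity (complex conjugation, or any element of the closed commutator subgroup of $Gal(\overline{\Q}/\Q)$, survives). The remark that ``non-real quaternion invariants pick up genuinely complex extensions'' is not an argument -- the invariant trace fields of arithmetic Fuchsian groups are always totally real, and nothing in the paper produces arithmetic curves whose fields of moduli generate $\overline{\Q}$. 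Your fallback, faithfulness of the action on $\mathcal M_g(\overline{\Q})$, concerns \emph{all} curves of genus $g$ and says nothing about the finite arithmetic locus inside it. The paper closes this gap by an entirely different route: it cites the theorem of \cite{GJ} that $Gal(\overline{\Q}/\Q)$ acts faithfully on the set of curves uniformised by finite-index subgroups of any \emph{fixed} triangle group, combined with Takeuchi's result \cite{Ta1} that arithmetic triangle groups exist; since every surface subgroup of an arithmetic triangle group is an arithmetic surface group, faithfulness on the class in question follows immediately. To complete your proof you would need to import that result (or something equivalent to it); the density argument you sketch does not stand on its own.
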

	\begin{proof}
	
	That the action transforms arithmetically uniformised  curves into themselves is the content of Corollary \ref{kaz}. Faithfulness  is a consequence of the result proved in \cite{GJ} that the action is faithful on the set of curves uniformised by subgroups of any given triangle group together with the fact that there are plenty of triangle groups which are arithmetic \cite{Ta1}.
	
	As for the three listed invariants, 1)   is the first part of Theorem \ref{mainth},  2) is the first part of Theorem \ref{mainI}, 
	3)  is Theorem \ref{periods},    4)  is Proposition  \ref{InvarAlgebr} and 5) is Proposition \ref{CongSubInvar}.
	\end{proof}

 \textbf{Acknowledgements.} The authors   would like to thank 
 Adri\'an Ubis for the explicit choice of the constant $b_p$ in \ref{example}, J\"urgen Wolfart   for many valuable suggestions and Andrei Jaikin-Zapirain  for his helpful comments at an early stage of this paper.

\end{document}